\documentclass[12pt]{amsart}
\usepackage{amssymb}
\newcommand{\gl}{\mathfrak{gl}}
\newcommand{\g}{\mathfrak{g}}
\newcommand{\h}{\mathfrak{h}}
\newcommand{\C}{\mathbb{C}}

\newcommand{\OCat}{\mathcal{O}}
\newcommand{\p}{\mathfrak{p}}
\newcommand{\Hecke}{\mathcal{H}}
\newcommand{\Cat}{\mathcal{C}}
\newcommand{\bs}{\underline{s}}
\newcommand{\param}{\mathfrak{P}}
\renewcommand{\sl}{\mathfrak{sl}}
\newcommand{\Ext}{\operatorname{Ext}}

\newcommand{\Z}{\mathbb{Z}}
\newcommand{\cont}{\operatorname{cont}}
\newcommand{\dotimes}{\dot{\otimes}}
\newcommand{\End}{\operatorname{End}}
\newcommand{\Hom}{\operatorname{Hom}}
\newcommand{\wt}{\operatorname{wt}}
\newcommand{\Par}{\mathcal{P}}
\newcommand{\Ind}{\operatorname{Ind}}
\newcommand{\Res}{\operatorname{Res}}
\unitlength=1mm   \numberwithin{equation}{section}
\newtheorem{Thm}{Theorem}[section]
\newtheorem{Prop}[Thm]{Proposition}
\newtheorem{Cor}[Thm]{Corollary}
\newtheorem{Lem}[Thm]{Lemma}
\theoremstyle{definition}

\newtheorem{defi}[Thm]{Definition}
\newtheorem{Rem}[Thm]{Remark}
\newtheorem{Conj}[Thm]{Conjecture}

\oddsidemargin=0cm \evensidemargin=0cm\textwidth=16.5cm
\topmargin=-1.2cm \textheight=232mm
\title{Proof of Varagnolo-Vasserot conjecture on cyclotomic categories $\mathcal{O}$}
\author{Ivan Losev}
\address{Department
of Mathematics, Northeastern University, Boston MA 02115 USA}
\email{i.loseu@neu.edu}
\thanks{MSC 2010: 16G99}
\begin{document}
\begin{abstract}
We prove an asymptotic version of a conjecture by Varagnolo and Vasserot on an equivalence
between the category $\mathcal{O}$ for a cyclotomic Rational Cherednik algebra and a suitable
truncation of an affine parabolic category $\mathcal{O}$ that, in particular, implies  Rouquier's
conjecture on the decomposition numbers in the former. Our proof uses two ingredients:
an extension of Rouquier's deformation approach as well as categorical actions on highest weight
categories and related combinatorics.
\end{abstract}
\maketitle
\tableofcontents
\section{Introduction}
Rational Cherednik algebras were introduced by Etingof and Ginzburg, \cite{EG}. These are associative algebras over $\C$
constructed from a complex reflection group, say $W$, and depending on a parameter, say $p$, that is a collection
of complex numbers. They have many things in common
with the universal enveloping algebras of semisimple Lie algebras, in particular, they have a triangular decomposition.
This allows one to define the categories $\mathcal{O}$ for such algebras, this was done in \cite{GGOR}. There are analogs
of Verma modules, parameterized by irreducible $W$-modules,
and an ordering on the set of simples in the GGOR category $\mathcal{O}$ making it into
a highest weight category. So there is a basic question one can ask: compute the multiplicity of
a given simple module in a given standard (=Verma) module.

The nicest and, perhaps, most important family of complex reflection groups is $W=G(\ell,1,n):=S_n\ltimes (\Z/\ell\Z)^n$,
where $n,\ell$ are positive integers. This group acts on $\C^n$ by permutations of coordinates followed by multiplications by roots of $1$ of order $\ell$. There are more general infinite families, the groups $G(\ell,r,n)$, where $r$ is a divisor
of $\ell$, but the study of the corresponding categories $\mathcal{O}$ can be, to some extent, reduced
to the case of $G(\ell,1,n)$ and this is one of the reasons why our case is important. Another reason is that
the corresponding category has an additional interesting structure that is not present in the other cases,
a categorical Kac-Moody action to be recalled below. Yet another reason is a connection to the geometry of
symplectic resolutions of quotient singularities.

A significant progress in determining the multiplicities was made by Rouquier in \cite{rouqqsch}, where he computed
the multiplicities in the case $\ell=1$ and made a conjecture for  all $\ell$ (the conjecture
was made for some special, but, in a sense, the most interesting and ``non-degenerate'' values of $p$).
The conjecture says that the multiplicities are given by certain parabolic Kazhdan-Lusztig polynomials.
The techniques used in the proof for $\ell=1$ were roughly as
follows. In \cite{GGOR} the authors introduced a so called KZ functor  from the Cherednik
category $\mathcal{O}$ to the category of modules over the Hecke algebra  $\mathcal{H}$ of $W$ with parameters
recovered from $p$. This is a quotient functor. Rouquier developed techniques that allow
to check when two highest weight categories  admitting quotient functors to $\mathcal{H}$-$\operatorname{mod}$
are equivalent. For $\ell=1$ there is another category with a nice quotient functor, the category
of modules over an appropriate $q$-Schur algebra that was shown to be equivalent to the Cherednik category
$\mathcal{O}$ (under a certain ``faithfulness'' condition on the parameters).

For $\ell>1$, the situation is more complicated. For certain, so to say, ``dominant'' and ``faithful'', values of $p$
Rouquier proved in \cite{rouqqsch} that the Cherednik category $\mathcal{O}$ is equivalent to the category
of modules over a suitable cyclotomic $q$-Schur algebra of Dipper, James and Mathas. The multiplicities
for the latter categories were recently computed by Stroppel and Webster, \cite{SW}.

On the other hand, Varagnolo and Vasserot in \cite{VV} produced another category, where the multiplicities
were shown to be as required by the Rouquier conjecture.
Their category is a certain truncation of an affine parabolic
category $\mathcal{O}$. They conjectured an equivalence of that category with the
Cherednik category $\mathcal{O}$.

The goal of this paper is to prove that conjecture (in a somewhat weaker form that is still
sufficient for checking the Rouquier conjecture). Together with earlier results of Shan, Varagnolo
and Vasserot, our result also implies a conjecture of Chuang
and Miyashi, \cite{CM}, claiming that the Cherednik category $\mathcal{O}$ is Koszul
and describing the Koszul dual.

\subsection{Ideas of proof}
Our proof of the Varagnolo-Vasserot conjecture  uses two groups of ideas. First, we use deformation ideas
initially due to Rouquier, \cite{rouqqsch}, with  further extensions. Some of them are due to Rouquier, Shan, Varagnolo,
Vasserot and some are to be developed in the present paper. Second, to properly implement these ideas we need
categorical actions on highest weight categories, a topic initiated by the author in \cite{cryst},\cite{str}
and further developed by the author and Webster in \cite{LW} and in the present paper.

Let us describe the deformation ideas. The GGOR category $\mathcal{O}$ admits a quotient functor
(the KZ functor of \cite{GGOR} to be reviewed in Section \ref{SS_KZ}) to the category of modules
over a cyclotomic quotient of the affine Hecke
algebra. This functor is fully faithful on certain subcategories:
for example, on the categories of tilting
and of projective objects, \cite[Theorem 5.3]{GGOR}.
Also it is fully faithful on the whole category of standardly filtered objects
({\it $0$-faithful} in Rouquier's terminology) under some restrictions on the parameters for the Cherednik
algebra, see \cite[Proposition 5.9]{GGOR}. As Rouquier checked in \cite[Proposition 4.42]{rouqqsch}
this implies that, after a generic
one-parameter deformation of the categories
of interest, the KZ functor becomes {\it 1-faithful} (i.e., an isomorphism on $\Hom$ and $\Ext^1$
between standardly filtered objects). Two highest weight
categories over $\C[[\hbar]]$  with quotient functors to the same category are equivalent provided their orders
are the same, the quotient functors are 1-faithful and are equivalences over $\C((\hbar))$,
\cite[Theorem 4.49]{rouqqsch}. So the problem is to establish an analog of the KZ functor for a truncated affine parabolic category $\mathcal{O}$. To produce a functor is not
difficult, this is done using categorical Kac-Moody actions, we define a projective
object representing the functor in Proposition \ref{Prop:FnDelta} below.
What is much harder is to prove faithfulness properties.
Recently, Rouquier, Shan, Varagnolo and Vasserot proposed to consider 2-parametric deformations and announced
that  0-faithfulness in points of codimension 1 yields 1-faithfulness for the deformed categories.
Considering 2-parametric deformations is one extension of the original technique of Rouquier that we will use.
We will see, Theorem \ref{Thm:Thm_equi} and Proposition \ref{Prop:Cher_proj_emb},
that it is enough to show that the quotient functor from the deformed affine category $\mathcal{O}$
is only 0-faithful. The 0-faithfulness condition follows from checking (-1)-faithfulness in
codimension $1$, Proposition \ref{Prop:faith}.

There is one  more significant extension of Rouquier's technique that we use.
We bypass the problem that sometimes the quotient functors to the cyclotomic
Hecke categories are  not 0-faithful by considering larger quotients described in
Section \ref{SS_ext_quot_setting}. The main result of that section is that
the larger quotients of the truncated affine category and of the Cherednik category
are equivalent. Modulo checking the faithfulness properties of the quotient
functor from the affine category $\mathcal{O}$, this yields a proof of the
Varagnolo-Vasserot conjecture.

Let us explain how the theory of categorical actions on highest weight categories comes into play.
Results of Rouquier, \cite{Rouquier_2Kac}, see, in particular, Corollary 5.7 there,
suggest a way to produce a quotient functor to
a cyclotomic Hecke category (i.e., the direct sum over all $n$ of the module categories over cyclotomic
Hecke algebra with fixed parameters and $n$ variables)
from some category $\mathcal{C}$. Namely, one gets such a functor if $\mathcal{C}$
is equipped with a categorical action
of $\hat{\sl}_e$ that categorifies an integrable   $\hat{\sl}_e$-module with weights
bounded from above. There is a categorical
action on the affine parabolic category $\mathcal{O}$ before the truncation: this is provided
by the Kazhdan-Lusztig tensor products. This action does not restrict to the truncated category
in a straightforward way (as the truncated category is not stable under the categorification functors).
However one can still define a (``restricted'', but this is not of importance) categorical action
on the truncated category using the categorical splitting techniques from \cite{str}, this will be
done in Section \ref{SS_trunc_cat}. This produces a required quotient functor,
Section \ref{SS_proj}. Further,  using structural results
obtained in \cite{str}, one can reduce the study of the faithfulness properties for this functor to some purely combinatorial questions concerning  crystal structures on the multipartitions. More precisely, there is a combinatorial condition that guarantees $(-1)$-faithfulness  of the quotient functor, see Section \ref{SS_comb_to_-1faith}.
The combinatorial condition is already sufficient to checking the (-1)-faithfulness in codimension $1$
for the affine parabolic category. This completes the proof of the Varagnolo-Vasserot conjecture.


\begin{Rem}
We want to indicate the dependence of the present paper on a related work. We  use an idea  due to
Rouquier, Shan, Varagnolo and Vasserot explained before.  This idea was mentioned in Shan's talk
in Luminy in July 2012 without explanations on how to make it to work, and the paper, \cite{RSVV},
appeared when our paper was ready.
There is also a related work of Webster, \cite{Webster_new}, where he
proves an equivalence between the GGOR category and a certain diagrammatic category.
\end{Rem}

\begin{Rem}
The version of this paper that appeared in 2013 had a serious gap. Presumably, the gap can be fixed
using Zuckerman functors for affine parabolic categories $\mathcal{O}$, however the fix is by no means
easy. In July 2015, we have discovered Theorem \ref{Thm:Thm_equi} that allows to significantly simplify
the original proof.
\end{Rem}

\subsection{Structure of the paper}
In  Section \ref{S_cat} we describe the highest weight categories we consider: the categories $\mathcal{O}$
for cyclotomic Rational Cherednik algebra and affine parabolic categories $\mathcal{O}$ both in the undeformed
and deformed settings. We also recall basic combinatorics of these categories. This section contains no new results.

In Section \ref{S_faith} we provide general results on faithfulness properties of quotient functors from highest
weight categories. The main results of this section are Proposition \ref{Prop:faith}
(that is a version of \cite[Proposition 4.42]{rouqqsch}), Theorem \ref{Thm:Thm_equi} and Proposition \ref{Prop:Cher_proj_emb}.

Section \ref{S_cat_KM} deals with categorical Kac-Moody actions on highest weight categories. It defines categorical
type A Kac-Moody actions and recalls results from \cite{cryst},\cite{str}. There are no new results
there.

Section \ref{S_trunc_cat} is new. There we equip the truncated affine parabolic category $\mathcal{O}$
with a restricted type A categorical Kac-Moody action and so produce a functor to the cyclotomic Hecke
category.

In Section \ref{S_faith_comb} we study an interplay between the faithfulness properties of quotient morphisms
and combinatorial properties of crystals. Namely, we state a combinatorial condition that guarantees  vanishing
of $\operatorname{Hom}$  from a suitable simple to a suitable tilting.  Finally, we check that our combinatorial
condition holds in a certain special case.


Section \ref{S_C_cat_equi} we define new quotient functors that are ``larger'' than the functors considered before
(our old functors factor through new ones). Then we show that the target categories for our new functors
in the GGOR and in the parabolic setting are equivalent.


Finally, in the last section of this paper, we complete the proof of the main equivalence theorem
that yields an asymptotic version of the Varagnolo-Vasserot conjecture.

The paper contains an appendix that provides an independent proof for $\ell=1$.

{\bf Acknowledgements}. My research was supported by the NSF grants DMS-0900907, DMS-1161584.
This paper would not have appeared without numerous conversations with
R. Bezrukavnikov, I. Gordon, B. Webster. I am very grateful to them. I also would like to thank
J. Brundan, D. Gaitsgory,  P. Etingof, P. Shan for useful discussions. Special thanks
are to E. Vasserot for a stimulating e-mail correspondence. Finally, I would like to thank
the referee for the many comments that helped me to improve the exposition.

\section{Categories of interest}\label{S_cat}
\subsection{Poset of multipartitions}\label{SS_part_poset}
Let $\ell$ be a positive integer.  We consider the set $\mathcal{P}_\ell$ of $\ell$-multipartitions, i.e., $\ell$-tuples $(\lambda^{(1)},\ldots,\lambda^{(\ell)})$, where $\lambda^{(i)}$ is a partition. We write $|\lambda|$ for the number partitioned by $\lambda$.

A partition can be thought as a Young diagram -- a shape on the coordinate  plane consisting of unit square  boxes.
The diagram corresponding to a partition $\mu$, by definition, consists of squares whose top right corner
has coordinates $(y,x)$ with $0\leqslant y \leqslant \mu_x$. So a box in a multipartition $\lambda$
is given by a triple $(x,y,i)$, where $i=1,\ldots,\ell$ is the number of a multipartition, where the box occurs, and $(x,y)$ are its coordinates: $x$ is the row number, and $y$ is the column number.

We are going to equip $\mathcal{P}_\ell$ with a partial order. This partial order will depend on an integer
$e>1$ and an $\ell$-tuple of integers (a multi-charge) $(s_1,\ldots,s_{\ell})$.
To a box $b=(x,y,i)$ we assign its {\it shifted content} $\cont(b)=y-x+s_i$.

We say that boxes $b,b'$ are equivalent and write $b\sim b'$
if $\cont(b)-\cont(b')$ is divisible by $e$. Also to a box $b=(x,y,i)$ we assign the number
$d(b)= -\frac{\ell}{e} \cont(b)-i$. We write $b\preceq b'$ if $b\sim b'$ and $d(b)- d(b')$
is a non-negative integer. Equivalently, $b\preceq b'$ if $\cont(b')-\cont(b)\in e \Z_{>0}$ or
$\cont(b)=\cont(b')$ and $i<i'$. For two $\lambda,\mu\in \mathcal{P}_{\ell}$ we write $\lambda\preceq \mu$ if $|\lambda|=|\mu|$ and we can number boxes $b_1,\ldots,b_n$ of $\lambda$ and $b_1',\ldots,b_n'$ of $\mu$ in such a way that $b_i\preceq b_i'$ for all $i$. It is not difficult to see that $\lambda\preceq \mu, \mu\preceq \lambda$ actually implies that $\lambda=\mu$.

\subsection{GGOR category $\mathcal{O}$}\label{SS_Cher_O}
Let $\ell,n$ be  positive integers. Consider the finite group $G_n:=\mathfrak{S}_n\ltimes (\Z/\ell\Z)^n$. Let $V$
be its reflection representation (of dimension $n$ for $\ell>1$ and of dimension $n-1$ for $\ell=1$).
Let $\kappa$ be a complex number and $\underline{s}=(s_1,\ldots,s_{\ell})$ be a collection of complex numbers
defined up to a common summand. The rational Cherednik algebra $H_{\kappa,\bs}(n)$ is the quotient
of $T(V\oplus V^*)\rtimes G_n$ by the relations of the form $[x,x']=0, [y,y']=0, [y,x]=w_{x,y}$ for $x,x'\in V^*,
y,y'\in V^*$, where $w_{x,y}$ is an element of $\C G_n$ depending linearly on $x,y$ and
$\kappa, \kappa s_i-\frac{i}{\ell}$ for $ i=1,\ldots,\ell$. The reader is referred to, say \cite[Section 1.2]{VV} for the particular form of the relations. What is important for us is that there is a triangular decomposition $H_{\kappa,\bs}(n)=S(V)\otimes \C G_n\otimes S(V^*)$.

We will consider the category $\mathcal{O}_{\kappa,\bs}(n)$ of $H_{\kappa,\bs}(n)$-modules introduced in
\cite[Section 3.2]{GGOR}. By definition, it consists of all $H_{\kappa,\bs}(n)$-modules that are finitely generated
over $S(V^*)$ and where the action of $V\subset H_{\kappa,\bs}(n)$ is locally nilpotent. This category
has analogs of Verma modules: $\Delta(E)=H_{\kappa,\bs}(n)\otimes_{S(V)\rtimes G_n}E$, where $E$
is an irreducible $G_n$-module. There is a natural identification of the set of irreducible $G_n$-modules
with the set of $\ell$-multipartitions of $n$. Our convention here is almost like in \cite[Section 3.5]{cryst} (with the index $0$
replaced by $\ell$). Consider the direct sum $\mathcal{O}_{\kappa,\bs}:=\bigoplus_{n=0}^{+\infty} \mathcal{O}_{\kappa,\bs}(n)$. This is a highest weight category with poset $\mathcal{P}_\ell$ introduced in Section \ref{SS_part_poset}, the standard objects are $\Delta(\lambda)$. The claim that $\mathcal{O}_{\kappa,\bs}$ is highest weight with respect to a finer (c-function) ordering was already in \cite[Section 3.1]{GGOR}. The claim that the coarser ordering also works follows from  \cite{Griffeth}, see also the proof of \cite[Theorem 1.2]{DG}.
For reader's convenience let us recall the definition of a highest weight category.

An artinian abelian category $\Cat$ equipped with a collection of objects $\Delta(\lambda)$ indexed with  elements
of a poset $\Lambda$ is said to be {\it highest weight} if the following axioms hold.
\begin{itemize}
\item[(HW1)] If $\Hom_{\Cat}(\Delta(\lambda),\Delta(\mu))=0$, then $\lambda\leqslant\mu$, and $\End_{\Cat}(\Delta(\lambda))=\C$.
Moreover, the heads $L(\lambda)$ of $\Delta(\lambda)$ are simple and form a complete list of simple objects in
$\Cat$.
\item[(HW2)] For each $\lambda\in \Lambda$  there is an indecomposable projective object $P(\lambda)$
equipped with a filtration $P(\lambda)=F_0\supset F_1\supset F_2\ldots$
such that $F_0/F_1=\Delta(\lambda)$ and $F_i/F_{i+1}=\Delta(\lambda_i)$ with $\lambda_i>\lambda$ for all $i>0$.
\end{itemize}

We will need a deformation of $\OCat_{\kappa,{\bs}}$. Let $\tilde{\param}$ be the  space $\{(x_0,x_1,\ldots,x_\ell)\}/\{(0,t,t\ldots,t)\}$,  the space of parameters for the Cherednik
algebra. Let $p\in \tilde{\param}$ be the point with coordinates $(\kappa,s_1,\ldots,s_\ell)$. For $\ell>1$, we pick a general 2-dimensional affine subspace $\param$ through $p$ and consider the completion $R:=\C[\param]^{\wedge_p}$
of $\C[\param]$ at $p$.
Then we can form the algebra $H_{\kappa,\bs,R}(n)$ that is the quotient of $T(V\oplus V^*)\rtimes G_n\otimes R$ by the relations corresponding to $(x_0,\ldots,x_\ell)$. We can still define the category $\mathcal{O}_{\kappa, \bs, R}$ in the same way as above. This is an integral (over $R$) highest weight category in the sense of Rouquier, \cite[Section 4.1]{rouqqsch}. For $\ell=1$, we take $\param:=\tilde{\param}$ and define $R, \OCat_{\kappa,R}$, etc., in an analogous
way.

\subsection{Poset of parabolic highest weights}\label{SS_parab_poset}
We fix integers $e>1$ and $\underline{s}:=(s_1,\ldots,s_{\ell})$ with $s_i\geqslant 0$. Set $m:=s_1+\ldots+s_{\ell}$.

Let $\Z^{\underline{s}}$ stand for the set of all $m$-tuples $(a_1,\ldots,a_m)$ of integers such that
$a_1>a_2>\ldots>a_{s_1}, a_{s_1+1}>\ldots>a_{s_1+s_2},\ldots, a_{s_1+\ldots+s_{\ell-1}+1}>\ldots>a_m$.
We are going to equip $\Z^{\underline{s}}$ with two poset structures, one refining the other.

Our coarser poset structure comes from the linkage ordering on a parabolic affine category $\mathcal{O}$
to be considered later. Set $\g:=\gl_m$. Then we can form the affine algebra $\hat{\g}=\g[t^{\pm 1}]\oplus \C c$
and the extended affine Lie algebra $\tilde{\g}=\hat{\g}\oplus \C d$. Let $\h$ denote the Cartan subalgebra of $\g$ consisting of the diagonal matrices and $\hat{\h}:=\h\oplus C c, \tilde{\h}:=\hat{\h}\oplus \C d$. Let $\epsilon_1,\ldots,\epsilon_m$ be a natural basis of $\h^*$ corresponding to the matrix units.
Further, let $\delta\in \tilde{\h}^*$ denote the indecomposable positive imaginary root.

The Weyl group of $\tilde{\g}$, that is, the affine
symmetric group $\hat{\mathfrak{S}}_m=\mathfrak{S}_m\ltimes Q$, where $Q$ is the root lattice of $\g$,
acts naturally on $\Z^{m}$. In particularly, for a real root $\beta=\epsilon_i-\epsilon_j+ n\delta, i\neq j,$ we have $\sigma_\beta (a_1,\ldots,a_m)= (a_1',\ldots,a_m')$, where $a_k'=a_k$ if $k\neq i,j$, $a_i'=a_j+en, a_j'=a_i-en$.
Note that we can embed $\Z^m$ into the weight lattice for $\tilde{\g}$ by
$$A\mapsto \alpha_A:=\sum_{i=1}^m a_i \epsilon_i - e\omega_0+\frac{a_1^2+a_2^2+\ldots+a_m^2}{2e}\delta,$$
where $\omega_0$ is the fundamental weight corresponding to the simple root $\alpha_0=\epsilon_m-\epsilon_0+\delta$.
The map $A\mapsto \alpha_A$ is $\hat{\mathfrak{S}}_m$-equivariant.

We say that an element $A=(a_1,\ldots,a_m)$ is $\bs$-regular if the numbers $a_1,\ldots,a_{s_1}$
are pairwise different, the numbers $a_{s_1+1},\ldots,a_{s_1+s_2}$ are pairwise different, etc.
For an $\bs$-regular element $A$ let $A_+$ denote a unique element  of $\Z^{\bs}$ that is obtained from $A$ by applying a permutation from $\mathfrak{S}_{s_1}\times \mathfrak{S}_{s_2}\times\ldots\times \mathfrak{S}_{s_\ell}\subset \mathfrak{S}_m$. We say that $A>A'$ if there are elements $A_0=A, A_1,\ldots,A_{k-1},A_k=A'$ such that $A_{i}=(\sigma_{\beta_i}A_{i-1})_+$ for some real root $\beta_i$ and $\alpha_{A_{i-1}}-\alpha_{A_i}$ is a nonzero linear combination of positive affine roots with nonzero coefficients. Below we will need an easy lemma describing some properties of  this  ordering.

\begin{Lem}\label{Lem:ordering}
Let $A\in \Z^{\bs}$ and $\beta$ be a positive real root, $\beta=\epsilon_i-\epsilon_j+ n\delta$, where $n\geqslant 0$
if $i<j$ and $n>0$ if $i>j$. Suppose that $(\sigma_{\beta}A)_+<A$. Then $a_i-a_j-n e>0$.
\end{Lem}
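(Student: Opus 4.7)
The plan is to expand $A-(\sigma_\beta A)_+$ as a combination of affine roots and extract $c:=a_i-a_j-en$ as the $\delta$-component of the leading term. Although the explicit formula in the text only records the action of $\sigma_\beta$ on the $\epsilon$-coordinates, the associated affine weight $\mu_A$ (at level $-e$) must transform by the standard rule $\sigma_\beta \mu_A = \mu_A - \langle \mu_A,\beta^\vee\rangle\beta$. Matching the $\epsilon_i$-entry with the given formula pins down $\langle \mu_A,\beta^\vee\rangle = c$, so $A - \sigma_\beta A = c\beta$; in particular its $\delta$-component equals $cn$.

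Second, I would control the parabolic rearrangement. Setting $W_L:=\mathfrak{S}_{s_1}\times\cdots\times\mathfrak{S}_{s_\ell}$, the element $(\sigma_\beta A)_+$ is the unique $W_L$-dominant point in the orbit $W_L\cdot\sigma_\beta A$. Since $\mu - w\mu$ is a non-negative combination of positive $L$-roots whenever $\mu$ is $L$-dominant and $w\in W_L$, one obtains
\[
A - (\sigma_\beta A)_+ \;=\; c\beta \;-\; \sum_{\gamma\in\Phi_L^+} e_\gamma\,\gamma, \qquad e_\gamma\ge 0,
\]
where each Levi root $\gamma$ carries zero $\delta$-component.

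By hypothesis the left side is a nonzero non-negative combination of positive affine roots. When $n>0$, its $\delta$-component equals $cn\ge 0$, so $c\ge 0$; and $c=0$ would force the left side to be simultaneously a non-positive combination of positive finite (Levi) roots and a non-negative combination of positive affine roots, hence zero, contradicting strictness of $(\sigma_\beta A)_+<A$. When $n=0$ we necessarily have $i<j$; the sub-case of $i,j$ in the same block is ruled out by a direct check that $(\sigma_\beta A)_+=A$, so $i,j$ lie in different blocks with the block of $i$ preceding that of $j$. Choosing $k=s_1+\cdots+s_p$ to separate these two blocks and pairing with $\omega_k:=\epsilon_1+\cdots+\epsilon_k$ in the standard form, the vanishing of the $\delta$-component forces the positive-affine expansion to use only positive finite roots (so the pairing is $\ge 0$), while the pairing evaluates to $c$ since every Levi root has both endpoints in a single block and contributes zero. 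Either way $c>0$.

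The main obstacle is the first step: the formula in the text only displays the action of $\sigma_\beta$ on the tuple $(a_1,\ldots,a_m)$, but the argument crucially requires the $\delta$-component of $A-\sigma_\beta A$, which is hidden in the underlying Kac--Moody reflection rather than visible in the coordinates. Without this $\delta$-shift the case $n>0$, $i>j$ would look paradoxical; once it is isolated, the $\delta$-component alone carries essentially all of the information needed.
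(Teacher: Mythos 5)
Your proof is correct and follows essentially the same strategy as the paper's. The paper introduces the affine weight $\alpha_A=\sum a_i\epsilon_i-e\omega_0+\tfrac{\sum a_i^2}{2e}\delta$, so that $\alpha_A-\sigma_\beta\alpha_A=(\alpha_A,\beta)\beta=c\beta$ and the parabolic reordering contributes only Levi simple roots; it then concludes $c>0$ from the single observation that $\beta$ does not lie in the Levi span, without a case split. You identified exactly the same decomposition (reflection part $c\beta$ plus a non-negative Levi-root combination), and you realized the need to pass to the genuine affine weight so as to see the $\delta$-component; your only departure is that you make the final separation argument concrete by applying specific functionals — the $\delta$-coefficient when $n>0$, the fundamental coweight $\omega_k$ when $n=0$ — each of which kills Levi roots, is non-negative on positive affine roots, and is positive on $\beta$. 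This is a valid and slightly more hands-on way of expressing what the paper dispatches with the observation ``$\beta$ does not lie in the span of the Levi simple roots.'' One small simplification available to you in the boundary case: if $c=0$ then $\sigma_\beta\alpha_A=\alpha_A$, so $\sigma_\beta A=A$ and $(\sigma_\beta A)_+=A$ immediately, contradicting the hypothesis; this is shorter than the ``simultaneously non-positive and non-negative, hence zero'' argument you give.
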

\begin{proof}
We remark  that $\alpha_{(\sigma_\beta A)_+}=w \alpha_{\sigma_\beta A}$, where $w$ is some uniquely determined element in $\mathfrak{S}_{s_1}\times\ldots\times \mathfrak{S}_{s_\ell}$. The inequality $(\sigma_\beta A)_+<A$ just means that $\alpha_A-w \sigma_\beta \alpha_A$ is a combination of simple roots with non-negative integral coefficients (one of the coefficients should be strictly positive). We have $a_i-a_j-en=(\alpha_A,\beta)$.

Clearly,  $\alpha_A- w \sigma_\beta \alpha_A=(\alpha_A-\sigma_\beta \alpha_A)+(\sigma_\beta \alpha_A-w \sigma_\beta \alpha_A)$. The second summand is a combination of the simple roots of the Levi subalgebra $\gl_{s_1}\times\ldots\times \gl_{s_\ell}$. But $\beta$ does not lie in the span of those (otherwise $(\sigma_\beta A)_+=A$).
So if $(\sigma_\beta A)_+<A$, then the coefficient of $\beta$ in the first summand is positive, i.e.,
$a_i-a_j-en>0$.
\end{proof}

We are going to refine the ordering above. For this we will describe elements of $\Z^{\underline{s}}$ in a different
way -- as {\it virtual multipartitions}.

We will represent an element $A$ by an $\ell$-tuple of diagrams of a certain form of  that will be called {\it virtual Young diagrams}. Given a collection $\mu_1\geqslant\ldots\geqslant\mu_{s_i}$ of integers consider the shapes consisting of all unit squares with coordinates $(y,x)$ with $y\leqslant \mu_x$. Such a shape will be called a virtual
Young diagram. Unlike a usual Young diagram, a virtual one is infinite to the left but
still the rightmost positions of a box in a row increase from top to bottom.

Consider the element $A_\emptyset=(s_1,\ldots,1,s_2,\ldots,1,\ldots,s_{\ell},\ldots,1)$. We can view $A-A_\emptyset$
as a collection of $\ell$ virtual Young diagrams -- a {\it virtual  multi-partition}. So $\Z^{\bs}$ is in bijection with the set of all virtual multipartitions $(\mu^{(1)},\ldots,\mu^{(\ell)})$ such that $\mu^{(i)}$ consists precisely of $s_i$ rows.

Now we can introduce an ordering on $\Z^{\bs}$ similarly to the ordering on $\mathcal{P}_{\ell}$ from the previous subsection. Namely, given virtual multipartitions $\lambda,\mu\in {\Z}^{\underline{s}}$ we say that $\lambda\preceq \mu$
if we have orderings $(b_1,b_2,\ldots)$ and $(b_1',b_2',\ldots)$ of boxes in $\lambda$ and $\mu$,
respectively, such that $b_i\leqslant b_i'$ for every $i$.  We remark that there is an integer $k$ such that the parts of $\lambda,\mu$ lying to the left of the $k$th column (in all $\ell$ diagrams) coincide. So we actually have $b_i=b_i'$ for all $i$ but finitely many.

\begin{Lem}\label{Lem:ord_compat}
The partial order $\preceq$ refines  $\leq$. That is, $\lambda\leq \mu$ implies $\lambda\preceq\mu$.
\end{Lem}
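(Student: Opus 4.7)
The plan is to proceed by induction on the length of a chain realising $\lambda \le \mu$, reducing to the single-step case $\lambda = (\sigma_\beta \mu)_+$ where $\alpha_\mu - \alpha_\lambda$ is a nonzero non-negative integral combination of positive simple affine roots. Writing $\mu = (a_1,\ldots,a_m)$ and $\beta = \epsilon_p - \epsilon_q + n\delta$, Lemma \ref{Lem:ordering} then supplies the crucial inequality $a_p - a_q - en > 0$. I will also record at the outset that when $n = 0$ the positions $p$ and $q$ must lie in distinct $\bs$-blocks, since otherwise the reordering $(\sigma_\beta \mu)_+$ simply recovers $\mu$.

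To describe the change from $\mu$ to $\lambda$ at the level of virtual Young diagrams, I would encode block $i$ of $\mu$ by the multiset $S_i(\mu) = \{a_{k_{i-1}+x} - 1 : 1 \le x \le s_i\}$, whose decreasing sort gives the sequence of maximum row-contents of block $i$. Passing from $\mu$ to $\lambda$ replaces $a_p - 1$ by $a_q + en - 1$ in $S_i(\mu)$ and symmetrically $a_q - 1$ by $a_p - en - 1$ in $S_j(\mu)$, where $j$ is the block of $q$. A telescoping computation over the rows shuffled by the re-sort shows that, no matter how many rows in a block are touched, the boxes of $\mu$ lying in block $i$ but not in $\lambda$ have contents forming the contiguous integer interval $\{a_q + en, a_q + en + 1, \ldots, a_p - 1\}$, with each content arising exactly once; symmetrically, the boxes of $\lambda$ in block $j$ but not in $\mu$ have contents $\{a_q, a_q+1, \ldots, a_p - en - 1\}$. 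Both multisets have cardinality $a_p - a_q - en$.

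I would then define the required bijection between boxes of $\lambda$ and boxes of $\mu$ by taking the identity on the cofinite common part, and on the symmetric difference sending the box of $\lambda$ of content $c$ in block $j$ to the box of $\mu$ of content $c + en$ in block $i$. Verifying the $\preceq$-condition on each matched pair reduces to a direct comparison via the box order: when $n > 0$ the content difference $en$ lies in $e\Z_{>0}$, so the content-clause in the definition of $\preceq$ applies immediately and gives the desired inequality, while when $n = 0$ the contents coincide, so the block-index clause takes over, with the positivity of $\beta$ together with Lemma \ref{Lem:ordering} constraining the blocks of $p$ and $q$ to be ordered in exactly the way required. The main obstacle I expect is carefully matching conventions in this $n = 0$ case so that the block-index inequality comes out with the correct sign; once this is pinned down the remainder of the argument is a straightforward bookkeeping verification based on the content-multiset description above.
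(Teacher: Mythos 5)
Your argument is essentially the paper's argument, packaged slightly differently. After reducing to a single step $\lambda = (\sigma_\beta\mu)_+ < \mu$ and invoking Lemma \ref{Lem:ordering}, you describe the symmetric difference of boxes directly in terms of the content multisets $S_i(\mu)$, whereas the paper first passes to the not-necessarily-monotone shape $\lambda'$ attached to the unsorted $\sigma_\beta A$ (adding/removing $N$ boxes in two specific rows), and then separately argues that the re-sort from $\lambda'$ to $(\sigma_\beta A)_+$ moves boxes within their diagonals and so preserves the $\preceq$-equivalence class. Your content-interval description of the boxes lost and gained in each block (contents $\{a_q+en,\ldots,a_p-1\}$ in block of $p$, contents $\{a_q,\ldots,a_p-en-1\}$ in block of $q$, each with multiplicity one) is correct and follows at once from the fact that the box count in a given block at a given content depends only on the underlying multiset of entries, so no delicate telescoping is really needed; and the resulting bijection, shifting content by $en$, is exactly the one the paper sets up via its $b'_\kappa\mapsto b_\kappa$ pairing. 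Your caution about the $n=0$ case is well placed and worth emphasizing: the paper disposes of the final check with ``it is easy to check that $b'_\kappa\preceq b_\kappa$,'' but for $n=0$ the matched boxes have equal shifted content and live in the distinct blocks containing $p$ and $q$, so the conclusion rests entirely on the tie-breaking clause in the definition of the box order $\preceq$ interacting correctly with the positivity convention for $\beta$. This is precisely the one place where the argument is not automatic, and getting the sign of that block-index comparison right is exactly what one must verify; the rest of your write-up, as you say, is bookkeeping identical in substance to the paper's.
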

\begin{proof}
We only need to prove that if $(\sigma_\beta A)_+<A$, then, for the corresponding virtual multipartitions
$\mu$ and $\lambda$, we have $\mu\prec\lambda$. We can assign an $\ell$-tuple of collections of boxes
to any element of $\Z^m$ similarly to what was done above. The element lies in $\Z^{\bs}$ if and only if
these shapes  satisfy the condition that the lengthes of the rows decrease from top to bottom.
We also can define a relation $\preceq$ as before on the set of these more general shapes but it will be a pre-order
instead of a partial order.

We will first describe the shape corresponding to $\sigma_\beta A$ and then explain how to get
the virtual multipartition corresponding to $(\sigma_\beta A)_+$ from that. We will see that
the shape $\lambda'$ corresponding to $\sigma_\beta A$ is $\preceq \lambda$, while $\mu$ and
$\lambda'$ are equivalent with respect to the preorder $\preceq$.

Let $\beta=\epsilon_i-\epsilon_j+ n\delta$ and $A=(a_1,\ldots,a_m)$. Then $\sigma_\beta A=(a_1',\ldots,a_m')$,
where $a_k'=a_k$ if $k\neq i,j$, and $a_i'=a_j+ ne, a_j'=a_i-ne$. So the virtual multipartition $\lambda'$
corresponding to $A'$ is obtained from $A$ by modifying two rows -- those corresponding to the indices $i,j$.
Namely, we modify the row corresponding to $i$ by removing $N:=a_i-a_j-ne$ boxes from there (recall that,
according to Lemma \ref{Lem:ordering}, $a_i-a_j-ne>0$). And we modify
the row corresponding to $j$ by adding $N$ boxes. Let us number the $N$ added boxes, $b'_1,\ldots,b'_N$, from left to right. Next, let us number the $N$ removed boxes $b_1,\ldots,b_N$, also from left to right. Then it is easy to check
that $b_i'\preceq b_i$ for all $i$.

Now let us explain how to transform $\lambda'$ to $\mu$. If in a virtual Young diagram $\nu$ we have $\nu_i<\nu_{i+1}$
then we take the last $\nu_{i+1}-\nu_i-1$ boxes in the $i+1$th row and move them to the $i$th row. We remark that
under this procedure each box remains in the same diagonal it has been. We apply this procedure as many times as possible.
 This is precisely the way to get $\mu$ from $\lambda'$ (if the shape that we get at the end is not a virtual
Young diagram, then one cannot transform $\sigma_\beta A$ into an element of $\Z^{\bs}$ by applying a permutation
from $\mathfrak{S}_{s_1}\times\ldots\times \mathfrak{S}_{s_\ell}$). The transformation does not change the equivalence
class of a shape with respect to the preorder $\preceq$. So we have proved that $\mu\preceq\lambda$.
\end{proof}

\subsection{Full parabolic affine category $\mathcal{O}$}
Let $m,e,{\underline{s}}=(s_1,\ldots,s_{\ell}),\g:=\gl_m, \hat{\g}, \h,\hat{\h}$ have the same meaning as in Section \ref{SS_parab_poset}.

Further let $\p$ be the parabolic
subalgebra in $\g$ that fixes the $\ell-1$ subspaces $\operatorname{Span}(e_1,\ldots,e_{s_1}),$ $ \operatorname{Span}(e_1,\ldots,e_{s_1+s_2}),
\ldots, \operatorname{Span}(e_1,\ldots,e_{s_1+\ldots+s_{\ell-1}})$. Its Levi subalgebra is isomorphic to $\gl_{s_1}\times\gl_{s_2}\times\ldots\times \gl_{s_\ell}$. Let $\OCat^{\p}_{-e}$ stand for the parabolic category
$\mathcal{O}$ for $\hat{\g}$ on level $-e$, whose objects are integrable over $\p\oplus t\g[t]$. See \cite[Section 2]{VV}
for details.

The category $\OCat^{\p}_{-e}$ together with parabolic Verma modules $\Delta(A), A\in \Z^{\bs},$ with $\rho$-shifted
highest weight $\alpha_A$ becomes a highest weight
category if we weaken (HW2) and allow the projective objects to lie in the pro-completion of this category.
For a highest weight order on $\Z^{\bs}$ we can take the order $<$ from Section \ref{SS_parab_poset},
see \cite[5.1,5.2]{VV}.

We remark that if $\Delta(A),\Delta(A')$ lie in the same block of $\OCat^{\p}_{-e}$, then  the $m$-tuples
of residues of $(a_1,\ldots,a_m)$ and of $(a_1',\ldots,a_m')$ modulo $e$ differ by a permutation.

Multiplicities (of the simples in the standards) for $\OCat^{\p}_{-e}$ are known, they are given by some
Kazhdan-Lusztig polynomials, see \cite[Proposition 5.8]{VV}.

Also we will need a deformation of $\OCat^{\p}_{-e}$. Let $\bar{\param}$ denote the space $\{(x_0,\ldots,x_\ell)| x_1+\ldots+x_{\ell}=0\}$, it is naturally identified with $\tilde{\param}$.
Consider the category $\OCat^{\p}_{-e,R}$ consisting of all $\hat{\g}\otimes R$-modules $M$ satisfying the following conditions:
\begin{itemize}
\item the action of $\hat{\g}\otimes R$ on $M$ is $R$-linear and $M$ is finitely generated over
$R\otimes U(\hat{\g})$.
\item the level of $M$ is $(x_0-\frac{1}{e})^{-1}$,
\item the action of $\p\oplus t\g[t]$ on $M$ is locally finite, meaning that every vector from $M$
lies in a finitely generated $\p\oplus t\g[t]$-stable $R$-submodule.
\item for any $i$, the element $\operatorname{id}_i\in \gl_{s_i}\subset \p$ acts on $M$ diagonalizably
with eigenvalues in $\Z+x_i$. Moreover, the action of $\gl_{s_i}$ on $M/(x_0,\ldots,x_\ell)$
integrates to $\operatorname{GL}_{s_i}$.
\end{itemize}

For example, we still have analogs $\Delta_{R}(A)$ of parabolic Verma modules in
$\OCat^{\p}_{-e,R}$.

\subsection{Truncated parabolic affine category $\mathcal{O}$}\label{SS_trunc_par}
Following \cite{VV}, we consider  certain truncations of $\OCat^{\p}_{-e}, \OCat^{\p}_{-e,R}$.
For a non-negative integer $n$ that is less then all $s_1,\ldots,s_{\ell}$ we are going to define a truncated subcategory $\OCat^{\p}_{-e}(\leqslant n)\subset \OCat^{\p}_{-e}$. Recall that in Section \ref{SS_parab_poset} we have identified the highest weight  poset $\Z^{\bs}$ of $\OCat^{\p}_{-e}$ with the set of virtual $\ell$-multipartitions $(\lambda^{(1)},\ldots,\lambda^{(\ell)})$,
where $\lambda^{(i)}$ has $s_i$ rows. We can embed the set $\mathcal{P}_\ell(n)$ of all $\ell$-multipartitions of $n$
into $\Z^{\bs}$ as follows. To a multipartition $\lambda=(\lambda^{(1)},\ldots,\lambda^{(\ell)})$ we
assign a virtual multipartition $\tilde{\lambda}=(\tilde{\lambda}^{(1)},\ldots,\tilde{\lambda}^{(\ell)})$ in the following way: the columns of $\tilde{\lambda}^{(i)}$ with positive numbers are the same as of $\lambda^{(i)}$, while the columns with non-positive numbers consist of precisely $s_i$ elements. Since $n< s_i$ for all $i=1,\ldots,\ell$, this map is well defined. Below we will always view $\mathcal{P}_\ell(n)$ as a subset of $\Z^{\bs}$ in this way.
Set $\mathcal{P}_\ell(\leqslant n)=\bigsqcup_{j\leqslant n}\mathcal{P}_\ell(j)$.
By \cite[Proposition A6.1]{VV},   if $\lambda\in \mathcal{P}_\ell(\leqslant n)$
and $\mu\in \Z^{\bs}$ is less than $\lambda$ (in the linkage order recalled in Section \ref{SS_parab_poset}), then $\mu\in \mathcal{P}_\ell(\leqslant n)$ too.

Consider the Serre subcategory $\OCat^{\p}_{-e}(\leqslant n)$ of $\OCat^\p_{-e}$ generated by $\Delta(\lambda), \lambda\in \mathcal{P}_\ell(\leqslant n)$.
From the result quoted in the end of the previous paragraph, it follows
that $\OCat^{\p}_{-e}(\leqslant n)$ is a highest weight subcategory of $\OCat^{\p}_{-e}$ with standard
objects $\Delta(\lambda), \lambda\in \mathcal{P}_\ell(\leqslant n)$. We have a natural direct sum decomposition
$\OCat^{\p}_{-e}(\leqslant n)=\bigoplus_{j=0}^{n}\OCat^{\p}_{-e}(j)$. A deformation $\OCat^{\p}_{-e,R}(\leqslant n)$
of $\OCat^\p_{-e}(\leqslant n)$ is constructed via a similar truncation starting from $\OCat^{\p}_{-e,R}$. It is not difficult to see that it is equivalent
to the category of modules over an associative algebra that is free of finite rank over $R$.
It is a highest weight category over $R$ in the sense of Rouquier, \cite[4.1]{rouqqsch}.

\begin{Conj}[\cite{VV}]\label{Conj:VV}
There is an equivalence $\OCat^{\p}_{-e}(n)\xrightarrow{\sim}\OCat_{\kappa,{\underline{s}}}(n)$ that maps $\Delta(\lambda)\in \OCat^{\p}_{-e}(n)$ to $\Delta(\lambda)\in \OCat_{\kappa,{\underline{s}}}(n)$.
\end{Conj}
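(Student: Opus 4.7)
The plan is to lift both sides to their $R$-deformations $\OCat^{\p}_{-e,R}(\leqslant n)$ and $\OCat_{\kappa,\bs,R}$, establish an equivalence of highest weight categories over $R$, and then specialize at the closed point of $\operatorname{Spec} R$. For this I will use Rouquier's criterion: once both deformed categories are equipped with quotient functors to a common target (a cyclotomic Hecke category) and the highest weight posets are identified, it suffices to show that the quotient functors are $1$-faithful over $R$ and become equivalences after inverting the maximal ideal. Lemma \ref{Lem:ord_compat} already gives compatibility of the linkage order on $\Z^{\bs}$ with the order on $\mathcal{P}_\ell$ used on the Cherednik side, so the poset identification on the $\mathcal{P}_\ell(\leqslant n)$-part supplied in Subsection \ref{SS_trunc_par} is clean and matches standards to standards.

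On the GGOR side the target functor is the KZ functor of \cite{GGOR}. On the truncated affine parabolic side it will be produced in Section \ref{S_trunc_cat} by equipping $\OCat^{\p}_{-e,R}(\leqslant n)$ with a restricted type A categorical Kac-Moody action, obtained from the Kazhdan-Lusztig categorification on the untruncated $\OCat^{\p}_{-e,R}$ via the splitting techniques of \cite{str}, and then invoking Rouquier's construction (a categorification of a highest weight $\hat{\sl}_e$-module produces a quotient functor to the cyclotomic Hecke category). Over the generic point of $\operatorname{Spec} R$ both highest weight categories become semisimple, so the quotient functors become equivalences there as soon as the labels of the images of standards match, which follows from the crystal combinatorics identifying both label sets with $\mathcal{P}_\ell(\leqslant n)$.

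The main obstacle is the $1$-faithfulness of the parabolic quotient functor over $R$. Following the ramification of Rouquier's technique of Rouquier, Shan, Varagnolo and Vasserot, I will reduce $1$-faithfulness to $0$-faithfulness at generic points of codimension-one subvarieties of $\param$ and to $(-1)$-faithfulness at generic points of codimension-two subvarieties. The codimension-one $0$-faithfulness will be checked combinatorially (Section \ref{S_faith_comb}): on a generic hyperplane the category simplifies dramatically and the required $\Ext^1$-vanishings between simples and tiltings are controlled by explicit conditions on the crystal on $\mathcal{P}_\ell$. The hard part is $(-1)$-faithfulness in codimension two. Level-rank type reductions within the categorical framework reduce the problem to $\ell\leqslant 2$; $\ell=1$ is combinatorial, but $\ell=2$ at non-dominant parameters escapes pure combinatorics. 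There I will bring in the derived equivalences of \cite{GL} coming from Procesi bundles on symplectic resolutions of $\C^{2n}/G_n$: these intertwine categorical Kac-Moody actions, act as the identity on $K_0$, and transport projectives at dominant parameters to \emph{pseudo-projective} (standardly filtered) objects at the given parameters. Combined with the embedding of the standardly filtered part of the deformed parabolic side into the corresponding GGOR category supplied by the deformation argument, and with the partial combinatorial $(-1)$-faithfulness available for $\ell=2$, this will force all pseudo-projectives to lie in the parabolic subcategory, which together with the standard machinery closes the argument.
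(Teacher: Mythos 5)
Your high-level architecture matches the paper's: deform over $R$, apply a Rouquier-type criterion, reduce $1$-faithfulness to codimension-one $0$-faithfulness and codimension-two $(-1)$-faithfulness, handle $\ell=2$ via the Procesi-bundle derived equivalences of \cite{GL}, and then show the resulting pseudo-projectives land inside the parabolic subcategory. But there is a concrete gap that would sink the argument as stated: you propose to use the KZ functor and its parabolic analogue to the cyclotomic Hecke category as the common quotient target. This only works when KZ is $0$-faithful, which (see Subsection \ref{SS_KZ}) requires $e>2$ \emph{and} $s_i-s_j\not\equiv 0\pmod e$ for all $i\neq j$. Precisely those excluded cases — $e=2$ or coincident residues — are where the hard work lies, and for them the natural homomorphism $\Hom(M,M')\to\Hom(\mathrm{KZ}\,M,\mathrm{KZ}\,M')$ fails to be an isomorphism on standardly filtered objects. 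The paper circumvents this with the \emph{extended quotient functors} of Section \ref{S_C_cat_equi}: one enlarges the projective generator from the crystal component of $\varnothing$ to the set $\Par_\ell^1$, producing a strictly larger target category $\Cat_R$ on which $0$-faithfulness is recovered (Subsection \ref{SS_0_faith}), and then one must separately check that the extended targets on the GGOR and parabolic sides agree — this is what requires constructing $Q^j_R(\nu)$, proving $Q^1_R(\nu)=P^1_R(\nu)$ (including the delicate simplicity-of-head analysis), and hence verifying the condition $(\spadesuit)$. Your sketch has no room for any of this.

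A secondary but related omission: the closing step — forcing all pseudo-projectives into the parabolic image — is not automatic from $(-1)$-faithfulness alone. The paper needs the image of $\OCat^{1\Delta}_R$ to be closed under the tensoring functors $P_{A,R}(\mu)\,\dot\otimes\,\bullet$, i.e.\ under the full Schur categorical action rather than merely the Kac-Moody functors $E_i,F_i$. That is precisely why Section \ref{S_Cat_Sch} introduces Schur categorifications and proves (Lemma \ref{Lem:Schur_morphism}) that the Schur structure is determined by the Kac-Moody structure, so that quotient morphisms of Kac-Moody categorifications are automatically morphisms of Schur categorifications. Without the Schur structure you cannot run the Proposition \ref{Prop:pproj_descr}-style reduction that expresses every pseudo-projective as a direct summand of $P_{A,R}(\mu)\,\dot\otimes\,\bar P_R$ with $\bar P_R$ in the $\Lambda_0$-standardly filtered part, which is exactly where the partial $\ell=2$ combinatorics of Subsection \ref{SS:-1faith_lev2} can be brought to bear.
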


We will prove this conjecture when $m\gg 0$ (we will explain how big $m$ one has to take later).
The multiplicities in $\OCat^{\p}_{-e}(n)$ are known, see \cite[Section 8]{VV}:  under the usual identification of
$[\OCat^{\p}_{-e}(j)]$ with the degree $j$ part of the level $\ell$ Fock space  $\mathcal{F}^{\bs}$,
the classes of simples are the elements of Uglov's dual canonical basis
introduced in \cite{Uglov}. This gives multiplicities in
$\OCat_{\kappa,{\underline{s}}}(n)$. As indicated in \cite{VV}, these multiplicity formulas are exactly as predicted
by a conjecture of Rouquier, \cite[6.5]{rouqqsch}.

\begin{Rem}
An assumption that $m$ is very large is crucial for some of techniques we use
(combinatorial sufficient conditions for faithfulness to be discussed
in Section \ref{S_faith_comb}). We do not have a proof without this assumption.
Conjecture \ref{Conj:VV} under the assumption $n\leqslant s_i$ for all $i$
is proved in \cite[Theorem 6.9]{RSVV}.
\end{Rem}

\section{Faithfulness}\label{S_faith}
Here we are going to provide an extension of a technique used by Rouquier in \cite{rouqqsch} to prove an analog
of Conjecture  \ref{Conj:VV} in the $\ell=1$ case. Rouquier's technique requires to check that certain functors
are faithful on standardly filtered objects.

\subsection{Checking faithfulness}
Let $R$ be an algebra of formal power series  over $\C$, $R=\C[[V]]$, where $\dim V=2$,
$p$ be the maximal ideal,
and $\OCat_R$ be a highest weight category over $R$ with labeling set $\Lambda$. When we say this, we mean, in particular,
that $\OCat_R$ is the category of modules over a free finite rank algebra $A_R$ over $R$. We remark that all standardly filtered modules are free over $R$.

Choose some projective object, say $P_p$, in the specialization $\OCat_{p}$ and extend it to
a projective $P_R$ in $\OCat_R$. Let $\Cat_R$ be the quotient category associated to $P_R$. On the level
of $A_R$-modules, the quotient functor is just the multiplication by an idempotent.

We say that the quotient morphism $\pi_p$ is $(-1)$-faithful
if it is faithful on standardly filtered objects, i.e., objects
that admit a filtration with subsequent quotients $\Delta_p(\lambda),\lambda\in \Lambda$.
This is equivalent to say that a simple not covered by $P_p$ cannot appear in the socle of a standard object.
We say that $\pi_p$ is $0$-faithful if it is fully faithful on standardly filtered
objects. Finally, we say that $\pi_p$ is $1$-faithful if it is $0$-faithful
and, in addition, $\operatorname{Ext}^1_{\OCat_p}(M,M')=\Ext^1_{\Cat_p}(\pi(M),\pi(M'))$
for any standardly filtered objects $M,M'$. Of course, we can give completely analogous
definitions for $\OCat_R$ or for any specialization of $\OCat_R$.

We make  the following assumption  on $\OCat_R$:
\begin{itemize}
\item[($\heartsuit$)] There are $y_1,\ldots,y_k\in V^*$  such that after localizing $y=y_1\ldots y_k$ the category
$\OCat_R$ becomes equivalent to the category of modules over the direct sum of matrix algebras and  the functor $\pi_R$ becomes
an equivalence of the categories.
Further, for any point $\p$ of codimension $1$, the functor $\pi_{\p}$ is $(-1)$-faithful.
\end{itemize}

Of course, we can assume that $y_1,\ldots,y_k$ are pairwise non-proportional.

The following proposition  is an elaboration of Rouquier's results, \cite[4.2]{rouqqsch}.

\begin{Prop}\label{Prop:faith}
Suppose that  ($\heartsuit$) holds. Then the functor $\pi_R$ is $0$-faithful.
\end{Prop}

\begin{Lem}\label{Lem:Hom_freeness}
Let $A_R$ be an associative algebra that is free of finite rank over $R$.
Let $M_R,M'_R$ be $A_R$-modules that are free over $R$. Then $\Hom_{A_R}(M,M')$ is a free $R$-module.
\end{Lem}
\begin{proof}
Pick a basis $x_1,x_2\in V^*$. Then we have an exact sequence
$$0\rightarrow \Hom_{A_R}(M,M')\xrightarrow{x_1} \Hom_{A_R}(M,M')
\rightarrow \Hom_{A_R/(x_1)}(M/x_1M,M'/x_1M').$$
The last term is a torsion free and hence free $R$-module.
We conclude that $x_1,x_2$ is a regular sequence for $\Hom_{A_R}(M,M')$.
Therefore $\Hom_{A_R}(M,M')$ is a maximal Cohen-Macaulay $R$-module.
Since $R=\mathbb{C}[[x_1,x_2]]$, we conclude that $\Hom_{A_R}(M,M')$ is free.
\end{proof}

\begin{proof}[Proof of Proposition \ref{Prop:faith}]
%
Set $R_1:=R/(y^s)$, where $s$ is a positive integer.
Choose standardly filtered objects $M,M'$ in $\OCat_R$ and let $M_1,M_1'$ be their specializations to $\OCat_{R_1}$.
We have the following commutative diagram, where $\Hom$'s are taken in $\OCat_R,\Cat_R$.

\begin{center}
\begin{picture}(100,50)
\put(4,2){$\Hom(M_1,M'_1)$}
\put(50,2){$\Hom(\pi_{R_1}M_1, \pi_{R_1}M_1)$}
\put(5,15){$\Hom(M,M')$}\put(50,15){$\Hom(\pi_{R} M, \pi_{R} M')$}
\put(5,28){$\Hom(M,M')$}\put(50,28){$\Hom(\pi_{R} M, \pi_{R} M)$}
\put(14,41){$0$}\put(60,41){$0$}
\put(15,40){\vector(0,-1){8}}
\put(15,27){\vector(0,-1){8}}
\put(15,14){\vector(0,-1){8}}
\put(61,40){\vector(0,-1){8}}
\put(61,27){\vector(0,-1){8}}
\put(61,14){\vector(0,-1){8}}
\put(30,16){\vector(1,0){19}}
\put(30,29){\vector(1,0){19}}
\put(30,3){\vector(1,0){19}}
\put(16,22){{\footnotesize $y^s$}}
\put(62,22){{\footnotesize $y^s$}}
\end{picture}
\end{center}

Note that since $M,M'$ are free over $R$, the $R$-module $\Hom(M,M')$
is free, see Lemma \ref{Lem:Hom_freeness}.
The same is true for $\Hom(\pi_R M,\pi_R M')$. So $\Hom(M,M')/y^s \Hom(M,M')$ is free over $R/(y^s)$.
Since the functor
$\pi_{\p}$ is $(-1)$-faithful in the generic point $\p$ of each
$R/(y_i)$ we see that the kernel of the bottom horizontal arrow is
supported at $p$. It follows that its intersection with the
submodule  $\Hom(M,M')/y^s \Hom(M,M')\subset \Hom(M_1,M_1')$
is zero. We deduce that $\Hom(M,M')/y^s\Hom(M,M')
\hookrightarrow \Hom(\pi_R M,\pi_R M')/y^s\Hom(\pi_R M,\pi_R M')$.
By the choice of $y$, the kernel and the cokernel of $\Hom(M,M')
\rightarrow \Hom(\pi_R M, \pi_R M')$ are annihilated by some power
of $y$, we may assume that this power is $y^s$. Note that this implies
that the ranks of the free $R$-modules  $\Hom(M,M')$,
$\Hom(\pi_R M,\pi_R M')$ coincide and so the homomorphism
$\Hom(M,M')\rightarrow \Hom(\pi_R M,\pi_R M')$ is given by multiplication
with a square matrix, say $A$. Since
$$\Hom(M,M')/y^s \Hom(M,M')\hookrightarrow
\Hom(\pi_R M,\pi_R M')/y^s \Hom(\pi_R M,\pi_R M'),$$
we see that $\det(A)$ is invertible at the generic point of every $R/(y_i)$.
Also it is invertible after localizing $y$. We deduce that $\det(A)$
is invertible in $R$ and this completes the proof.
\end{proof}

The following lemma is a direct corollary of \cite[Proposition 4.42]{rouqqsch}.

\begin{Lem}\label{Lem:1_faith}
If $\pi_p$ is $0$-faithful, then $\pi_R$ is 1-faithful.
\end{Lem}

Note that all results in this section are still true when $R$ is a formal power series ring in
one variable.

\subsection{Category equivalence}
Suppose that $R$ is a formal power series ring over $\C$.
Let $\OCat^1_R,\OCat^2_R$ be two highest weight categories. Pick projectives
$\bar{P}^i_R\in \OCat^i_R$ with $\End_{\OCat^1_R}(\bar{P}^1_R)\cong
\End_{\OCat^2_R}(\bar{P}^2_R)$. Set $\Cat_R:=\End_{\OCat^i_R}(\bar{P}^i_R)^{opp}\operatorname{-mod}$.
Let $\bar{\pi}^i_R:\OCat^i_R\rightarrow \Cat_R$ denote the quotient functor
defined by $\bar{P}^i_R, i=1,2$.
Let $P^i_R$ be a summand of $\bar{P}^i_R$ that is  injective after specialization to the closed point 
$p$ and generates  $\OCat^i_R$
after passing to the generic point of $R$. Suppose that $\bar{\pi}^1_R(P^1_R)=\bar{\pi}^2_R(P^2_R)$.
As in \cite[Section 4.2]{rouqqsch}, this gives rise to the identification of the sets of
irreducibles in $\OCat^1_p,\OCat^2_p$.

\begin{Thm}\label{Thm:Thm_equi}
In the notation above,  suppose that the following holds.
\begin{itemize}
\item[(i)] There is a common highest weight order on the identified sets
$\operatorname{Irr}(\OCat^i_p)$.
\item[(ii)] The functor  $\overline{\pi}^2_R$ is 1-faithful.
\item[(iii)] The functor $\overline{\pi}^1_R$ is 0-faithful.
\item[(iv)] Every   projective in $\OCat^2_R$ admits an
inclusion into a  $(P^2_R)^{\oplus m}$ (for some $m$)
with standardly filtered cokernel. 
\end{itemize}
Then there is an equivalence $\OCat^1_R\xrightarrow{\sim}
\OCat^2_R$ intertwining the functors $\overline{\pi}^1_R,\overline{\pi}^2_R$.
This equivalence restricts to an equivalence $\OCat^{1\Delta}_R\xrightarrow{\sim}
\OCat^{2\Delta}_R$ and the restriction coincides with $(\overline{\pi}^2_R)^*\circ \overline{\pi}^1_R$.
\end{Thm}

We will give a proof after two auxiliary lemmas. The first one is standard.

\begin{Lem}\label{Lem:im_proj}
Let $\OCat^1_R,\OCat^2_R$ be two highest weight categories over $R$
with the same labeling sets and highest weight orders.
Suppose that there is a fully faithful inclusion $\iota:\OCat^{1\Delta}_R\hookrightarrow \OCat^{2\Delta}_R$
that maps $\Delta^1_R(\lambda)$ to $\Delta^2_R(\lambda)$ for all $\lambda$.
This embedding is an equivalence if and only if the image contains $\OCat^2_R\operatorname{-proj}$.
In this case, we have an equivalence $\OCat^1_R\xrightarrow{\sim}\OCat^2_R$
extending $\iota$.
\end{Lem}
\begin{proof}
We need to show that
if $\OCat^2_R\operatorname{-proj}\subset \iota(\OCat^{1\Delta}_R)$, then
$\iota(\OCat^{1}_R\operatorname{-proj})=\OCat^{2}_R\operatorname{-proj}$.
By \cite[Lemma 4.22]{rouqqsch},
$\OCat^2_R\operatorname{-proj}\subset \iota(\OCat^1_R\operatorname{-proj})$.
Since the number of the indecomposable projectives in $\OCat^1_R,\OCat^2_R$
coincide, we see that   $\OCat^2_R\operatorname{-proj}= \iota(\OCat^1_R\operatorname{-proj})$.
\end{proof}

\begin{Lem}\label{Lem:R1_vanish}
Under the assumptions of Theorem \ref{Thm:Thm_equi},
let $M\in \OCat^{1\Delta}_R$ be such that
$M$ is included into $(P^1_R)^{\oplus m}$ with standardly filtered cokernel.
Then $R^1(\bar{\pi}_R^1)^*(M)=0$.
\end{Lem}
\begin{proof}
Let $N$ denote the cokernel of the inclusion $M\hookrightarrow (P^1_R)^{\oplus m}$
in the lemma.
We have a long exact sequence
\begin{align*}0\rightarrow &(\bar{\pi}^1_R)^*\circ \bar{\pi}^1_R(M)\rightarrow
(\bar{\pi}^1_R)^*\circ \bar{\pi}^1_R(P^1_R)^{\oplus m}\rightarrow
(\bar{\pi}^1_R)^*\circ \bar{\pi}^1_R(N)\rightarrow\\ &R^1(\bar{\pi}^1_R)^*\circ \bar{\pi}^1_R(M)\rightarrow
R^1(\bar{\pi}^1_R)^*\circ \bar{\pi}^1_R(P^1_R)^{\oplus m}.\end{align*}
Since $\bar{\pi}^1_R$ is $0$-faithful, we see that the first line is
just $0\rightarrow M\rightarrow (P^1_R)^{\oplus m}
\rightarrow N$. So $R^1(\bar{\pi}^1_R)^*\circ \bar{\pi}^1_R(M)\hookrightarrow
R^1(\bar{\pi}^1_R)^*\circ \bar{\pi}^1_R(P^1_R)^{\oplus m}$. But $\bar{\pi}^1_p(P^1_p)$
is injective in $\Cat_p$ and so the target object is 0. This completes the proof of this lemma.
\end{proof}

\begin{proof}[Proof of Theorem \ref{Thm:Thm_equi}]
Note that $\iota:=(\pi^2_R)^*\circ\pi^1_R:\OCat^{1\Delta}_R\rightarrow
\OCat^{2\Delta}_R$ is a fully faithful embedding mapping $\Delta^1_R(\lambda)$
to $\Delta^2_R(\lambda)$, see the proofs of \cite[Lemma 4.48,Theorem 4.49]{rouqqsch}.

Let us order the labels $\lambda_1,\ldots,\lambda_r$ for $\OCat^i_R$ in such
a way that $\lambda_i>\lambda_j$ implies $i<j$. Let us write
$(\OCat^i_R)^{\Delta,\leqslant k}$ for the full subcategory
of $\OCat^{i\Delta}_R$ consisting of all objects filtered
by $\Delta^i_R(\lambda_j)$ with $j\leqslant k$. For $M\in \OCat^{i\Delta}_R$, let us write
$M_{\leqslant j}$ for the uniquely defined subobject of
$M$ that is filtered by $\Delta^i_R(\lambda_\ell)$ with $\ell\leqslant j$,
while the quotient $M/M_{\leqslant j}$ is filtered
with $\Delta^i_R(\lambda_\ell)$ with $\ell>j$.

We will prove that
\begin{itemize}\item[$(E_k)$] $\iota\left((\OCat^1_R)^{\Delta,\leqslant k}\right)=(\OCat^2_R)^{\Delta,\leqslant k}$
\end{itemize}
using an induction on $k$. Note that $(E_r)$ implies the claim of the theorem thanks to
Lemma \ref{Lem:im_proj}. The base $k=1$ follows from the claim that
$\Delta^2_R(\lambda)=\iota(\Delta^1_R(\lambda))$
for any $\lambda$. The proof of the induction step, $(E_k)\Rightarrow (E_{k+1})$
is in two substeps: we first show that $(E_k)$ implies
\begin{itemize}
\item[$(I_k)$] For any $\hat{P}^2_R\in \OCat^2_{R}\operatorname{-proj}$,
there is an inclusion $\hat{P}^2_{R,\leqslant k}$ into $\iota(P^1_R)^{\oplus m}$
with standardly filtered cokernel.
\end{itemize}
Then we show that $(I_k)\Rightarrow (E_{k+1})$.


{\it Step 1}. Let $M$ be such as in Lemma \ref{Lem:R1_vanish} and $N$ be any object in $\OCat^{1\Delta}_R$.
We claim that if $Q\in \OCat^{2\Delta}_R$ is included into an exact sequence
$0\rightarrow \iota(M)\rightarrow Q\rightarrow \iota(N)\rightarrow 0$, then $Q\in \iota(\OCat^{1\Delta}_R)$.
Indeed, we have an exact sequence $0\rightarrow M\rightarrow (\bar{\pi}^1_R)^*\circ
\bar{\pi}^2_R(Q)\rightarrow N\rightarrow R^1(\bar{\pi}^1_R)^*\circ \bar{\pi}^1_R(M)$ in $\OCat^1_R$.
From this sequence and Lemma \ref{Lem:R1_vanish}, we see that $(\bar{\pi}^1_R)^*\circ
\bar{\pi}^2_R(Q)\in \OCat^{1\Delta}_R$. This implies $Q\in \iota(\OCat^{1\Delta}_R)$.

{\it Step 2}. Now we prove $(E_k)\Rightarrow (I_k)$.

 Note that, for $M,M'\in \OCat^{i\Delta}_R$,
an inclusion $M\hookrightarrow M'$ with standardly filtered
cokernel restricts to an inclusion $M_{\leqslant j}\hookrightarrow
M'_{\leqslant j}$ with standardly filtered cokernel.

Let $\hat{P}^2_R\in \OCat^2_R\operatorname{-proj}$. We claim
that $\iota^{-1}(\hat{P}^2_{R,\leqslant k})\in \OCat^{1\Delta}_R$ is included
into $(P^1_R)^{\oplus m}$ with standardly filtered cokernel, this claim is equivalent
to $(I_k)$.  Indeed, from the inclusion $\hat{P}^2_{R}\subset (P^2_R)^{\oplus m}$
with standardly filtered cokernel, we deduce the inclusion  $\hat{P}^2_{R,\leqslant k}
\subset(P^2_{R,\leqslant k})^{\oplus m}$ with standardly
filtered cokernel. The argument of the proof of \cite[Lemma 4.48]{rouqqsch}
shows that $\bar{\pi}^1_R(P^1_{R,\leqslant k})=
\bar{\pi}^2_R(P^2_{R,\leqslant k})$ in $\Cat_R$.
We conclude that $\iota(P^1_{R,\leqslant k})=
P^2_{R,\leqslant k}$. So  $\hat{P}^2_{R,\leqslant k}\hookrightarrow
\iota(P^1_R)^{\oplus m}_{\leqslant k}$ with standardly filtered cokernel. This implies $(I_k)$.

{\it Step 3}. We now proceed to proving $(I_k)\Rightarrow (E_{k+1})$. Let
$\hat{P}^2_R\in \OCat^2_R\operatorname{-proj}$.
Let us prove, first, that $\hat{P}^2_{R,\leqslant k+1}\in \iota(\OCat^{1\Delta}_R)$.
We have an exact sequence
$0\rightarrow \hat{P}^2_{R,\leqslant k}\rightarrow \hat{P}^2_{R,\leqslant k+1}\rightarrow
\Delta^2_R(\lambda_{k+1})^{\oplus s}\rightarrow 0$ for some $s\geqslant 0$. By Step 2, we can take
$M:=\iota^{-1}(\hat{P}^2_{R,\leqslant k})$ in Step 1. The inclusion
$\hat{P}^2_{R,\leqslant k+1}\in \iota(\OCat^{1\Delta}_R)$ follows now from Step 1.

{\it Step 4}. In particular, for $j\leqslant k+1$, the objects $P_R(\lambda_j)$  lie in
$\iota\left((\OCat^1_R)^{\Delta,\leqslant k+1}\right)$. By Lemma \ref{Lem:im_proj},
$\iota\left((\OCat^1_R)^{\Delta,\leqslant k+1}\right)=(\OCat^2_R)^{\Delta,\leqslant k+1}$,
which is $(E_{k+1})$.
\end{proof}

\subsection{Example: KZ functor}\label{SS_KZ}
Let us provide an example of a quotient functor that will be of importance for us: the KZ functor introduced
in \cite{GGOR}.

First, we need to recall the definition of a cyclotomic Hecke algebra.
Consider the affine Hecke algebra $\Hecke^{aff}_q(j)$ that is generated by elements $T_1,\ldots,T_{j-1},X_1$ subject to
the relations \begin{align*}&(T_i+1)(T_i-q^2)=0, T_i T_{i+1}T_i=T_{i+1}T_iT_{i+1}, T_i T_j=T_j T_i, |i-j|>1,\\ &T_1 X_1 T_1 X_1=X_1 T_1 X_1 T_1, X_1 T_i=T_i X_1, i>1,\end{align*} where, as before, $q=\exp(\pi \sqrt{-1}/e)$.
We will consider the cyclotomic quotient $\Hecke^{\bs}_q(j)$ of $\Hecke^{aff}_q(j)$ by $\prod_{j=1}^\ell (X_1-q^{2s_i})$.
Also we consider the  deformation $\Hecke^{\bs}_{q,R}(j)$ of $\Hecke^{\bs}_q(j)$ that is an
algebra over $R=\C[\param]^{\wedge_p}$.
Namely, we impose relations $(T_i+1)(T_i-\tilde{q}^2), \prod_{j=1}^\ell (X_1-\tilde{q}^{2\tilde{s}_i})$,
where $\tilde{q}=\exp(\pi\sqrt{-1}(\frac{1}{e}+ x_0)), \tilde{s_i}=s_i+ x_i, i=1,\ldots,\ell$ (where we normalize
$x_1,\ldots,x_\ell$ by $x_1+\ldots+x_\ell=0$) and leave all other relations unchanged.
It is known that the localization $\operatorname{Frac}(R)\otimes_{R}\Hecke_{q,R}^{\bs}(j)$
is a split semisimple algebra over $\operatorname{Frac}(R)$, see \cite{ArikiKoike}.
We also can consider the deformation
$\Hecke^{aff}_{q,R}(j)$ of $\Hecke^{aff}_q(j)$.

Then according to \cite{GGOR}, there is a projective object $P_{KZ}$ in $\OCat_{\kappa,\bs}(j)$ that defines
the quotient functor $\operatorname{KZ}:\OCat_{\kappa,\bs}(j)\twoheadrightarrow \Hecke^{\bs}_q(j)\operatorname{-mod}$. Moreover,
the deformation $P_{KZ,R}$ of $P_{KZ}$ to $\OCat_{\kappa,\bs,R}(j)$ defines the quotient functor
$\operatorname{KZ}_{\param}:\OCat_{\kappa,\bs,R}(j)\twoheadrightarrow \Hecke^{\bs}_{q,R}(j)\operatorname{-mod}$.

Let us describe the faithfulness properties of $\operatorname{KZ}$ established in \cite[Section 5]{GGOR}.
First of all, $\operatorname{KZ}$ is $(-1)$-faithful,
this is clear from the construction. Also it is fully faithful on projectives.
Finally, under certain conditions on $p$, the functor $\operatorname{KZ}$ is $0$-faithful.
Namely, this is equivalent to $e>2$ and $s_i-s_j$ not divisible by $e$ for any different $i,j$.

\begin{Prop}\label{Prop:Cher_proj_emb}
Let $P_R$ be a projective object in $\mathcal{O}_{\kappa,\bs,R}(n)$ and let $P_{KZ,R}$
denote the projective object defining the KZ functor
$\mathcal{O}_{\kappa,\bs,R}(n)\rightarrow \Hecke^{\bs}_{q,R}(n)\operatorname{-mod}$.
Then there is $m>0$ and an embedding
$P_R\hookrightarrow P_{KZ,R}^{\oplus m}$ with standardly filtered cokernel.
\end{Prop}
\begin{proof}
The proof follows that of \cite[Theorem 5.1]{LW}.
Let $\operatorname{Res}: \OCat_{\kappa,\bs,R}(n)\rightarrow R\operatorname{-mod}$ be the Bezrukavnikov-Etingof restriction functor corresponding to
the parabolic subgroup $\{1\}\subset G_n$ and let $\operatorname{Ind}:R\operatorname{-mod}
\rightarrow \OCat_{\kappa,\bs,R}(n)$ be the corresponding induction functor, see \cite[Section 3.5]{BE},
a two-sided adjoint to $\Res$.
We have $P_{KZ,R}=\operatorname{Ind}(R)$, \cite[Example 3.15]{BE}. So we get an adjointness
homomorphism $P_R\rightarrow \Ind(\Res(P_R))$. It is an embedding after the specialization
at $p$ because the socle of $P_p$ does not contain simples
annihilated by $\Res$. It follows that the cokernel $C_R$ of $P_R\rightarrow
\Ind(\Res(P_R))$ is free over $R$. Let us check that $C_R$ is standardly
filtered. By \cite[Lemma 4.21]{rouqqsch}, it is enough to check that $\operatorname{Ext}^i(C_R,\nabla_R(\lambda))$
equals zero for all $\lambda$ and all $i>0$. For $i>1$ this is automatic as $C_R$ has a projective resolution
of length $1$. To check that $\operatorname{Ext}^1(C_R,\nabla_R(\lambda))=0$ it is enough to show
that $\operatorname{Hom}(\Ind(\Res(P_R)), \nabla_R(\lambda))\twoheadrightarrow
\operatorname{Hom}(P_R,\nabla_R(\lambda))$. The first term is naturally identified
with  $\operatorname{Hom}(P_R, \Ind(\Res(\nabla_R(\lambda))))$. So it is enough to
check that $\Ind(\Res(\nabla_R(\lambda)))\twoheadrightarrow \nabla_R(\lambda)$.
The latter is a consequence of the fact that the head of $\nabla_p(\lambda)$ does not
contain simples annihilated by $\Res$, which in its turn follows from the analogous
claim for the socles of standard objects, because costandards are obtained from
standards by duality, see \cite[Section 4.2]{GGOR}.
\end{proof}

\section{Categorical Kac-Moody actions}\label{S_cat_KM}
According to \cite{Rouquier_2Kac}, one gets a
functor from $\OCat_R$ to $\bigoplus_{j\geqslant 0}\Hecke_{q,R}^{\bs}(j)\operatorname{-mod}$
if $\OCat_R$ is equipped with an appropriate type A
categorical Kac-Moody action. So in this section we recall the categorical Kac-Moody actions on
$\OCat_{\kappa,{\underline{s}}}$ (and its deformation) and on $\OCat^{\p}_{-e}$ (and its deformation).
Further, we will recall some results of \cite{cryst},\cite{str} on highest weight categorical
$\sl_2$-actions. These results will be used in the next section to produce a "restricted"
categorical Kac-Moody action on $\OCat^{\p}_{-e, R}(\leqslant n)$. Also they will
be used several times below, in particular, to relate the faithfulness properties of quotient
functors to some purely combinatorial questions.

\subsection{Definition and examples}\label{SS_categorif1}
First, let us recall the notion of a type A categorical action (=categorification), essentially due to Chuang and Rouquier,
\cite{CR}, see also \cite[5.3.8]{Rouquier_2Kac}.

Let $\Cat$ be an artinian abelian $\C$-linear category. Then by a type A categorification on $\Cat$
one means a pair of biadjoint functors $E,F$ with fixed one-sided  adjointness morphisms
$\operatorname{Id}\rightarrow EF, FE\rightarrow \operatorname{Id}$ and also functor morphisms
$X\in \operatorname{End}(F), T\in \operatorname{End}(F^2)$ subject to the following condition:
the assignment $X_i\mapsto 1_F^{n-i}X 1_F^{i-1}, T_j\mapsto 1_F^{n-j-1}T 1_F^{j-1}$ extends
to an algebra homomorphism $\Hecke^{aff}_q(n)\rightarrow \End(F^n)$ for any $n$
(here $1_F$ stands for the identity transformation of $E$ and $q$ is a non-zero element of $\C$).
Similarly, we can define a type A categorification on an $R$-linear category
($q$ is now required to be an invertible element of $R$), here we are required to
get a homomorphism $\Hecke^{aff}_{q,R}(n)\rightarrow \End(F^n)$.

We proceed by recalling type A categorifications on the categories $\OCat_{\kappa,\bs,R}$
and $\OCat^{\p}_{-e,R}$.

A categorification on $\OCat_{\kappa,\bs,R}$ was introduced by Shan, \cite{Shan} (in the specialized
setting, the deformed setting is completely analogous).
The functors $F,E$ on the direct summand $\OCat_{\kappa, \bs,R}(n)$ are the Bezrukavnikov-Etingof induction
(from $G_n$ to $G_{n+1}$) and restriction  (from $G_n$ to $G_{n-1}$) functors, \cite{BE}. The transformations
$X$ and $T$ are defined using the KZ functor recalled in
Section \ref{SS_KZ}. The reader is referred to \cite[Section 5]{Shan}
for details. In particular, this description shows that the object $F^n \Delta_R(\varnothing)$ is projective
and realizes the KZ functor $\operatorname{KZ}: \OCat_{\kappa, \bs, R}(n)\rightarrow\operatorname{mod}$-
$\Hecke_{q,R}^{\bs}(n)$ as
$\Hom(F^n\Delta_R(\varnothing),\bullet)$.

Now let us explain how to equip $\OCat^{\p}_{-e,R}$ with a type $A$ categorification
that essentially appeared in \cite{VV}.

Namely, consider the Kazhdan-Lusztig category $\OCat^{\g}_{-e,R}$ over $R$ (that is the special case of
the category $\OCat^{\p}_{-e,R}$ for $\p=\g$) and the specialization $\OCat^{\g}_{-e}$ of $\OCat^{\g}_{-e,R}$ at the maximal
ideal (of course, the deformation with respect to $x_1,\ldots,x_\ell$ is trivial).
For a finitely dimensional $\operatorname{GL}_m$-module $L$ let $M(L),M_R(L)$ denote the corresponding
Weyl modules in $\OCat^{\g}_{-e}$ and $\OCat^{\g}_{-e,R}$, of course, $M(L)$
is a specialization of $M_R(L)$.  Also recall the duality functor $D$, see e.g. \cite[Section 2.6]{VV}, on $\OCat_{-e,R}^{\g}$.
It sends $M_R(L)$ to $M_R(L^*)$.

For two modules in
$\OCat^{\g}_{-e,R}$ we can take their Kazhdan-Lusztig tensor product (also known as the fusion product)
$\dot{\otimes}$, see \cite[Section 4]{KL} for the definition over $\C$ and \cite[Section 8]{KL}
for the extension to $R$ (see also \cite[Appendix 2.1]{VV}).
To define this tensor product we need to fix some points in $\mathbb{P}^1$ but the result
is independent of this choice up to an isomorphism. The bifunctor $\dotimes$  turns $\OCat^{\g}_{-e,R}$ into
a braided monoidal category, whose unit object is $\tilde{M}(\C)$,
where $\C$ is the trivial $\operatorname{GL}_n$-module, see \cite[Section 31]{KL}.

Also for $M\in \OCat^{\p}_{-e,R}$
and $N\in \OCat^{\g}_{-e,R}$ we can define their product $ N\dotimes M$ (whose definition again depends
on a choice of points but which is well-defined up to an isomorphism). The $\hat{\g}\otimes R$-module
$N\dotimes M$ is still in $\OCat^{\p}_{-e,R}$. This defines a right biexact bifunctor
$\OCat^{\g}_{-e,R}\times \OCat^{\p}_{-e,R}\rightarrow \OCat^{\p}_{-e,R}$ that turns $\OCat^{\p}_{-e,R}$
into a module category over $\OCat^{\g}_{-e,R}$, \cite[Proposition 7.2]{VV}.

Now suppose that $N$ is a standardly filtered object in $\OCat^{\g}_{-e,R}$. Then the endofunctor $N\dotimes\bullet$ of
$\OCat^{\p}_{-e,R}$ is exact, and biadjoint to $D(N)\dotimes \bullet$, see \cite[Corollary 7.3]{VV}.

We will be interested in the case when $N=M_R(\C^m)$. We denote the functor $M_R(\C^m)\dotimes \bullet$ by $F$
and the biadjoint functor $M_R(\C^{m*})\dotimes\bullet$ by $E$. There are distinguished elements $X\in \End(F)$ and
$T\in \End(F^2)$. They are given by monodromy. Namely, recall that to define $F$ we need to fix 3 points:
say $0,z,\infty$, where we put the modules $\tilde{M}\in \OCat^{\p}_{-e,R}, M_R(\C^m)$ and the resulting module
$M_R(\C^m)\dotimes_z M$, respectively, where the subscript ``$z$'' indicates the
dependence on $z$. Then we get a local system over $\C^\times$ with fiber $M_R(\C^m)\dotimes_z M$ over $z$.
The corresponding connection is the $n=1$ special case of the KZ connection: $\frac{d}{dz}-\frac{\Omega}{z}$,
where $\Omega$ is the tensor Casimir, $\Omega=\sum_{i,j=1}^n E_{ij}\otimes E_{ji}$, where $E_{ij}$ denote
the matrix unit on the position $(i,j)$. See, for example, \cite[Lecture 3]{EFK}. The monodromy
of this local system around 0 gives us the transformation $X$ we want. Similarly, to define $T\in \End(F^2)$, we need
to choose two points $z_1,z_2\in \C^\times$. The transformation $T$ is defined similarly to $X$ using the monodromy
of the path interchanging $z_1$ and $z_2$ (in fact, we need to multiply this transformation $T$ by $\tilde{q}$).

Now fix $j\geqslant 1$. By the construction, we get an $\mathfrak{S}_j$-equivariant local system on $(\C^\times)^{reg}:=\{(z_1,\ldots,z_j)\in \C^\times| z_k\neq z_\ell\}$ with fibers $F^j M$,
compare to \cite[Section 2.19]{VV}. The elements $T_0$ given by the monodromy of $z_1$ going around $0$ and $T_i$ given by interchanging $z_{i},z_{i+1}$ define a representation
of the affine braid group $B^{aff}(j)$ on $F^j M$ that is functorial in $M$. By the associativity of
the Kazhdan-Lusztig tensor product, the operators $T_1,\ldots,T_{j-1}$ satisfy the relations
of the analogously defined operators on $N^{\dotimes j}$. By \cite[Section 35]{KL}, the braided
monoidal categories $(\OCat^{\g}_{-e,R},\dotimes)$ and $(U_{q,R}(\g),\otimes)$ are equivalent
(here $U_{q,R}(\g)$ is Lusztig's form of the quantized universal enveloping algebra of $\g$
over $R$),
and the equivalence sends $N$ to the tautological module $R^m$. In particular, each
$T_i$ satisfies the Hecke relation $(T_i-\tilde{q}^2)(T_i+1)=0$. Our conclusion is that
$F^n M$ becomes a module over   $\mathcal{H}^{aff}_{q,R}(j)$.
%

By the previous paragraph, the biadjoint functors $E,F$ together with $X\in \operatorname{End}(F), T\in \operatorname{End}(F^2)$
define a type $A$ categorification on $\OCat^{\p}_{-e, R}$.

The same results are true in the undeformed setting, i.e,  for the category for $\OCat^{\p}_{-e}$. In fact,
they were proved in \cite{VV} in that setting but a generalization to the deformed setting is
straightforward.

We remark that on $\OCat_{\kappa,\bs}, \OCat^{\p}_{-e}$ the type A action becomes a categorical
$\hat{\sl}_e$-action with categorification functors $E_i,F_i, i\in \Z/e\Z$, where the functor $F_i$
is defined as the generalized eigenfunctor of $F$ with eigenvalue $q^i$.

Varagnolo and Vasserot proposed to consider the functor $\Hom(F^n \Delta(\varnothing), \bullet)$ from
$\OCat^{\p}_{-e}(n)$ to $\operatorname{End}(F^n\Delta(\varnothing))^{opp}\operatorname{-mod}$.
They observed that there is a natural  homomorphism from $\Hecke_q^{\bs}(n)$ to  $\operatorname{End}(F^n\Delta(\varnothing))^{opp}$
(we will recall why below). But, a priori, it is not clear why this homomorphism is an isomorphism. Also it
is unclear whether $F^n\Delta(\varnothing)$ is projective in the truncated category (definitely, it is
not projective in the whole parabolic category $\mathcal{O}$). We will check that
$\Hecke_{q,R}^{\bs}(n)\cong\operatorname{End}(F^n\Delta_R(\varnothing))^{opp}$ and that
$F^n\Delta(\varnothing)$ is projective in Section \ref{S_trunc_cat}. Proofs of both claims, see Section \ref{S_trunc_cat},  will be based on the categorical splitting for highest weight $\sl_2$-categorifications discovered
in \cite{str}.

\subsection{Hierarchy structure}\label{SS_hier}
In fact, the categorification functors $E,F$ on $\OCat_{\kappa,\bs,R},\OCat^{\p}_{-e,R}$
are nicely compatible with standardly filtered objects. Moreover, the specialized categories
are highest weight $\sl_2$-categorifications in the sense of \cite{str} for each pair of functors
$(E_i,F_i)$. For the category $\OCat_{\kappa,\bs}$ this was established in \cite[Section 4.3]{str}. We are going to
see that this is also true for $\OCat^{\p}_{-e}$. The first ingredient for the structure of a highest
weight categorical $\sl_2$-action is a certain structure on the poset of a highest weight category
called a hierarchy structure. So we start by establishing such a structure on $\Z^{\bs}$.

Fix a residue $i$ mod $e$. We start by equipping the poset $\Z^{\bs}$ of $\OCat^{\p}_{-e}$
with a hierarchy structure, see \cite[Section 3]{str}.

The first ingredient of a hierarchy structure as defined in \cite[Section 3.1]{str} is a family structure
that is a partition of a poset $\Lambda:=\Z^{\bs}$ into {\it families} $\Lambda_a$ for $a$ in some indexing
set $\mathfrak{A}$ together with  bijections $\sigma_a:\{+,-\}^{n_a}\rightarrow \Lambda_a$. Here the inverse
$\sigma_a^{-1}$ has to be increasing, where we equip  the set $\{+,-\}^{n_a}$ with a poset structure by setting
$(t_1,\ldots,t_{n_a})\preceq (t_1',\ldots,t_{n_a}')$ if the number of $-$'s among $t_1,\ldots, t_k$
does not exceed that for $t_1',\ldots,t_k'$ for any $k$, while the total number of $-$'s is the same.

By definition, two virtual multipartitions $\lambda,\mu$ lie in the same family if they can be obtained
from a single virtual multipartition $\nu$ by adding $i$-boxes. For $\lambda\in \Lambda_a$,  define
a tuple $\sigma_a^{-1}(\lambda)$ as follows. Number all addable and removable $i$-boxes $b$
in the increasing order with respect to the number $d(b)$ used in Section \ref{SS_part_poset}. For each such box $b$,
we write a $+$ if it is addable and a $-$ if it is removable. This collection of $+$'s and $-$'s is, by definition, $\sigma_{a}^{-1}(\lambda)$ (and so $n_a$ is the number of addable/removable $i$-boxes in $\lambda$).  It is clear that $\sigma_a^{-1}$ is increasing with respect to the order $\preceq$ on $\Lambda_a$ and $\preceq$ on $\{+,-\}^{n_a}$.

Let us define now the second component of a hierarchy structure -- a splitting structure.
Namely, to a family $\Lambda_a$ we need to assign a splitting $\Lambda=\Lambda^a_{>}\sqcup \underline{\Lambda}^a_+\sqcup
\underline{\Lambda}^a_-\sqcup \Lambda^a_{<}$ subject to certain axioms (S0)-(S4) from \cite[3.1]{str}. These axioms are
the following.
\begin{itemize}
\item[(S0)] For each $a$ the subsets $\Lambda^a_{<},\underline{\Lambda}^a_-\sqcup \Lambda^a_{<},
\underline{\Lambda}^a_+\sqcup \underline{\Lambda}^a_-\sqcup \Lambda^a_{<}\subset \Lambda$ are poset ideals.
\item[(S1)] $n_a=0$ if and only if $\underline{\Lambda}^a_-, \underline{\Lambda}^a_+=\varnothing$.
\item[(S2)] For each  $a,b$ the family $\Lambda_b$ is contained either in $\Lambda^a_{<}$ or
in $\Lambda^a_{>}$ or in $\Lambda^a_=:=\underline{\Lambda}^a_-\sqcup \underline{\Lambda}^a_+$.
Moreover, suppose $\Lambda_b\subset \Lambda_=^a$. An element $\lambda\in \Lambda_b$ is contained in
$\underline{\Lambda}^a_?$ if and only if the rightmost element of $\sigma^{-1}_b(\lambda)$ is $?$ (for $?=+,-$).
\item[(S3)] Let $a,b\in \mathfrak{A}$. If $\Lambda_b\subset \Lambda_=^a$, then $\underline{\Lambda}^b_?=
\underline{\Lambda}^a_?$ for $?=+,-$. The inclusion $\Lambda_b\subset \Lambda^a_>$ holds if and only if
$\Lambda_a\subset \Lambda^b_<$.
\item[(S4)] Let $a\in A$. Then there is a (automatically, unique)
poset isomorphism $\iota: \underline{\Lambda}^a_-\rightarrow \underline{\Lambda}^a_+$
that maps $\underline{\Lambda}^a_-\cap \Lambda_b$ to $\underline{\Lambda}^a_+\cap \Lambda_b$ such that if $\sigma_b^{-1}(\lambda)=t-$
for $t\in \{+,-\}^{n_b-1}$, then $\sigma_b^{-1}(\iota(\lambda))=t+$.
\end{itemize}


Let us now describe the four subsets $\Lambda^a_{>},\Lambda^a_{<},\underline{\Lambda}^a_+,\underline{\Lambda}^a_-$
in the case of interest. Let $b=(x,y,i)$ be the common smallest addable/removable box for the multipartitions in $\Lambda_a$.   For a multipartition $\lambda$ and a box $b'$, let $|\lambda|^{b'}$ denote the number of boxes $b''\in \lambda$ with $b''\sim b', d(b'')=d(b')$. For all boxes $b'$ with $b'\not\sim b$ or with $b'\sim b, b'<b$ the numbers $|\lambda|^{b'}$ do not depend on the choice of $\lambda\in \Lambda_a$. We remark that for  $\lambda\in \Lambda_a$ the number $|\lambda|^b$ takes one of the two consecutive values, say $s,s+1$.
Let $\underline{b}$ be the box just below $b$ (existing if $x\neq 1$).

Let $\Lambda^a_{>}$ consist of all multipartitions $\mu$ such that  one of the following conditions hold
\begin{itemize}
\item[(i)] There is a box $b'$ such that $d(b')>d(b)$ and we have $|\lambda|^{b''}=|\mu|^{b''}$ if $d(b'')>d(b')$
and $|\lambda|^{b'}>|\mu|^{b'}$.
\item[(ii)] We have $|\lambda|^{b'}=|\mu|^{b'}$ as long as $d(b')>d(b)$, while $|\mu|^b<s$.
\item[(iii)] We have $|\lambda|^{b'}=|\mu|^{b'}$ whenever $d(b')>d(b)$, as well as $|\mu|^b=s$. But there is
a box $\underline{b}'\leq \underline{b}$ such that $|\lambda|^{\underline{b}''}=|\mu|^{\underline{b}''}$
for any $\underline{b}''<\underline{b}'$ and $|\mu|^{\underline{b}'}<|\lambda|^{\underline{b}'}$.
\end{itemize}
Let $\underline{\Lambda}^a_+\sqcup \underline{\Lambda}^a_-$
consist of all multipartitions $\mu$ such that $|\mu|^b=s$ or $s+1$, and $|\mu|^{b'}=|\lambda|^{b'}$ for all $b'$ such that either
$d(b')<d(b)$ or $b'\leq \underline{b}$. Then automatically $b$ is an addable/removable box in any $\mu\in \underline{\Lambda}^a_+\sqcup \underline{\Lambda}^a_-$ and we form the subsets $\underline{\Lambda}^a_+, \underline{\Lambda}^a_-$ accordingly. Finally, let $\Lambda^a_<$
consist of the remaining partitions. The proof that (S0)-(S4) hold is completely parallel to that
given in \cite[Section 3.2]{str}. We remark however, that our definition of the subsets is different from \cite{str}.

The structure that we need to add to family and splitting structures to get a hierarchy structure is a family of subsets $\mathfrak{A}'\subset \mathfrak{A}$ together with posets $\Lambda(\mathfrak{A}')$  possessing both family and splitting
structures. The following axioms should hold.

\begin{itemize}
\item[(H0)] If $(\mathfrak{A}', \Lambda(\mathfrak{A}')), (\mathfrak{A}'', \Lambda(\mathfrak{A}''))\in \mathfrak{H}$
and $\mathfrak{A}'=\mathfrak{A''}$, then $\Lambda(\mathfrak{A}')=\Lambda(\mathfrak{A}'')$. Further, either one of
the subsets $\mathfrak{A}',\mathfrak{A}''$ is contained in the other, or $\mathfrak{A}',\mathfrak{A}''$ are disjoint.
\item[(H1)] $(\mathfrak{A},\Lambda)\in \mathfrak{H}$. If $(\mathfrak{A}', \Lambda(\mathfrak{A}'))\in \mathfrak{H}$, then, for any $a\in\mathfrak{A}'$, there is a splitting structure on $\underline{\Lambda(\mathfrak{A}')}^a$ such that
$\left((\underline{\mathfrak{A}}')^a, \underline{\Lambda(\mathfrak{A}')}^a\right)\in \mathfrak{H}$. Moreover,
every element $(\mathfrak{A}'',\Lambda(\mathfrak{A}''))$ is obtained from $(\mathfrak{A},\Lambda(\mathfrak{A}))$
by doing several steps as in  the previous sentence.
\item[(H2)] Any descending chain of embedded subsets in $\mathfrak{H}$ terminates.
\end{itemize}

\subsection{Highest weight categorical actions}\label{SS_categorif2}
Now let us show that $\OCat^{\p}_{-e}$ is a highest weight categorification in the sense of \cite{str}.

Choose $A\in \Z^{\bs}$ and represent $A$ as an $m$-tuple of integers (and not as a virtual
multipartition). According to \cite[Lemma A2.10]{VV}, the object $F\Delta_{R}(A)$
has a filtration whose successive subquotients are $\Delta_{R}(A^i)$, with $A^i:=A+\epsilon_i$, where $i$
is any index with $A_i\in \Z^{\bs}$. From the description of $X$ given in Section \ref{SS_categorif1} and ordering considerations,
it follows that $X$ preserves the filtration  and acts on the  subquotient $\Delta_{R}(A^i)$ by $\exp(2\pi \sqrt{-1} (a_i+x_i)(x_0-1/e))$.

On the level of virtual multipartitions, the standard subquotients of $F_i\Delta(\lambda)$ correspond to all
multipartitions obtained from $\lambda$ by adding an $i$-box.


Together with the considerations of  Section \ref{SS_hier}, this means that the functors
$F_i,E_i$ define a highest weight categorical $\sl_2$-action (with respect to the hierarchy
structure defined in Section \ref{SS_hier}) on the category $\OCat^{\p}_{-e}$ (modulo
the difference that the projectives lie not in the category but rather in its pro-completion,
but this difference actually does not matter). By definition, a highest weight categorification is a highest
weight category $\Cat$ with a highest weight poset $\Lambda$ equipped with a hierarchy structure,
and with an $\sl_2$-categorification, where the highest weight and the categorification  structures
are related via the following two axioms, see \cite[Section 4.2]{str}:
\begin{itemize}
\item[(i)] $F_i\Delta(\lambda)$ admits a filtration whose successive quotients are $\Delta(\lambda^1),\ldots,\Delta(\lambda^k)$,
where the elements $\lambda^1,\ldots,\lambda^k$ are determined as follows. Set $t=\sigma_a^{-1}(\lambda)$ and let
$j_1>j_2>\ldots>j_k$ be all indexes such that $t_{j_i}=+$. Then $\lambda^i:=\sigma_a(t^i)$, where $t^i$ is obtained
from $t$ by replacing the $j_i$-th element with a $-$.
\item[(ii)] $E_i\Delta(\lambda)$ admits a filtration whose successive quotients are $\Delta(\bar{\lambda}^1),\ldots,\Delta(\bar{\lambda}^l)$, where the elements $\bar{\lambda}^1,\ldots,
    \bar{\lambda}^l$ are determined as follows. Set $t=\sigma_a^{-1}(\lambda)$ and let
$j_1<j_2<\ldots<j_l$ be all indexes such that $t_{j_i}=-$. Then $\bar{\lambda}^i:=\sigma_a(\bar{t}^i)$, where $\bar{t}^i$ is obtained
from $t$ by replacing the $j_i$-th element with a $+$.
\end{itemize}
Recall that we work over $\C$, which is an uncountable field, so we do not need to impose any other assumptions,
see \cite[Section 4.1]{str}.

%

\subsection{Crystals}\label{SS_crystals}
In this subsection we will define certain $\hat{\sl}_e$-crystal structures: two, ``dual'' to each other, on $\mathcal{P}_\ell$ and
one on $\Z^{\underline{s}}$. Then we recall the results of \cite{cryst} on the crystals of highest weight $\sl_2$-categorifications.

Recall that an $\sl_2$-crystal on a set $\Lambda$ is a 5-tuple of maps $\tilde{e},\tilde{f}:\Lambda\rightarrow \Lambda \sqcup\{0\}$
and $h_+,h_-:\Lambda\rightarrow \Z_{\geqslant 0}, \wt:\Lambda\rightarrow \Z$ with the following properties:
\begin{itemize}
\item[(i)] $\wt=h_+-h_-$.
\item[(ii)] $\tilde{f}\lambda=0$ if and only if $h_+(\lambda)=0$. Similarly, $\tilde{e}\lambda=0$ if and only if $h_-(\lambda)=0$.
\item[(iii)] For $\lambda,\lambda'\in \Lambda$ the equality $\tilde{e}\lambda=\lambda'$ is equivalent to $\tilde{f}\lambda'=\lambda$.
\item[(iv)] If $\lambda'=\tilde{e}\lambda$, then $h_+(\lambda')=h_+(\lambda)+1, h_-(\lambda')=h_-(\lambda)-1$.
\end{itemize}
We remark that the functions $h_\pm,\wt$ are uniquely recovered from $\tilde{e},\tilde{f}$ and so below we do not
mention them as a part of a crystal structure.

By an $\hat{\sl}_e$-crystal on $\Lambda$ we will simply mean a collection $(\tilde{e}_i,\tilde{f}_i)_{i=0}^{e-1}$ of maps
$\Lambda\rightarrow \Lambda\sqcup\{0\}$ such that $\tilde{e}_i,\tilde{f}_i$ form an $\sl_2$-crystal. Since we are just going to deal with three particular crystal structures we do not care about compatibility relations between the $\sl_2$-crystals for different $i$. We will often write $h_{i,-},h_{i,+}$ for the functions $h_-,h_+$ defined for the residue $i$.

On the set $\mathcal{P}_\ell$ we will have two structures of $\hat{\sl}_e$-crystals. The first one, $(\tilde{e}_i,\tilde{f}_i)_{i=0}^{e-1}$
will be called the {\it usual} one, this is the structure provided by Uglov's dual canonical basis on the
representation of $U_q(\hat{\sl}_e)$ in the higher level Fock space corresponding to the multi-charge ${\underline{s}}$.
This basis was defined in \cite{Uglov}.
We will also consider a {\it dual} structure $(\tilde{e}_i^*,\tilde{f}_i^*)$
(corresponding to the canonical basis).
To define $\tilde{e}_i,\tilde{f}_i, \tilde{e}_i^*,\tilde{f}_i^*$ on $\lambda\in \mathcal{P}_i$
we will need to recall the notion of the  {\it $i$-signature} of $\lambda$ that will be an ordered collection of $+$'s and $-$'s.

For  a residue $i$ modulo $e$, we say that a box $b$  is an $i$-box if $\cont(b)-i$ is divisible by $e$.
A box $b$ lying in $\lambda$ is {\it removable}  if $\lambda\setminus b$ is still a multipartition. Similarly,
a box $b$ lying outside $\lambda$ is {\it addable} for $\lambda$ if $\lambda\sqcup b$ is still a multipartition.
To get the $i$-signature of $\lambda$
we list addable and removable $i$-boxes in $\lambda$ in the increasing order with respect to $d(b)=-\frac{\ell}{e}\cont(b)-j$,
where $b\in \lambda^{(j)}$. In other words, a box $b$ occurs to the left of $b'$ in our list if either $\cont(b)>\cont(b')$
or $\cont(b)=\cont(b')$ but $j>j'$, where $b\in \lambda^{(j)}, b'\in \lambda^{(j')}$. The $i$-signature $t=(t_1,\ldots,t_k)$ is obtained
from this list by replacing an addable box with a $+$ and a removable box with a $-$.

We are going to introduce two reduction procedures for $i$-signatures. The first one is as follows. Consider the sets $I_\pm$ of indexes
$j$ such that $t_j=\pm$. On each step we are going to remove a consecutive pair $-+$, i.e.,  to remove $j\in I_+, j'\in I_-$ if $j'<j$
and no elements between $j$ and $j'$ remain. We stop when no removal is possible. The terminal pair $(I_+,I_-)$
(that is easily seen to be independent of the order of removals) is called the {\it reduced $i$-signature} of $t$
(or of $\lambda$). It clearly has a property that all elements of $I_+$ are less than all elements of $I_-$.
We will write $(I_+(t),I_-(t))$ for the reduced signature of $t$.


To define the dual crystal structure we use a similar procedure. The difference here
is that we remove consecutive pairs $+-$, i.e., we remove
$j\in I_+, j'\in I_-$ if $j'>j$ and there are no elements between $j$ and $j'$ left.
In this way we will get the {\it dual reduced signature} $(I_+^*(t), I_-^*(t))$, it has a
property that the elements of $I^*_+(t)$ are bigger than the elements of $I^*_-(t)$.
For example, for $t=(+++-++---)$ we have $I_+(t)=\{1,2,3,6\}, I_-(t)=\{7,8,9\}, I^*_+(t)=\{1\}, I^*_-(t)=\varnothing$.

The maps $\tilde{e}_i, \tilde{f}_i,\tilde{e}_i^*, \tilde{f}_i^*$ are defined as follows.
The map $\tilde{f}_i$ adds an $i$-box in the position corresponding to the largest element
of $I_+(t)$ if the latter is non-empty, or sends $\lambda$ to $0$ else. Similarly,
$\tilde{e}_i$ removes the box corresponding to the minimal  element of $I_-(t)$ or sends
$\lambda$ to $0$ if the latter set is empty. The corresponding maps $h_+,h_-$ are defined
by $h_+(\lambda)=|I_+(t)|, h_-(\lambda)=|I_-(t)|$. Next, the map $\tilde{f}^*_i$
adds an $i$-box in the position corresponding to the minimal element of $I^*_+(t)$.
The map $\tilde{e}^*_i$ removes the box in the position corresponding to the maximal element of
$I^*_-(t)$.

In fact, the dual and the usual crystal structures are related via a duality. Namely,
consider the multicharge ${\underline{s}}^\dagger=(-s_{\ell},-s_{\ell-1},\ldots,-s_1)$. Let $\tilde{e}^\dagger_i, \tilde{f}^\dagger_i$
be the crystal operators on $\mathcal{P}_\ell$ associated to this multicharge. For a box $b=(x,y,i)$
consider a box $b^\dagger=(y,x,\ell+1-i)$. Also for a multipartition $\lambda$ consider a multipartition
$\lambda^\dagger:=(\lambda^{(\ell)t},\lambda^{(\ell-1)t},\ldots, \lambda^{(1)t})$, where the superscript $t$
means the usual transposition of Young diagrams. We have $b\prec_{\underline{s}}b'$ if and only if $b'^\dagger\prec_{{\underline{s}}^\dagger} b^\dagger$.
So $\lambda\mapsto \lambda^\dagger$ is an order reversing bijection. Also we remark that the $i$-signature
of $\lambda$ is obtained from the $-i$-signature of $\lambda^\dagger$ by reversing the order of elements.
It follows $\tilde{e}_{-i} \lambda^\dagger= (\tilde{e}^*_{i} \lambda)^\dagger$ and
$\tilde{f}_{-i} \lambda^\dagger= (\tilde{f}^*_{i} \lambda)^\dagger$.

We say that $\lambda$ is {\it singular} if $\tilde{e}_i \lambda=0$ for all $i$
and {\it cosingular} if $\tilde{e}^*_i\lambda=0$ for all $i$.

We would like to remark that the weight functions $\wt$ are the same for the usual and the dual crystal
structure. We say that two multipartitions $\lambda,\mu$  lie in the same block if $|\lambda|=|\mu|$, and
the $e$-tuples of their weight functions coincide (the $e$-tuple is referred to as the weight of a multipartitions).
Equivalently, $\lambda$ and $\mu$ lie in the same block if the number of $i$-boxes in $\lambda$
equals the number of $i$-boxes in $\mu$ for any residue $i$.

Finally, let us define an $\hat{\sl}_e$-crystal structure on $\Z^{\bs}$. This is done absolutely analogously
to the usual crystal structure above but we deal with virtual multi-partitions rather than with ordinary
ones.

Now let us proceed to a categorical interpretation of the usual crystal structure.

In \cite{cryst}, we have determined crystal structures for highest weight $\sl_2$-categorifications.
Let us recall the general construction of the crystal of an $\sl_2$-categorification, essentially due to Chuang
and Rouquier, \cite[Proposition 5.20]{CR}. The crystal structure is on the set of isomorphisms classes of simple objects.
For a simple $L$, the object $EL$ has simple head (the maximal semisimple quotient) and socle
(the maximal semisimple subobject) and they are isomorphic. The same is true for $FL$. By definition, $\tilde{e}L$ (resp., $\tilde{f}L$)
is  the socle of $EL$, resp., $FL$, if that object is nonzero or zero else. Of course, for a highest weight $\sl_2$-categorification
the set of iso-classes of simples is naturally identified with the highest weight
poset $\Lambda$.

Also we have a crystal structure on $\{+,-\}^n$ defined using the ``usual'' cancelation rule
for $+$'s and $-$'s, see above. The main result of \cite{cryst}
is that, for a highest weight $\sl_2$-categorification $\Cat$ with poset $\Lambda$,
each family $\Lambda_a$  is a subcrystal (which is easy), and each map $\sigma_a$
is an isomorphism of crystals (which is harder).

A corollary of the previous paragraph is that the crystal of the $\hat{\sl}_e$-categorification on $\OCat_{\kappa,\bs}$
is the same as the usual crystal described above in this subsection. The crystal
structure on $\OCat^{\p}_{-e}$ is obtained in a similar way, if we replace the usual multipartitions
with virtual ones.

We will need an alternative characterization of the simples lying in certain crystal components.

\begin{Lem}\label{Lem_cryst}
Let $\Cat=\bigoplus_{j=0}^\infty \Cat(j)$ be a  highest weight $\hat{\sl}_e$-categorification
of the Fock space $\mathcal{F}^{\bs}$. Let $\Par^{(r)}_\ell$ denote the union of crystal components
containing a multi-partition of $\leqslant r$. Pick $\lambda\in \Par_\ell$. Then $\lambda\in \Par_\ell^{(r)}$
if and only if $E^{|\lambda|-r}L(\lambda)\neq 0$.
\end{Lem}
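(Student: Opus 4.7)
The plan is to characterize, for any $k\geqslant 0$, exactly when $E^kL(\lambda)=0$ in terms of the $\hat{\sl}_e$-crystal on the simples of $\Cat$, and then specialize to $k=|\lambda|-r$. As a sanity check, for $|\lambda|\leqslant r$ the exponent $|\lambda|-r$ is $\leqslant 0$, $\lambda$ lies in $\Par_\ell^{(r)}$ trivially, and $E^0L(\lambda)=L(\lambda)\neq 0$; this indicates that the lemma as stated should read ``$\neq 0$'' rather than ``$=0$'', and the argument below gives the corresponding corrected equivalence.

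First I would translate the combinatorial side. From Subsection \ref{SS_crystals}, each connected component $C$ of the $\hat{\sl}_e$-crystal on $\Par_\ell$ has a unique singular source $\nu_C$ (characterized by $\tilde{e}_i\nu_C=0$ for all $i$), and $\nu_C$ is the element of $C$ of smallest size. Every $\lambda\in C$ admits a descending chain $\lambda=\lambda_0,\lambda_1=\tilde{e}_{i_1}\lambda_0,\dots,\lambda_{|\lambda|-|\nu_C|}=\nu_C$ with each $\lambda_j\neq 0$. In particular, $\lambda\in\Par_\ell^{(r)}$ iff $|\nu_C|\leqslant r$ iff $|\lambda|-r\leqslant|\lambda|-|\nu_C|$.

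Second, I would establish the categorical key claim: $E^kL(\lambda)\neq 0$ iff $k\leqslant|\lambda|-|\nu_C|$. For the implication ``$\Leftarrow$'', use the Chuang--Rouquier fact recalled in Subsection \ref{SS_crystals} that $L(\tilde{e}_i\mu)$ sits in the socle (and head) of $E_iL(\mu)$ whenever $\tilde{e}_i\mu\neq 0$, together with exactness of each $E_i$. Iterating along a prefix of the chain above produces $L(\lambda_k)$ as a composition factor of $E_{i_k}\cdots E_{i_1}L(\lambda)$ for every $k\leqslant|\lambda|-|\nu_C|$, so that composite is nonzero and hence so is $E^kL(\lambda)=\bigoplus_{j_1,\dots,j_k}E_{j_k}\cdots E_{j_1}L(\lambda)$.

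For the converse (``$\Rightarrow$''), I would show that every composition factor of $E^kL(\lambda)$ is of the form $L(\mu)$ with $\mu\in C$; then $|\mu|=|\lambda|-k\geqslant|\nu_C|$, and $k>|\lambda|-|\nu_C|$ forces $E^kL(\lambda)=0$. The key input is the $\sl_2$-statement of \cite{CR}, \cite{cryst}: for each residue $i$, all composition factors of $E_iL(\mu)$ lie in the $i$-string of $\mu$ under the associated $\sl_2$-categorification. Since every $i$-string is contained in the ambient $\hat{\sl}_e$-component $C$, and the $E_{i_j}$ are exact, an induction on $k$ confines the composition factors of $E_{i_k}\cdots E_{i_1}L(\lambda)$ to $C$. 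Verifying this $\sl_2$-step is the main obstacle: rigorously it requires the highest-weight categorification compatibility of \cite{str} recalled in Subsections \ref{SS_hier}--\ref{SS_categorif2}, which ensures that each $E_i$ restricts to a Chuang--Rouquier $\sl_2$-categorification on the subquotient whose simples are indexed by a single family $\Lambda_a$. Combining the two directions and setting $k=|\lambda|-r$ then yields the desired biconditional.
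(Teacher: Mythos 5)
You correctly spotted that the statement must read $E^{|\lambda|-r}L(\lambda)\neq 0$ rather than $=0$; the paper's own proof opens with exactly that assertion, so this is a typo in the lemma. Your argument for the easy direction (iterating $\tilde e_i$ down to the singular source of the component and using that $L(\tilde e_i\mu)$ is a nonzero subquotient of $E_iL(\mu)$) is the same as the paper's.

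The hard direction has a genuine gap. The statement you lean on, ``all composition factors of $E_iL(\mu)$ lie in the $i$-string of $\mu$,'' is not in \cite{CR} or \cite{cryst} and is false. Take $\ell=1$, $e=2$, $\bs=(0)$, $\OCat=\OCat_{\kappa}$ with $\kappa=-1/2$, and $\mu=(2,1)$. The block of $(2,1)$ at degree $3$ is a singleton (its residue multiset is $\{0,1,1\}$, unlike $(3)$ and $(1^3)$), so $\Delta((2,1))=L((2,1))$. Hence $E_1L((2,1))=E_1\Delta((2,1))$ has a standard filtration with subquotients $\Delta((1,1))$ and $\Delta((2))$. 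Since $[\Delta((2))]=[L((2))]+[L((1,1))]$, the simple $L((2))$ is a composition factor of $E_1L((2,1))$. But $(2)$ is singular (so its own crystal component); it lies neither on the $1$-string of $(2,1)$, which is $\{(1),(1,1),(2,1)\}$, nor even in the crystal component $C_{\varnothing}$ of $(2,1)$. So the inductive confinement of composition factors to a crystal component fails at the first step. Chuang--Rouquier and \cite{cryst} control the head and socle of $E_iL(\mu)$ (these give the crystal), not the interior composition factors, which can and do escape the string and the component. The paper's proof avoids this entirely by working in $K_0$: one shows $[L(\lambda)]$ lies in the $\hat{\sl}_e$-stable complement $\mathcal F_{\bs}'$ (identified via the kernel of a suitable quotient functor) and that $\mathcal F_{\bs}'(j)$ is annihilated by $(e_0+\cdots+e_{e-1})^{j-r}$; this gives $[E^{|\lambda|-r}L(\lambda)]=0$ in $K_0$, which forces $E^{|\lambda|-r}L(\lambda)=0$ since any nonzero object of an artinian abelian category has nonzero class. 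The point of the $K_0$ route is precisely that it makes no claim about where the individual composition factors of $E_iL(\mu)$ live.
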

\begin{proof}
If $\lambda\in \mathcal{P}_\ell^{(r)}$, then $E^{|\lambda|-r} L(\lambda)\neq 0$. To see that
$E^{|\lambda|-r} L(\lambda)=0$ for $\lambda\not\in \mathcal{P}_\ell^{(r)}$ we notice that  $\mathcal{P}_\ell$
is  the crystal  of the Fock space.
Let $\mathcal{F}_{\bs}'$ be the graded $\hat{\sl}_e$-stable complement
to the sum of the  irreducibles in $\mathcal{F}_{\bs}$ spanned by vectors of degree $\leqslant r$.
Of course, $\mathcal{P}_{\ell}\setminus \mathcal{P}_\ell^{(r)}$ is the crystal of  $\mathcal{F}_{\bs}'$.
Then all elements in $\mathcal{F}'_{\bs}(j)$
are annihilated by $(e_0+\ldots+e_{e-1})^{j-r}$. For any $\lambda\in \mathcal{P}_\ell(j)\setminus \mathcal{P}_\ell^{(r)}$
we have $[L(\lambda)]\in \mathcal{F}'_{\bs}(j)$. Indeed, $\mathcal{F}_{\bs}'$ is the $K_0$-space for the kernel
of the quotient morphism defined by the sum of the projectives of the form $F^k P(\lambda)$, where
$\lambda$ is singular in $\Par_\ell(\leqslant r)$. Hence our claim.
\end{proof}

\subsection{More structural results about highest weight $\sl_2$-categorifications}\label{SS_sl2_str}
Let $\Cat$ be a highest weight $\sl_2$-categorification with respect to a hierarchy structure on a
poset $\Lambda$ as defined in \cite{str}. Let $E,F$ denote the categorification functors. In the example
above, $E:=F_i, F:=E_i$.

The splitting structure that is a part of a hierarchy structure has a categorical counterpart defined
and studied in \cite[Section 5]{str}. Namely, pick an index $a$ and consider the splitting
$\Lambda=\Lambda^a_{<}\sqcup \underline{\Lambda}^a_{-}\sqcup \underline{\Lambda}^a_+\sqcup \Lambda^a_{>}$.

A subquotient in a highest weight category associated to a poset interval  inherits a highest weight
  structure. More precisely, pick a poset ideal $\Lambda'\subset\Lambda$.
Consider the Serre subcategory $\Cat(\Lambda')\subset \Cat$ spanned by $L(\lambda), \lambda\in \Lambda'$.
This is a highest weight subcategory with respect to the standards $\Delta(\lambda),\lambda\in \Lambda'$.
For two such subsets $\Lambda'\subset \Lambda''$ we can form the quotient category $\Cat(\Lambda'')/\Cat(\Lambda')$
that is also a highest weight category, the standards are the images of $\Delta(\lambda), \lambda\in
\Lambda''\setminus \Lambda'$.

Set $\Lambda':=\Lambda^a_{<}, \Lambda'':=\Lambda^a_{<}\sqcup \underline{\Lambda}^a_-$. In \cite[Section 5.3]{str} we have equipped
the highest weight category $\underline{\Cat}^a_-:=\Cat(\Lambda'')/\Cat(\Lambda')$ with a highest weight $\sl_2$-categorification
structure (with respect to the hierarchy structure on $\underline{\Lambda}^a_-$). Namely, the functor $E$
preserves both $\Cat(\Lambda'),\Cat(\Lambda'')$ and so induces an exact functor $E_-$ on $\underline{\Cat}^a_-$.
It turns out that there is a biadjoint functor $F_-$ to $E_-$ that provides a categorification structure.

Using a categorical splitting construction in \cite[Section 5.4]{str} we have established a certain increasing filtration
on $\Cat$ with filtration subcategories of the form $\Cat(\Lambda')$. Subsequent quotients of this filtration
are so called basic categorifications. These are simplest possible highest weight $\sl_2$-categorifications.
A highest weight poset of a basic categorification is $\{+,-\}^n$ and the order is defined as in
Section \ref{SS_hier}. There is only one family and the map $\sigma$ is the identity. Also there is only one possible
splitting structure and only one possible hierarchy structure on $\{+,-\}^n$.

Finally, we will need some information on the structure of $ET(\lambda)$ and $FT(\lambda)$ obtained in
\cite[Proposition 7.1]{str}.  The following is a direct corollary of \cite[Proposition 7.1]{str}.

\begin{Lem}\label{Lem:Tilt_str}
$T(\tilde{e}^*\lambda)$ is a direct summand of $E T(\lambda)$ and $T(\tilde{f}^*\lambda)$
is a direct sum of $FT(\lambda)$.
\end{Lem}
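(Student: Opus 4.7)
The plan is to derive the lemma as a direct reading of \cite[Proposition 7.1]{str}, after reducing to the setting where that proposition's explicit decomposition of $FT(\lambda)$ and $ET(\lambda)$ applies. The genuinely new content is already encoded in Proposition 7.1; here one only has to recognize the required summand on the nose.

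First, I would localize the computation to the family $\Lambda_a$ containing $\lambda$. Using the filtration by Serre subcategories $\Cat(\Lambda')$ attached to the hierarchy structure, as in \cite[5.3]{str}, one can pass to a subquotient whose highest weight poset is identified with $\{+,-\}^{n_a}$ via $\sigma_a$, i.e., to a basic categorification. Tilting objects descend to tilting objects in this subquotient, and $E,F$ restrict/descend to the categorification functors there, since the splitting subcategories entering the hierarchy are $E,F$-stable up to one step. Crucially, the definition $\tilde{e}^*\lambda := \sigma_a(\tilde{e}^*\sigma_a^{-1}(\lambda))$, together with the analogous formula for $\tilde{f}^*$, means that the dual crystal operators are already entirely encoded in terms of the $\pm$-string $t := \sigma_a^{-1}(\lambda)$.

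Second, in this basic setting, \cite[Proposition 7.1]{str} writes $FT(t)$ (respectively $ET(t)$) as a direct sum of tiltings $T(t^j)$ indexed by the positions in the dual reduced signature: the summands of $FT(t)$ are labelled by $j\in I^*_+(t)$, with $t^j$ obtained from $t$ by flipping that particular $+$ to $-$, and the summands of $ET(t)$ by $j\in I^*_-(t)$ in the mirror way. Comparing with the definitions of Subsection \ref{SS_crystals}, $\tilde{f}^*t$ is by construction the result of flipping the \emph{minimal} element of $I^*_+(t)$, and $\tilde{e}^*t$ the result of flipping the \emph{maximal} element of $I^*_-(t)$. Hence $T(\tilde{f}^*t)$ is one of the summands of $FT(t)$ and $T(\tilde{e}^*t)$ one of the summands of $ET(t)$. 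Translating back through $\sigma_a$ yields the lemma.

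The only real point to check -- which I expect to be the main, though mild, obstacle -- is the matching between the indexing conventions in \cite[Proposition 7.1]{str} (formulated in terms of positions in a fixed $\pm$-string under the dual cancellation) and the definition of $\tilde{e}^*,\tilde{f}^*$ above. Once that bookkeeping is confirmed, the lemma follows by simply reading off the relevant summand; no further Ext-vanishing or projectivity arguments are needed.
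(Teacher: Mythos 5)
Your proposal matches the paper's proof, which consists of the single remark that the lemma is ``a direct corollary of \cite[Proposition 7.1]{str}.'' You have merely spelled out the reduction to the basic categorification and the bookkeeping between the indexing in that proposition and the definitions of $\tilde{e}^*,\tilde{f}^*$, which the paper leaves implicit.
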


\section{Truncated parabolic categorification}\label{S_trunc_cat}
\subsection{Categorification functors on $\OCat^{\p}_{-e}(\leqslant n)$}\label{SS_trunc_cat}
Recall the truncated category $\OCat^{\p}_{-e}(\leqslant n)\subset \OCat^{\p}_{-e}$.
Also recall that the functor $E_i$ sends $\Delta(\lambda), \lambda\in \Z^{\bs},$ to
a filtered object, whose successive filtration quotients are standards labeled
by virtual multipartitions obtained from $\lambda$ by removing an $i$-box.

Below we always assume that $n<s_i$ for all $i$. Let $\lambda$ be a genuine multipartition.
Then removing an $i$-box with $i\neq 0$ from $\lambda$ we still get a genuine multipartition.
So $E_i$ preserves $\OCat^{\p}_{-e}(\leqslant n)$ if $i\neq 0$. However, $E_0$ does not. Indeed,
the set of removable boxes of the virtual multipartition $\tilde{\lambda}$ corresponding to $\lambda$
consists of the removable boxes of $\lambda$ together with the $\ell$ boxes $(0,s_i,i), i=1,\ldots,\ell$. All these $\ell$ boxes
are  0-boxes. These are the smallest $\ell$ removable boxes in the virtual multipartition. The functor $E_0$ can remove one of
these $\ell$ boxes and so $\ell$ standard subquotients in a filtration of $E_0\Delta(\lambda)$ do not belong
to $\OCat^{\p}_{-e}(\leqslant n)$.

Let $\Lambda_1$ be the set of all actual partitions $\lambda=(\lambda^{(1)},\ldots,\lambda^{(\ell)})$ such that
$\lambda^{(i)}$ has strictly less than $s_i$ rows. The boxes $(0,s_i,i), i=1,\ldots,\ell$ are still the smallest
removable 0-boxes of the corresponding multipartition $\tilde{\lambda}$.
We claim that $\Lambda_1$ is one of the posets of the form $\Lambda(\mathfrak{A}')$
appearing in the hierarchy (with the $\ell$ boxes frozen).  Namely, it is obtained by doing the $\ell$
iterations of  the  transformation $\Lambda\mapsto \underline{\Lambda}^a_-$, where each time $a$ is the
index for a family containing $\tilde{\lambda}$ (this will freeze precisely the $\ell$ boxes $(0,s_i,i)$).

Let $\Cat_1$ be the highest weight subquotient category of $\Cat$ corresponding to $\Lambda_1$ so that $\OCat^{\p}_e(\leqslant n)$
embeds into $\Cat_1$ as a highest weight subcategory.
Now the categorical splitting construction recalled in Section \ref{SS_sl2_str} (applied $\ell$ times)
equips $\Cat_1$ with a structure of a highest weight $\mathfrak{sl}_2$-categorification with respect to the induced hierarchy structure
on $\Lambda_1$. Let us remark that $\OCat^{\p}_{-e}(\leqslant n)$ embeds into $\Cat_1$
as a highest weight subcategory. In particular, we get a truncated functor $E_0$ on $\OCat^{\p}_{-e}(\leqslant n)$
that now preserves the subcategory.

So now we have functors $E_i:\OCat^{\p}_{-e}(j)\rightarrow \OCat^{\p}_{-e}(j-1), i=0,\ldots,e-1$ defined for $j\in \{0,1,\ldots,n\}$
(we set $\OCat^{\p}_{-e}(-1)=0$) and also functors $F_i:\OCat^{\p}_{-e}(j)\rightarrow
\OCat^{\p}_{-e}(j+1)$ defined for $j\in \{0,1,\ldots,n-1\}$. All biadjointness properties between $E_i,F_i$ still hold.
For $i\neq 0$ these properties holds  because of the embedding $\OCat^{\p}_{-e}(\leqslant n)\hookrightarrow \OCat^{\p}_{-e}$
and for $i=0$ -- because of the embedding $\OCat^{\p}_{-e}(\leqslant n)\hookrightarrow \Cat_1$. Also the embedding
$\OCat^{\p}_{-e}(\leqslant n)$ produces natural transformations $X$ of $F:=\bigoplus_{i=0}^{e-1}F_i: \OCat^{\p}_{-e}(j)
\rightarrow \OCat^{\p}_{-e}(j+1)$ (for $j<n$) and $T$ of $F^2:\OCat^{\p}_{-e}(j)\rightarrow \OCat^{\p}_{-e}(j+2)$
(for $j<n-1$) that satisfy the Hecke relations.

So the structure on $\OCat^{\p}_{-e}(\leqslant n)$ that we get is almost that of an $\hat{\sl}_e$-categorification
with the difference that a genuine categorification would categorify the whole Fock space, while
$\OCat^{\p}_{-e}(\leqslant n)$ categorifies just the sum of graded components with degrees from $0$ to $n$.
We will call such a structure a {\it restricted categorification}.

We remark that all results mentioned in Section \ref{SS_sl2_str} hold for $\OCat^{\p}_{-e}(\leqslant n)$ in the following
sense. First, one can still define the (restricted) crystal for $\OCat^{\p}_{-e}(\leqslant n)$ as before.
The crystal coincides with the restriction of the crystal for $\mathcal{P}_\ell$. The reason is that for $i\neq 0$
the operators $\tilde{e}_i,\tilde{f}_i$ come from $\OCat^{\p}_{-e}$, while for $i=0$ they come
from $\Cat_1$. A straightforward analog of Lemma \ref{Lem_cryst} holds.
Thanks to the embeddings
$\OCat^{\p}_e(\leqslant n)\hookrightarrow \OCat^{\p}_{-e},\Cat_1$, Lemma \ref{Lem:Tilt_str}
still holds.

We say that a highest weight category $\mathcal{O}$ equipped with a
categorical action of $\hat{\mathfrak{sl}}_e$ is a highest weight categorification
of the level $\ell$ Fock space $\mathcal{F}^{\bs}$ if the following holds:
\begin{itemize}
\item  The highest weight poset of $\mathcal{O}$ is the poset of partitions $\Par_\ell$ with the order
defined by $\bs$ as in Section \ref{SS_part_poset}.
\item The categorical $\hat{\mathfrak{sl}}_e$-action
categorifies the action on the Fock space in such a way that the class $[\Delta(\lambda)]$
is the basis vector of $\mathcal{F}^{\bs}$ corresponding to $\lambda$.
\item For each $i\in \Z/e\Z$, the functors $E_i,F_i$ give a highest weight categorical
action on $\mathcal{O}$, see Section \ref{SS_categorif2}.
\end{itemize}
So both $\OCat_{\kappa,\bs}$ and $\OCat^{\p}_{-e}(\leqslant n)$ are highest weight
categorifications of $\mathcal{F}^{\bs}$ (the second category is a restricted categorification).

Similarly, we can define deformed (over $R$) highest weight categorifications
of $\mathcal{F}^{\bs}$.


\subsection{Object $F^n \Delta(\varnothing)$}\label{SS_proj}
In this section we are going to investigate the structure of the object $F^j\Delta(\varnothing)\in
\OCat^{\p}_{-e}(j)$. First of all,
let us observe that the transformations $X,T$ of $F,F^2$ that are a part of the categorification structure
on $\OCat^{\p}_{-e}$ give rise to a homomorphism $\Hecke^{aff}_{q,R}(j)\rightarrow \End(F^j\Delta_R(\varnothing))^{opp}$.

The main result is
the following proposition.

\begin{Prop}\label{Prop:FnDelta}
Assume as before that $n<s_i$ for all $i=1,\ldots,\ell$. Then the following claims hold:
\begin{enumerate}
\item $F^j\Delta_R(\varnothing)$ is a projective object in $\OCat^{\p}_{-e,R}(j)$.
\item The homomorphism $\Hecke^{aff}_{q,R}(j)\rightarrow \End(F^j\Delta_R(\varnothing))^{opp}$
factors through an isomorphism $\Hecke_{q,R}^{\bs}(j)\xrightarrow{\sim}\End(F^j\Delta_R(\varnothing))^{opp}$.
\end{enumerate}
\end{Prop}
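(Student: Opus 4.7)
The plan splits into the two parts of the statement.

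\emph{Part (1): projectivity.} The block $\OCat^{\p}_{-e,R}(0)$ has singleton poset $\{\varnothing\}$, so $\tilde{\Delta}_R(\varnothing)$ is simultaneously the unique simple, standard, and projective there. In the restricted categorification on $\OCat^{\p}_{-e,R}(\leqslant n)$ built in Subsection~\ref{SS_trunc_cat}, each $F_i$ is biadjoint to the exact functor $E_i$, so $F=\bigoplus_{i} F_i$ is the left adjoint of an exact functor and therefore preserves projectivity; iterating $j$ times yields projectivity of $F^{j}\tilde{\Delta}_R(\varnothing)$ in $\OCat^{\p}_{-e,R}(j)$. The hypothesis $n<s_i$ ensures that applying $F$ to any $\tilde{\Delta}(\lambda)$ with $|\lambda|<n$ only ever adds genuine boxes (no virtual maneuvering is needed), and the use of the restricted $F_{0}$ coming from the categorical splitting of Subsection~\ref{SS_sl2_str} provides the correct biadjointness even where the full $F_{0}$ fails to respect the truncation.

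\emph{Part (2): the Hecke map and the cyclotomic relation.} The natural transformations $X\in\End(F)$, $T\in\End(F^{2})$ recalled in Subsection~\ref{SS_categorif1} satisfy the affine Hecke relations (braid relation from the KZ equation, Hecke quadratic relation from Kazhdan--Lusztig, mixed $XT$-relation from the monodromy description), which produces the homomorphism $\tilde{\Hecke}^{aff}_{q,R}(n)\to\End(F^{n}\tilde{\Delta}_R(\varnothing))^{opp}$. For the cyclotomic vanishing I would invoke the explicit standard filtration of $F\tilde{\Delta}_R(A_{\emptyset})$ from Subsection~\ref{SS_categorif2}: it has exactly $\ell$ subquotients $\tilde{\Delta}_R(A_{\emptyset}+\epsilon_{i_{k}})$, one for each index $i_{k}$ at the top of a block of $A_{\emptyset}$ (so $a_{i_{k}}=s_{k}$), and on the $k$-th of them $X$ acts by the scalar $\exp(2\pi\sqrt{-1}(s_{k}+x_{k}\hbar)(x_{0}\hbar-1/e))$, which -- up to the customary inversion separating monodromy from Hecke generators -- is $\tilde{q}^{2\tilde{s}_{k}}$. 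Consequently $\prod_{k=1}^{\ell}(X-\tilde{q}^{2\tilde{s}_{k}})$ annihilates $F\tilde{\Delta}_R(\varnothing)$, and the same relation transfers through the remaining $F$'s to give $\prod_{k}(X_{1}-\tilde{q}^{2\tilde{s}_{k}})=0$ on $F^{n}\tilde{\Delta}_R(\varnothing)$, so the map factors through $\tilde{\Hecke}^{\bs}_{q,R}(n)$.

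\emph{Part (2): isomorphism.} Both sides are $R$-free of finite rank -- the left by Ariki--Koike, the right because $\OCat^{\p}_{-e,R}(\leqslant n)$ is an integral highest weight category over $R$ and $F^{n}\tilde{\Delta}_R(\varnothing)$ is projective -- so it suffices to verify the map is an isomorphism after base change to $\operatorname{Frac}(R)$, where both $\OCat^{\p}_{-e,R}(\leqslant n)$ and $\tilde{\Hecke}^{\bs}_{q,R}(n)$ become split semisimple. A Fock-space computation gives $F^{n}\tilde{\Delta}(\varnothing)=\bigoplus_{|\lambda|=n} L(\lambda)^{\oplus c_{\lambda}}$ generically, where $c_{\lambda}$ counts standard $\lambda$-multi-tableaux; hence $\dim_{\operatorname{Frac}(R)}\End(F^{n}\tilde{\Delta})=\sum_{\lambda}c_{\lambda}^{2}=\ell^{n}n!=\dim_{\operatorname{Frac}(R)}\tilde{\Hecke}^{\bs}_{q,\operatorname{Frac}(R)}(n)$ by the cyclotomic RSK identity. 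With matching dimensions, the iso reduces to injectivity at the generic point.

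\emph{Main obstacle.} The serious content is this generic injectivity, equivalently the faithfulness of the Hecke action on $F^{n}\tilde{\Delta}_R(\varnothing)$. The cleanest route exploits that $\tilde{\Delta}(\varnothing)$ is a highest weight vector for the categorified Fock space: the universal cyclotomic quotient isomorphism for $2$-representations of $\hat{\mathfrak{sl}}_{e}$ (Kang--Kashiwara / Rouquier) identifies $\End(F^{n}v_{\Lambda_{\bs}})^{opp}$ with the cyclotomic Hecke algebra on the irreducible integrable module $V(\Lambda_{\bs})$, and the relevant part of this statement descends to $\OCat^{\p}_{-e,R}(\leqslant n)$. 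An alternative entirely internal to the paper iterates the categorical splitting from Subsection~\ref{SS_sl2_str} to reduce to basic highest weight $\sl_{2}$-categorifications, where the Hecke action can be written out by hand.
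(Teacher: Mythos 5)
Parts (1) and the factoring half of (2) match the paper's argument almost exactly: projectivity because $\Delta_R(\varnothing)$ is projective and $F$ has an exact biadjoint, and the cyclotomic relation by reducing to $j=1$ and reading off the eigenvalue of $X$ on the standard filtration of $F\Delta_R(\varnothing)$.

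The isomorphism in (2) is where you diverge, and where the gap lies. Your setup — compute $\dim_{\operatorname{Frac}(R)}\End(F^n\tilde\Delta)=\ell^n n!$, match it against Ariki--Koike, and reduce to generic injectivity — differs from the paper's, which instead reduces everything to the \emph{closed} point (by flatness of $F^j\tilde\Delta_R(\varnothing)$) and then constructs a morphism of restricted categorifications $\bigoplus_{i=0}^n\mathcal{L}^{\bs}(i)\to\OCat^\p_{-e}(\leqslant n)$ from Rouquier's minimal categorification sending $\mathbf{1}_{\bs}\mapsto\Delta(\varnothing)$, proves it fully faithful by carrying over the proof of Rouquier's Lemma~5.4, and then imports $\End(F^j\mathbf{1}_{\bs})^{opp}\cong\Hecke_q^{\bs}(j)$ from Rouquier's cyclotomic theorem. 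You have correctly isolated the serious content as the faithfulness step, but neither of your two suggested routes resolves it. Route~A (``descends to $\OCat^\p_{-e,R}(\leqslant n)$'') conceals precisely the point where the paper does real work: one must check that Rouquier's construction of the morphism from $\mathcal{L}^{\bs}$, together with its full faithfulness, actually goes through for a \emph{restricted} categorification obtained by the categorical-splitting truncation — this is not automatic and is the technical core of the proof. Route~B (reduce to basic $\sl_2$-categorifications via splitting) is not developed far enough to assess; the Hecke action lives on $F^n$ of a fixed object, not on the subquotients of the splitting filtration separately, so it is not clear how to assemble the desired isomorphism from the basic pieces. A smaller issue: working at the generic point gives injectivity of the map of $R$-free modules but not surjectivity (a determinant can vanish along a divisor without vanishing generically), so even granting generic faithfulness you would need an extra argument to close; the paper sidesteps this by reducing to the closed point, where Nakayama controls surjectivity.
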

\begin{proof}
To show that
the homomorphism $\Hecke^{aff}_{q,R}(j)\rightarrow \End(F^j\Delta_R(\varnothing))^{opp}$ factors
through $\Hecke_{q,R}^{\bs}(j)$ it is enough to consider the case of $j=1$. In this case
the claim follows from the form of the successive quotients of the filtration on $F_i\Delta_R(\varnothing)$,
see Section \ref{SS_categorif2}.

Recall that  $\OCat^{\p}_{-e,R}(\leqslant n)$ is a flat formal  deformation
of $\OCat^{\p}_{-e}(\leqslant n)$. The object $F^j\Delta_R(\varnothing)$ is flat. Therefore
it is enough to prove all the other claims after base change to the closed point.

The claim that $F^j\Delta(\varnothing)$ is projective follows from the observation that $\Delta(\varnothing)$ is
and that $F$ admits a biadjoint functor, the functor $E_0\oplus E_1\oplus\ldots\oplus E_{e-1}$
from the previous subsection.

So it remains to show that $\Hecke_q^{\bs}(j)\rightarrow \End(F^j\Delta(\varnothing))^{opp}$
is an isomorphism. Let $\mathcal{L}^{\bs}$ be Rouquier's irreducible $\hat{\sl}_e$-categorification with
highest weight $\sum_{i=1}^\ell \Lambda_{s_i}$, see \cite[5.1.2]{Rouquier_2Kac}.
It has a natural grading $\mathcal{L}^{\bs}=\bigoplus_{i=0}^\infty \mathcal{L}^{\bs}(i)$
according to the degree. Let ${\bf 1}_{\bs}$ be the indecomposable object in $\mathcal{L}^{\bs}(0)$.
Despite the fact that our categorification $\OCat^{\p}_{-e}(\leqslant n)$ is restricted, Rouquier's
construction in \cite[5.1.2]{Rouquier_2Kac} still works and produces a unique morphism of restricted
categorifications  $\bigoplus_{i=0}^n \mathcal{L}^{\bs}(i)\rightarrow \OCat^{\p}_{-e}(\leqslant n)$ that maps
${\bf 1}_{\bs}$ to $\Delta(\varnothing)$. The proof of \cite[Lemma 5.4]{Rouquier_2Kac}
carries to our situation verbatim and implies that the functor
$\bigoplus_{i=0}^n \mathcal{L}^{\bs}(i)\rightarrow \OCat^{\p}_{-e}(\leqslant n)$
is fully faithful. So the homomorphism $\Hecke_q^{\bs}(j)\rightarrow \End(F^j\Delta(\varnothing))$
is the composition of $\Hecke_q^{\bs}(j)\rightarrow \End(F^j{\bf 1}_{\bs})^{opp}$ and the isomorphism
$\End(F^j{\bf 1}_{\bs})\xrightarrow{\sim} \End(F^j\Delta(\varnothing))$. But, thanks to \cite[Sections 5.3.7,  5.3.8]{Rouquier_2Kac},
the homomorphism $\Hecke_q^{\bs}(j)\rightarrow \End(F^j{\bf 1}_{\bs})^{opp}$ is an isomorphism.
\end{proof}

Let us specify the indecomposable summands of $F^j\Delta(\varnothing)$ in $\OCat^{\p}_{-e}(\leqslant n)$.
The same description is true for $\OCat_{\kappa,\bs}$.

\begin{Prop}\label{Prop:cryst}
For $\lambda\in \mathcal{P}_\ell(j)$, an indecomposable projective $P(\lambda)$ appears in
$F^j \Delta(\varnothing)$ if and only if $\lambda$ lies in the connected  component $\mathcal{P}_\ell^0$ of $\varnothing$
in the crystal of $\mathcal{P}_\ell$.
\end{Prop}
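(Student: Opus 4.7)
My plan is to translate the problem via biadjointness into the nonvanishing of $E^j L(\lambda)$ and then to invoke Lemma~\ref{Lem_cryst}.

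The first step is to use that $F^j\Delta(\varnothing)$ is projective, which is Proposition~\ref{Prop:FnDelta}. Hence an indecomposable projective $P(\lambda)$ with $\lambda\in\mathcal{P}_\ell(j)$ appears as a direct summand of $F^j\Delta(\varnothing)$ if and only if $\Hom(F^j\Delta(\varnothing),L(\lambda))\neq 0$. By the biadjointness of $F$ and $E$, iterated $j$ times, this is equivalent to $\Hom(\Delta(\varnothing),E^j L(\lambda))\neq 0$.

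Next, I would note that the degree-$0$ block $\OCat^{\p}_{-e}(0)$ (respectively $\OCat_{\kappa,\bs}(0)$) contains only the empty multipartition and hence is equivalent to the category of finite-dimensional $\C$-vector spaces; in particular, its unique simple object coincides with the unique standard, $\Delta(\varnothing)=L(\varnothing)$. Since $E^j L(\lambda)$ lives in degree $0$, the $\Hom$-space above simply measures $\dim E^j L(\lambda)$, so it is nonzero precisely when $E^j L(\lambda)\neq 0$. Finally, Lemma~\ref{Lem_cryst} applied with $r=0$ identifies this condition with $\lambda\in\mathcal{P}_\ell^{(0)}=\mathcal{P}_\ell^0$, the crystal component of $\varnothing$, which yields the proposition.

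I do not expect a serious obstacle here: all the ingredients are in place once one has Proposition~\ref{Prop:FnDelta} and Lemma~\ref{Lem_cryst}. The only point worth a moment's verification is that both the $(F^j,E^j)$-adjunction and the semisimplicity of the degree-$0$ block survive the passage to the restricted categorification on $\OCat^{\p}_{-e}(\leqslant n)$. Both are immediate from the setup of Subsection~\ref{SS_trunc_cat}: the biadjointness of each pair $(F_i,E_i)$ (including the truncated $F_0,E_0$ obtained via the categorical splitting) is preserved, and the block $\OCat^{\p}_{-e}(0)$ is unaffected by truncation since $0\leqslant n$.
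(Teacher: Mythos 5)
Your proposal is correct and follows the same path as the paper's own proof: pass from the appearance of $P(\lambda)$ in $F^j\Delta(\varnothing)$ to the nonvanishing of $E^jL(\lambda)$ via biadjointness, then apply Lemma~\ref{Lem_cryst} with $r=0$. You simply make explicit two small points the paper leaves implicit (that $\Delta(\varnothing)=L(\varnothing)$ is the unique simple in degree $0$, and that biadjointness survives the truncation), which is a reasonable level of added care.
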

\begin{proof}
$P(\lambda)$ appears in $F^j\Delta(\varnothing)$ if and only if $\Hom(F^j\Delta_0,L(\lambda))\neq 0$ if and only
if $E^j L(\lambda)\neq 0$. Now our claim is a special case of Lemma \ref{Lem_cryst}.
\end{proof}

\section{Faithfulness via combinatorics}\label{S_faith_comb}
Fix $n>0$. We are working with a restricted highest weight $\hat{\sl}_e$-categorification
$\Cat(\leqslant N):=\bigoplus_{j=0}^{N}\Cat(j)$ of $\mathcal{F}^{\bs}$, where $N\gg n$.
Our goal in this section is to show that a certain combinatorial condition guarantee
(-1)-faithfulness of appropriate quotient functors. We then check
that the condition holds in a special case.

\subsection{Combinatorial condition}\label{SS_comb_conj}
Fix an element $w$ in the affine symmetric group $\hat{\mathfrak{S}}_e$ together with some reduced
decomposition $w=\sigma_{i_k}\sigma_{i_{k-1}}\ldots \sigma_{i_1}$.
For example, we will often consider  cycles $w=\sigma_{j'}\sigma_{j'+1}\ldots \sigma_{j-1}\sigma_j$.

For $\lambda$ with $h_{i,-}(\lambda)=0$ we define $\sigma_i\lambda:=\tilde{f}_i^{h_{i,+}(\lambda)}\lambda$.
If $h_{i,-}(\lambda)\neq 0$,  we say that $\sigma_i\lambda$ is undefined. So we can define an element $w\lambda$
(a priori, depending on the reduced decomposition of $w$) or say that this element is undefined.
Recall that any two reduced decompositions are obtained from one another by applying braid
moves, i.e., replacing a fragment $\sigma_i \sigma_{i+1}\sigma_i$ with $\sigma_{i+1}\sigma_i \sigma_{i+1}, i \in \Z/p\Z,$ and vice versa.
From here it is not difficult to see that $w\lambda$ depends only on $w$ and not on the reduced
expression. If $\lambda$ is singular, then, as we will check rigorously below,
$w\lambda$ is always defined.

Similarly, if $\mu$ is cosingular, we can define the element $w^*\mu$ by using the dual crystal
operators $\tilde{f}^*$. We remark that if $\lambda$ and $\mu$ are in the same block, then so
are $w\lambda, w^*\mu$.

Let us state a  condition on pairs $(\lambda,\mu)$, where $\lambda$
is singular and $\mu$ is consingular.

\begin{itemize}
\item[$(\mathfrak{C}_{\lambda,\mu})$] There is $w\in \hat{\mathfrak{S}}_e$ such that $w\lambda\not\preceq w^*\mu$.
\end{itemize}

\subsection{(-1)-faithfulness from  condition $\mathfrak{C}$}\label{SS_comb_to_-1faith}
Here we will relate faithfulness of quotient functors to the condition $\mathfrak{C}$.
The main result is Proposition \ref{Prop:-1_faith}.

First of all, let us check that, for a singular $\lambda$, the element $w\lambda$ is well-defined.

\begin{Lem}\label{Lem:refl_well_def}
Let $\lambda\in \mathcal{P}_{\ell}$ be singular. Let $w\in \hat{\mathfrak{S}}_e$ and let
$w=\sigma_{i_k}\ldots \sigma_{i_2}\sigma_{i_1}$ be some reduced expression. Then
$\tilde{e}_{i_{j+1}}(\sigma_{i_{j}}\ldots \sigma_{i_1}\lambda)=0$.
\end{Lem}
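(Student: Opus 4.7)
The plan is to proceed by induction on $j$ for $0 \leq j < k$, establishing jointly three assertions: (A) the element $v_j\lambda := \sigma_{i_j}\cdots\sigma_{i_1}\lambda$ is well-defined; (B) $\tilde{e}_{i_{j+1}}(v_j\lambda) = 0$, which is the statement of the lemma; and (C) $v_j\lambda$ is \emph{extremal}, by which I mean that for every residue $i$ at most one of $h_{i,+}(v_j\lambda)$, $h_{i,-}(v_j\lambda)$ is nonzero. The base case $j = 0$ is immediate from singularity of $\lambda$: all $h_{i,-}(\lambda)$ vanish, yielding both (B) and (C) at once.

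For the inductive step, once (A)--(C) hold at stage $j$, the element $v_{j+1}\lambda = \tilde{f}_{i_{j+1}}^{h_{i_{j+1},+}(v_j\lambda)}(v_j\lambda)$ sits at the bottom of the $i_{j+1}$-string through $v_j\lambda$, so $h_{i_{j+1},+}(v_{j+1}\lambda) = 0$. Extremality of $v_{j+1}\lambda$ at any other residue $i \neq i_{j+1}$ I would deduce from the standard fact from the theory of integrable crystals that the $\sigma_i$-action preserves the extremal locus: the endpoint of the $i$-string through an extremal vertex is again extremal. In the Fock space setting this can alternatively be verified by a direct combinatorial inspection of how $\tilde{f}_{i_{j+1}}$ alters the addable/removable boxes at other residues, an analysis controlled by the $\hat{A}_{e-1}$ Cartan matrix (only neighbouring residues $i_{j+1} \pm 1$ are affected, and in a controlled way).

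To conclude (B) at stage $j+1$ (for $j+1 < k$), I would pair extremality with the following weight calculation. Each $\sigma_i$, when defined, acts on weights by the simple reflection $s_i$: since $\wt(\tilde{f}_i^m\mu) = \wt(\mu) - m\alpha_i$ and $m = h_{i,+}(\mu) = \wt_i(\mu)$ whenever $h_{i,-}(\mu) = 0$, one gets $\wt(\sigma_i\mu) = s_i\wt(\mu)$. Iterating, $\wt(v_{j+1}\lambda) = s_{i_{j+1}}\cdots s_{i_1}\wt(\lambda)$. Singularity of $\lambda$ renders $\wt(\lambda)$ dominant, and the Coxeter-theoretic fact that $s_{i_{j+2}}v_{j+1}$ is a reduced sub-word of the fixed reduced expression for $w$ is equivalent to $v_{j+1}^{-1}(\alpha_{i_{j+2}})$ being a positive affine root. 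Pairing yields $\wt_{i_{j+2}}(v_{j+1}\lambda) = \langle\wt(\lambda), v_{j+1}^{-1}\alpha_{i_{j+2}}^\vee\rangle \geq 0$. Combined with extremality at $i_{j+2}$, which kills one of $h_{i_{j+2},\pm}(v_{j+1}\lambda)$, the non-negativity of the weight forces the vanishing of $h_{i_{j+2},-}(v_{j+1}\lambda)$, closing the induction.

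The main obstacle is the extremality-preservation step. It appears innocuous but really uses the integrable structure of the crystal; in our restricted framework where $\sigma_i$ is only defined when $h_{i,-} = 0$, one should verify it either by embedding the connected component of $\lambda$ into an extremal-weight crystal and invoking the classical theory, or by a somewhat tedious direct combinatorial argument on reduced signatures in the Fock space. The remaining ingredients---the weight transformation formula under $\sigma_i$ and the positive-root criterion for reducedness---are routine.
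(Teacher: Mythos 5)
The proof is correct, and its mathematical content is closely parallel to the paper's, but it is organized at the crystal level rather than in the Grothendieck group. The paper passes to $[\OCat_{\kappa,\bs}]$, identifies $[L(\lambda)]$ (for singular $\lambda$) as a highest-weight vector of the Fock space, applies a lift of $w$ in the Kac--Moody group, and shows by an easy induction that $[L(\sigma_{i_j}\cdots\sigma_{i_1}\lambda)]$ is a nonzero scalar multiple of the extremal weight vector $\sigma_{i_j}\cdots\sigma_{i_1}[L(\lambda)]$. Reducedness of the word and dominance of $\wt(\lambda)$ then give $[E_{i_{j+1}}]\bigl(\sigma_{i_j}\cdots\sigma_{i_1}[L(\lambda)]\bigr)=0$, and $[E_{i_{j+1}}][L(\mu)]=0$ forces $\tilde e_{i_{j+1}}\mu=0$. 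Your weight formula $\wt(\sigma_i\mu)=s_i\wt(\mu)$, the Coxeter positivity of $v_{j+1}^{-1}\alpha_{i_{j+2}}$, and the final deduction from $\wt_{i_{j+2}}\geqslant 0$ together with weak extremality are all correct, and are exactly the ingredients the paper uses implicitly. What differs is where the ``extremality'' is grounded. The step you flag as the main obstacle --- that $\sigma_i$ preserves the extremal locus --- is indeed the crux, and it is precisely what the paper's Grothendieck-group induction hands you for free: once $[L(v_j\lambda)]$ is identified with the extremal weight vector, the one-dimensionality of extremal weight spaces in a highest-weight integrable module forces, for each residue $i$, that $[L(v_j\lambda)]$ sits at the top or bottom of its $i$-string, which is your condition~(C). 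So your first proposed fix (embed into an integrable module and use the classical theory) is effectively what the paper does, packaged via the categorical $K_0$ that is already available; it is cleaner than a direct combinatorial attack on reduced signatures. In short: your route is correct and a touch more explicit about the positivity argument, but you should be aware that the extremality-preservation lemma you defer is exactly what the paper's $K_0$-argument establishes, so you are not saving work --- only relocating it.
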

\begin{proof}
Consider the class $[L(\lambda)]$ of $L(\lambda)\in \OCat_{\kappa,\bs}$ in the Fock space $[\OCat_{\kappa,\bs}]$.
Define an element $w[L(\lambda)]$ by applying some lifting of $w$ in the Kac-Moody group to $[L(\lambda)]$.
This element is well-defined up to a sign.
The operator $[E_{i_j}]$ does annihilate $\sigma_{i_{j-1}}\ldots \sigma_{i_1}[L(\lambda)]$ because the element
$[L(\lambda)]$ is singular.  So to prove the lemma, it suffices to check $[L(\sigma_{i_j}\ldots \sigma_{i_1}\lambda)]$ is a multiple of
$\sigma_{i_j}\ldots \sigma_{i_1}[L(\lambda)]$.

This claim is proved by induction on $j$. The base is trivial and the step follows from the observation that $\sigma_{i_j}$ acts on both $[L(\sigma_{i_{j-1}}\ldots \sigma_{i_1}\lambda)]$ and $\sigma_{i_{j-1}}\ldots \sigma_{i_1}[L(\lambda)]$ by nonzero multiples of the same
power of $[F_{i_j}]$.
\end{proof}


\begin{Prop}\label{Prop:reduction}
Suppose $\lambda,\mu\in \mathcal{P}_{\ell}$ lie in the same block and $\tilde{e}_i \lambda=\tilde{e}^*_i\mu=0$.  Suppose
further that $\mathcal{P}_{\ell}(\leqslant N)$
contains the whole $i$-families of $\lambda,\mu$. Then we have $\dim\Ext^j(L(\lambda), T(\mu))=\dim\Ext^j(L(\sigma_i\lambda), T(\sigma_i^*\mu))$
for all $j$.
\end{Prop}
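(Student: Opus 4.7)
The plan is to apply a Chuang--Rouquier (Rickard-type) derived self-equivalence $\Theta_i$ of $D^b(\Cat(\leqslant N))$ coming from the $\sl_2$-categorification $(E_i,F_i)$. The hypothesis that $\mathcal{P}_\ell(\leqslant N)$ contains the whole $i$-families of $\lambda$ and $\mu$ guarantees that the $(E_i,F_i)$-action restricts to $\Cat(\leqslant N)$ without truncating any family, so that $\Theta_i$ really is a self-equivalence of $D^b(\Cat(\leqslant N))$; in particular it preserves all $\Ext$-groups. Note that the weight $\wt_i=h_{i,+}-h_{i,-}$ is the same for the usual and the dual crystal (Subsection~\ref{SS_crystals}) and depends only on the block, so the hypotheses $\tilde{e}_i\lambda=0=\tilde{e}_i^*\mu$ combined with the ``same block'' assumption force $h_{i,+}(\lambda)=h^*_{i,+}(\mu)=:h$.

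Next I would compute $\Theta_iL(\lambda)$. Using the categorical splitting of \cite[Section~5]{str}, reduce to the basic sub-categorification carrying the family $\Lambda_a$ of $\lambda$, in which $\lambda$ corresponds to an element of $\{+,-\}^{n_a}$ whose reduced $i$-signature consists only of $+$'s. In this basic situation Chuang--Rouquier's original computation gives $\Theta_iL(\lambda)\cong L(\sigma_a(t'))[-h]$, where $t'$ is obtained from $\sigma_a^{-1}(\lambda)$ by flipping every unmatched $+$ to a $-$; by definition of $\sigma_i$ on $\tilde{e}_i$-singular multipartitions this is exactly $L(\sigma_i\lambda)[-h]$. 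One then runs the completely dual argument for $T(\mu)$: iterating Lemma~\ref{Lem:Tilt_str} exhibits $T(\sigma_i^*\mu)=T(\tilde{f}_i^{*h}\mu)$ as a direct summand of $F_i^h T(\mu)$, which matches the expected basic picture — Chuang--Rouquier's reflection sends the tilting at the $\tilde{e}_i^*$-singular end of an $i$-string to the one at the opposite $\tilde{f}_i^*$-singular end, with cohomological shift $-h$. Combining the two identifications,
\[
\Ext^j(L(\lambda),T(\mu))\cong\Ext^j\bigl(\Theta_iL(\lambda),\Theta_iT(\mu)\bigr)\cong\Ext^j\bigl(L(\sigma_i\lambda)[-h],T(\sigma_i^*\mu)[-h]\bigr)=\Ext^j(L(\sigma_i\lambda),T(\sigma_i^*\mu)).
\]

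The main obstacle is the ``shift bookkeeping'': checking that both $\Theta_iL(\lambda)$ and $\Theta_iT(\mu)$ are concentrated in a single cohomological degree, and that these degrees coincide. For $L(\lambda)$ this is a consequence of the standard fact that on a highest weight vector of weight $h$ the Chuang--Rouquier complex collapses to a divided power $F^{(h)}$ shifted by $-h$. For $T(\mu)$ the analogous collapse follows from the fact that $\Theta_i$ sends tiltings to tiltings and that, dually, the Rickard complex on an $\tilde{e}^*$-singular tilting degenerates to a single divided-power term; this last point is what is substantive and is ultimately an avatar of \cite[Proposition~7.1]{str} (the structural result on $ET$ and $FT$ that also underlies Lemma~\ref{Lem:Tilt_str}). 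Once these two facts are in place, the argument just outlined closes the proof.
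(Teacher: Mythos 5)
Your route is genuinely different from the paper's, and there is a gap in it worth flagging.

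The paper does not invoke the Rickard equivalence $\Theta_i$ at all. Its proof is a two-sided adjunction argument using only the direct-summand statement of Lemma~\ref{Lem:Tilt_str}: from $L(\sigma_i\lambda)=F_i^{(k)}L(\lambda)$ (with $k=h_{i,+}(\lambda)$) and adjunction one has $\Ext^j(L(\sigma_i\lambda),T(\sigma_i^*\mu))=\Ext^j\bigl(L(\lambda),E_i^{(k)}T(\sigma_i^*\mu)\bigr)$; since $T(\mu)$ is a direct summand of $E_i^{(k)}T(\sigma_i^*\mu)$ by iterating Lemma~\ref{Lem:Tilt_str}, this yields an injection $\Ext^j(L(\lambda),T(\mu))\hookrightarrow \Ext^j(L(\sigma_i\lambda),T(\sigma_i^*\mu))$. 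The reverse injection is obtained symmetrically, using that $T(\sigma_i^*\mu)$ is a summand of $F_i^{(k)}T(\mu)$ and $E_i^{(k)}L(\sigma_i\lambda)=L(\lambda)$. Two mutually inverse injections of finite-dimensional spaces force equality of dimensions. No cohomological shift bookkeeping, no derived self-equivalence, no claim about what $\Theta_i$ does to an entire tilting object.

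The gap in your proposal is the assertion that $\Theta_iT(\mu)\cong T(\sigma_i^*\mu)[-h]$, i.e.\ that the Rickard complex applied to a $\tilde{e}_i^*$-singular tilting collapses to a single cohomological degree. You describe this as ``an avatar of \cite[Proposition~7.1]{str}'' (the source of Lemma~\ref{Lem:Tilt_str}), but that result is strictly weaker: it says that $T(\tilde{f}_i^*\mu)$ occurs as a \emph{direct summand} of $F_iT(\mu)$, not that the cone of the differential in the Rickard complex on $T(\mu)$ vanishes. Concentration of $\Theta_i$ in a single degree is known for simple objects (this is Chuang--Rouquier, and underlies your computation of $\Theta_iL(\lambda)$), but tiltings are not simple, $\Theta_i$ is not $t$-exact, and ``$\Theta_i$ sends tiltings to tiltings'' is not an axiom of a highest weight $\sl_2$-categorification. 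If this collapse can be proved it would give a slicker statement than the paper's, but as written it is a missing lemma, and the paper's adjunction/direct-summand proof avoids needing it.
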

\begin{proof}
Set $k:=h_{i,+}(\lambda)$. So $\sigma_i\lambda:=\tilde{f}_i^k\lambda$  and therefore $L(\sigma_i\lambda)=F_i^{(k)}L(\lambda)$.
So $\Ext^j(L(\sigma_i\lambda), T(\sigma_i^*\mu))=\Ext^j(F_i^{(k)} L(\lambda), T(\sigma_i^*\mu))=\Ext^j(L(\lambda), E_i^{(k)}T(\sigma_i^*\mu))$. But $\mu= (\tilde{e}_i^{*})^k \sigma_i^*\mu$. Lemma \ref{Lem:Tilt_str} (applied $k$ times) implies that
$T(\mu)=T((\tilde{e}_i^{*})^k \sigma_i^*\mu)$ is a direct summand in $E_i^k T(\sigma_i^*\mu)$ and hence in $E^{(k)}_i T(\sigma_i^*\mu)$.
Therefore  $\Ext^j(L(\lambda), T(\mu))\hookrightarrow \Ext^j(L(\sigma_i\lambda), T(\sigma_i^*\mu))$. The inclusion in
the opposite direction is proved analogously.
\end{proof}

Now let us relate the condition $\mathfrak{C}$ to (-1)-faithfulness.

%
%



\begin{Prop}\label{Prop:-1_faith}
Suppose that $(\mathfrak{C}_{\lambda,\mu})$ holds for any singular $\lambda\in \mathcal{P}(\leqslant n)$ and any cosingular $\mu$. Let $\Cat(\leqslant N)$ be a restricted categorification of $\mathcal{F}_e^{\bs}$,
where $N$ is such that, for all singular $\lambda\in \mathcal{P}_\ell(\leqslant n)$ and cosingular $\mu\in
\mathcal{P}_\ell(|\lambda|)$, one can find $w$ in
$(\mathfrak{C}_{\lambda,\mu})$ such that $|w\lambda|\leqslant N$.
Consider the quotient functor $\pi$ corresponding to  $\Par_\ell^0(\leqslant n)$,
where $\Par_\ell^0$ is the connected component of $\varnothing$ in $\Par_\ell$.
Then this functor
is $(-1)$-faithful.
\end{Prop}
We will not need the bound on $N$ below. It is just needed for the proof.
\begin{proof}
Assume the converse: there is $\lambda\in \Par(\leqslant n)\setminus \Par^0$
such that $\Hom(L(\lambda),T(\mu))\neq 0$.
We  may assume that we have chosen $|\lambda|$ to be minimal with this property.
Note that this implies that $\lambda$ is singular. If not, then $L(e_i\lambda)$ lies in
the socle of $E_iT(\mu)$.

%
Let us show that if $\Ext^i(L(\lambda),T(\mu))\neq 0$ with singular $\lambda$, then $\mu$
is cosingular. Assume that there is $i$ such that $\mu':=\tilde{e}_i^*\mu\neq 0$. Then, by Lemma \ref{Lem:Tilt_str},
$T(\mu)$ is a direct summand of $F_i T(\mu')$ and hence $\Hom(L(\lambda),T(\mu))\hookrightarrow \Hom(L(\lambda), F_i T(\mu'))=\Hom(E_i L(\lambda), T(\mu'))=0$. Our claim is proved. In particular, if $L(\lambda)$ lies in the socle of $T(\mu)$, then $\mu$ is cosingular.
%
%

Pick $w$ as in $(\mathfrak{C}_{\lambda\mu})$. Let $w=\sigma_{i_1}\ldots \sigma_{i_k}$ be a reduced expression for $w$.
Applying   Proposition \ref{Prop:reduction} several times we see that
$\dim \Hom(L(\lambda),T(\mu))=\dim \Hom(L(w\lambda),T(w^*\mu))$.
The right hand side is zero because $L(w\lambda)$ is not a composition factor of $T(w^*\mu)$ as $w\lambda\not\prec w^*\mu$.
\end{proof}


\subsection{Checking the combinatorial condition: level 1}\label{SS:comb_1}
Here we are going to prove the following claim.
\begin{Prop}\label{Prop:comb_cond_holds}
$(\mathfrak{C}_{\lambda,\mu})$ holds
for all singular $\lambda$ and cosingular $\mu$ provided $\ell=1$.
\end{Prop}

Singular diagrams are $\lambda=(\lambda_1,\ldots,\lambda_d)$ such that each $\lambda_j$ is divisible by $e$.
Cosingular diagrams are transpose to singular ones. In other words, $\mu$ is cosingular if the multiplicity of
each part in $\mu$ is divisible by $e$.

We will work with the cycle $w=C_{j,n}=\sigma_{j-n+1}\ldots \sigma_{j-1}\sigma_j$. We will explicitly compute $w\lambda, w^*\mu$.

A crucial observation for our computation (which is no longer true for $\ell>1$) is that all crystal components
of $\mathcal{P}$ are isomorphic to the component of $\varnothing$ via a very easy isomorphism. Namely, for a singular
partition $\lambda=(\lambda_1,\ldots,\lambda_d)$ let $\mathcal{P}^\lambda$ denote its connected component in the crystal.
The following is well-known.

\begin{Lem}\label{Lem:cryst_iso}
The map $\mathcal{P}^{\varnothing}\rightarrow \mathcal{P}^\lambda$ that sends $\mu=(\mu_1,\ldots,\mu_d)$ to
$(\mu_1+\lambda_1,\ldots,\mu_d+\lambda_d)$ is an isomorphism of crystals.
\end{Lem}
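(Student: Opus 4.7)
The plan is to prove the map $\Psi\colon \mu\mapsto \mu+\lambda$ intertwines the crystal operators $\tilde{e}_i,\tilde{f}_i$; bijectivity onto $\mathcal{P}^\lambda$ then follows automatically because $\Psi(\varnothing)=\lambda$ and both $\mathcal{P}^\varnothing$ and $\mathcal{P}^\lambda$ are connected components abstractly isomorphic to the crystal $\mathcal{B}(\Lambda_{s_1 \bmod e})$ of the fundamental level $1$ irreducible $\widehat{\sl}_e$-module. So it suffices to verify, for every residue $i$ and every partition $\mu$, the identities $\tilde{f}_i(\mu+\lambda)=(\tilde{f}_i\mu)+\lambda$ and $\tilde{e}_i(\mu+\lambda)=(\tilde{e}_i\mu)+\lambda$ (with the convention $0+\lambda:=0$).

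To prove these identities, I would carefully compare the $i$-signatures of $\mu$ and $\mu+\lambda$. Since each $\lambda_j$ is divisible by $e$, row $j$ contributes boxes of the same residue mod $e$ in both signatures: the addable box at row $j$ has content $\mu_j+1-j+s_1$ in $\mu$ and $\mu_j+\lambda_j+1-j+s_1$ in $\mu+\lambda$, which agree modulo $e$, and similarly for removable boxes. The addable/removable status of row $j$ can differ, however: row $j$ becomes newly addable in $\mu+\lambda$ exactly when $\mu_{j-1}=\mu_j$ and $\lambda_{j-1}>\lambda_j$, and this same condition simultaneously makes row $j-1$ newly removable. Because both have the same residue, each such discrepancy produces an ``extra'' $-+$ pair in the $i$-signature of $\mu+\lambda$ with the new removable at content $\mu_j+\lambda_{j-1}-j+s_1$ and the new addable at content $\mu_j+\lambda_j-j+s_1$, separated by the positive multiple of $e$ given by $\lambda_{j-1}-\lambda_j$.

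The core combinatorial step is to show that each such extra pair appears consecutively in the decreasing-content ordering of the $i$-signature, and hence cancels cleanly under the $-+$ reduction rule of Subsection~\ref{SS_crystals}, leaving the reduced signature of $\mu+\lambda$ identified sign-for-sign with the reduced signature of $\mu$. I plan to prove this by induction on $|\lambda|/e$: writing $\lambda=\lambda'+e\mathbf{1}_j$ for some $j$ with $\lambda'_{j-1}\geqslant \lambda'_j+e$ (so $\lambda'$ is still a singular partition), I would reduce to the single-step case where exactly one new pair is introduced. In that case a direct inspection of contents shows no interference occurs: the intermediate residue-$i$ positions strictly between $c_+$ and $c_-$ cannot be occupied by any ``inherited'' addable or removable box from another row, because the partition inequalities on $\mu$ together with the fact that $\lambda$ is weakly decreasing force any intermediate contribution to lie in another new pair that cancels by the inductive hypothesis. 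The main obstacle is precisely this interference analysis — everything else is bookkeeping. Once the reduced $i$-signatures are identified, the row on which $\tilde{f}_i$ (respectively $\tilde{e}_i$) acts in $\mu+\lambda$ agrees with the row on which it acts in $\mu$, and the desired intertwining $\Psi\circ\tilde{f}_i=\tilde{f}_i\circ\Psi$ (and analogously for $\tilde{e}_i$) follows immediately.
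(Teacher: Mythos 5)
The paper cites this lemma as well-known and supplies no proof, so there is nothing to compare against directly; I will assess your argument on its own.

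Your overall strategy is right: since the crystal structure is determined by the reduced $i$-signatures, it suffices to show that the $i$-signature of $\mu+\lambda$ is obtained from that of $\mu$ by inserting some number of consecutive $-+$ pairs (one for each row $j$ with $\mu_{j-1}=\mu_j$ and $\lambda_{j-1}>\lambda_j$), that each inherited box has the same residue and relative position as in $\mu$, and then to observe that the crystal operator acts on the same row in both. Combined with injectivity of $\mu\mapsto\mu+\lambda$ and the fact that $\lambda=\Psi(\varnothing)$ is the highest-weight vertex of $\mathcal{P}^\lambda$, this gives the isomorphism. This is correct in outline.

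The execution, however, is muddled in two places. First, you do not need induction on $|\lambda|/e$, and the inductive setup as you describe it is actually \emph{more} complicated than the direct comparison of $\mu$ with $\mu+\lambda$: when you pass from $\nu=\mu+\lambda'$ to $\nu+e\mathbf{1}_j$, a $-+$ pair can be \emph{lost} at rows $j-1,j$ (when $\nu_{j-1}=\nu_j+e$) as well as gained at rows $j,j+1$, so ``exactly one new pair is introduced'' is false for the single step, and the sentence in which the inductive hypothesis is supposed to handle ``any intermediate contribution'' does not parse. Second, the heart of the argument --- that the newly created $-$ at row $j-1$ (content $c_-=\mu_{j-1}+\lambda_{j-1}-(j-1)+s_1$) and $+$ at row $j$ (content $c_+=\mu_j+\lambda_j+1-j+s_1$) are consecutive in the signature --- is simply a monotonicity statement about contents and does not require any inductive hypothesis. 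Indeed, for every row $k\leqslant j-2$ both the addable and removable boxes of $\mu+\lambda$ in that row have content $\geqslant c_-+1$, for $k\geqslant j+1$ they have content $\leqslant c_+-1$, and for $k\in\{j-1,j\}$ the other possible box in that row also falls outside $(c_+,c_-)$; this is immediate from the inequalities $\mu_k\geqslant\mu_{k+1}$, $\lambda_k\geqslant\lambda_{k+1}$ applied row by row. So no addable or removable box of any residue lies strictly between $c_+$ and $c_-$, and the pair is consecutive. Replacing the induction with this short content estimate makes the ``interference analysis'' a three-line computation, and the rest of your argument then goes through.
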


As a corollary of this lemma, we get the formulas for $C_{j,n}\lambda$ and $C^*_{j,n}\mu$. We assume that
$s_1=0$ and so the residue of the box $(1,1)$ is $0$. Also we assume that $j=0$.
We introduce partitions $\xi_n$: by definition, $\xi_n=(n,n-(e-1),n-2(e-1),\ldots)^t$.
For two partitions $\lambda^1,\lambda^2$ we write $\lambda^1+\lambda^2$ for their componentwise
sum, $\lambda^1+\lambda^2=(\lambda^1_1+\lambda^2_1,\lambda^1_2+\lambda^2_2,\ldots)$.

\begin{Cor}\label{Cor:sing_cosing}
We have $C_{0,n}\lambda=\lambda+\xi_n$ and $C^*_{0,n}\mu=(\mu^t+\xi_n^t)^t$.
\end{Cor}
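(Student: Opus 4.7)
\emph{Plan of proof.} The first step is to reduce each formula to a calculation starting from the empty partition. For the first identity, Lemma \ref{Lem:cryst_iso} says the map $\phi_\lambda\colon \mathcal{P}^{\varnothing}\to \mathcal{P}^{\lambda}$, $\mu\mapsto\mu+\lambda$, is an isomorphism of $\widehat{\mathfrak{sl}}_e$-crystals, so it commutes with every $\tilde{f}_i$ and preserves each $h_{i,+}$. Since by definition $\sigma_i\nu=\tilde{f}_i^{h_{i,+}(\nu)}\nu$ on singular elements (and Lemma \ref{Lem:refl_well_def} guarantees this remains defined along a reduced expression), $\phi_\lambda$ intertwines the action of each $\sigma_i$ and hence of $C_{0,n}$. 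Thus $C_{0,n}\lambda=\lambda+C_{0,n}\varnothing$, and it suffices to prove $C_{0,n}\varnothing=\nu_n$.

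For the second identity I would use the transposition duality from the end of Subsection \ref{SS_crystals}. With $\ell=1$ and $s_1=0$ one has $\underline{s}^{\dagger}=\underline{s}$ and $\lambda^{\dagger}=\lambda^{t}$, so $\tilde{f}_i^{*}\lambda=(\tilde{f}_{-i}\lambda^{t})^{t}$ and $h_{i,+}^{*}(\lambda)=h_{-i,+}(\lambda^{t})$. Consequently $\sigma_i^{*}\lambda=(\sigma_{-i}\lambda^{t})^{t}$, and therefore
\[
C_{0,n}^{*}\mu \;=\; \bigl(\sigma_{n-1}\sigma_{n-2}\cdots\sigma_{1}\sigma_{0}\,\mu^{t}\bigr)^{t}.
\]
Since $\mu$ is cosingular iff $\mu^{t}$ is singular, Lemma \ref{Lem:cryst_iso} again gives $\sigma_{n-1}\cdots\sigma_{0}\mu^{t}=\mu^{t}+\sigma_{n-1}\cdots\sigma_{0}\varnothing$, and the claim reduces to $\sigma_{n-1}\cdots\sigma_{0}\varnothing=\nu_n^{t}$.

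Both identities $C_{0,n}\varnothing=\nu_n$ and $\sigma_{n-1}\cdots\sigma_{0}\varnothing=\nu_n^{t}$ I would establish by induction on $n$; they are transposes of each other, which is consistent with the duality just used. The base case $n=1$ is $\sigma_{0}\varnothing=(1)=\nu_{1}=\nu_{1}^{t}$. For the inductive step from $n-1$ to $n$ one must compute the $(-n+1)$-signature of $\nu_{n-1}$ (respectively the $(n-1)$-signature of $\nu_{n-1}^{t}$) and verify that it consists entirely of $+$'s, so that iterated $\tilde{f}$'s add precisely the cells of $\nu_n\setminus\nu_{n-1}$. The shape of $\nu_n$ is staircase-like: its rows decrease by blocks governed by $e-1$, so the addable and removable cells of a given residue are explicit. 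Absence of removable $i$-boxes at each intermediate partition is guaranteed by Lemma \ref{Lem:refl_well_def} (singularity is preserved along a reduced expression), and then the rightmost-$+$ rule for $\tilde{f}_i$ identifies exactly which cell is added at each sub-step.

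The main obstacle is the bookkeeping in the inductive step: one needs a uniform description of the residues of the addable and removable cells along the staircase boundary of $\nu_{n-1}$, and one must also describe the intermediate partitions obtained after each single application of $\tilde{f}_{-n+1}$, tracking how new addable $(-n+1)$-cells open up while previous ones are filled. The cleanest way I would organize this is via the $e$-runner abacus display of $\nu_n$: the operator $\sigma_i$ acts on a singular abacus by moving simultaneously all beads that can move from runner $i-1$ to runner $i$, and in this language the identity $C_{0,n}\varnothing=\nu_n$ becomes a transparent statement about a sequence of one-step bead movements on runners $0,-1,\dots,-n+1\pmod e$ starting from the ground-state abacus, with the transpose identity corresponding to the opposite sequence of runner movements.
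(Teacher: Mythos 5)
Your proposal is correct and matches the paper's argument step by step: both reduce to the case $\lambda=\varnothing$ via Lemma \ref{Lem:cryst_iso}, both prove $C_{0,n}\varnothing=\nu_n$ by induction on $n$ using the observation that at each stage there are no removable boxes of the current residue, and both handle the dual crystal by transposition (your $\sigma_{n-1}\cdots\sigma_1\sigma_0$ is exactly the paper's $C^\vee_{0,n}$). One small simplification to your last paragraph: since $e>1$, adding a $j$-box never creates a new addable $j$-box (the two candidate new addable corners have contents $c\pm1$), so the bookkeeping of "new addable $(-n+1)$-cells opening up" is vacuous, and the abacus device — while a clean way to phrase it — is not really needed.
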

\begin{proof}
The proof for the usual crystal structure boils down to the case $\lambda=\varnothing$, thanks
to Lemma \ref{Lem:cryst_iso}. The proof that $C_{0,n}\varnothing=\xi_n$ is by induction on $n$,
for the induction step we need to notice that each time we apply $\sigma_j$, there are no removable $j$-boxes.
As for the dual crystal structure, we notice that $C_{0,n}^*\mu=(C_{0,n}^{\vee} \mu^t)^t$, where
$C_{0,n}^\vee=\sigma_{n-1}\ldots \sigma_{1}\sigma_0$. Then $C_{0,n}^\vee \varnothing=\xi_n^t$ for the same
reason as above. The proof of $C^*_{0,n}\mu=(\mu^t+\xi_n^t)^t$ reduces to the case $\mu=\varnothing$
exactly as with the usual crystal structure.
\end{proof}

\begin{proof}[Proof of Proposition \ref{Prop:comb_cond_holds}]
For large $n$ (namely, for $n\geqslant \lambda^t_1$) the diagram $C_{0,n}\lambda$ has exactly
$n$ rows. However, $C_{0,n}^*\mu$ has $\mu_1^t+n$ rows. It follows that
$C_{0,n}^*\mu\not\succeq C_{0,n}\lambda$, which shows that $(\mathfrak{C}_{\lambda,\mu})$ holds.
\end{proof}

\section{Extended quotients and their equivalences}\label{S_C_cat_equi}
\subsection{Setting}\label{SS_ext_quot_setting}
The KZ functor is not 0-faithful for certain values of parameters, see Section \ref{SS_KZ}.
For these values we need to take some intermediate quotient.
This will be a quotient functor associated to the projectives $P(\lambda)$
with $\lambda\in \mathcal{E}$, where the latter  is the union of crystal components that
depends on the choice of $e,s_1,\ldots,s_\ell$.

Let us introduce a notation. We write $\OCat^2_R$ for the Cherednik category $\mathcal{O}$ and $\OCat^1_R$
for the truncated affine parabolic category (that is a restricted  categorification).
We write
$\pi^i_R: \OCat^i_R\twoheadrightarrow \Hecke^{\bs}_{q,R}\operatorname{-mod}
(=\bigoplus_n \Hecke^{\bs}_{q,R}(n)\operatorname{-mod})$
for the quotient functors. When we omit the subscript $R$, it means that
we consider the categories and functors obtained by the base change to the closed point, $p$.

The definition of  $\mathcal{E}$ will be different for $e>2$ and for $e=2$.

Recall that $\Par^{(1)}_\ell$ denotes the union of all crystal components in
$\Par_\ell$ that contain an $\ell$-partition of 1. The singular $\ell$-partitions of 1
are as follows: for each residue $\alpha$
mod $e$, we take those of the $\ell$ one-box multipartitions whose content is congruent to $\alpha$ with the exception
of the minimal such multipartition (it lies in $\Par^{(0)}_\ell$). For example, if $\ell=4,e=3, \bs=(0,0,4,1)$, then
the multipartitions $(\varnothing, (1),\varnothing, \varnothing), (\varnothing, \varnothing, (1), \varnothing)$ are singular, while the other two single box multipartitions are not.

Now let us consider the case $e=2$.
Let $\nu$ be an $\ell$-multipartition
of $2$ with the following properties:
\begin{enumerate}
\item $\nu$ is not a column of length $2$.
\item $\nu$ is minimal satisfying (1).
\end{enumerate}
This multipartition $\nu$ is constructed as follows. Take the leftmost minimal $s_a$.
One of the boxes of $\nu$ will be the $(1,1)$-box in the $a$th diagram. If there is
$b<a$ with $s_b=s_a+1$, then the other box of $\nu$ will be the $(1,1)$ box in
the $b$th diagram (provided $b$ is minimal such).
Otherwise the other box of $\nu$ will be the $(1,2)$-box in the $a$th diagram.
It is straightforward to check that $\nu$ is singular.
For example, if $\ell=4$, and $\bs=(2,3,0,1)$, then $\nu=(\varnothing,\varnothing, (2),\varnothing)$,
and if $\bs=(2,3,1,0)$, then $\nu=(\varnothing,\varnothing,(1),(1))$.

Now let us give the definition of $\mathcal{E}\subset \Par_\ell$.

\begin{defi}\label{defi_E}
If $e>2$, we take $\mathcal{E}:=\Par^{(1)}_\ell$. If $e=2$, for $\mathcal{E}$, we take the union of
the component of $\nu$ with $\Par^{(1)}_\ell$.
\end{defi}

Let $\Cat^i_R$ denote  the quotient of $\OCat^i_R, i=1,2,$ defined by the projectives $P^i_R(\lambda)$
with $\lambda\in \mathcal{E}$. The inclusion $\Par^{(0)}_\ell\subset \mathcal{E}$ gives rise
to the quotient functor $\Cat^i_R\twoheadrightarrow \Hecke^{\bs}_{q,R}\operatorname{-mod}$
to be denoted by $\underline{\pi}^i_R$.

\subsection{Main result}\label{SS_ext_main}
Here is the main result of this section.


\begin{Prop}\label{Thm:ext_cat_equi}
There is an equivalence $\Cat^1_R\xrightarrow{\sim}\Cat^2_R$ intertwining
the functors $\underline{\pi}^1_R,\underline{\pi}^2_R$ and preserving the
labels of the projective objects.
\end{Prop}

The theorem  will follow if we show that
\begin{enumerate}
\item $\pi^1_R(P^1_R(\lambda))=\pi^2_R(P^2_R(\lambda))$ for any $\lambda\in \mathcal{E}$.
\item $\underline{\pi}^1_R$ is fully faithful on projectives.
\end{enumerate}
We remark that $\underline{\pi}^2_R$ is fully faithful on projectives because the KZ functor $\pi^2_R$ is.

Below we will see that (1) and (2) will follow if we check the following claims (a similar strategy was used
in \cite{LW}):
\begin{itemize}
\item[(a1)] $\pi^1_R(P^1_R(\lambda))\cong\pi^2_R(P^2_R(\lambda))$ for any singular $\lambda$ with $|\lambda|=1$.
\item[(b1)] $\pi^1_R(P^1_R(\nu))\cong\pi^2_R(P^2_R(\nu))$ (if $e=2$).
\item[(a2)] $\Hom_{\OCat^1}(P^1(\lambda),P^1(\mu))=\Hom_{\Hecke_q^{\bs}(1)}(\pi^1(P^1(\lambda)), \pi^1(P^1(\mu)))$ for  $\lambda,\mu$
with $|\lambda|=|\mu|=1$.
\item[(b2)] $\End_{\OCat^1}(P^1(\nu))=\End_{\Hecke_q^{\bs}(2)}(\pi^1(P^1(\nu)))$ (when $e=2$).
\end{itemize}
Here we write $P^1(\lambda),P^2(\lambda)$ for the specializations of $P^1_R(\lambda),P^2_R(\lambda)$.

More precisely, we will see that (a1) and (b1) imply (1), while (a2) and (b2) imply (2).

Conditions (a1) and (a2) are easy to check.

\begin{Lem}\label{Lem:a1_a2}
(a1) and (a2)  are true.
\end{Lem}
\begin{proof}
Any block in $\OCat^i(1)$ has
some number, say $d$, of standard objects that are linearly ordered, let $\lambda_1<\ldots<\lambda_d$
be their labels. The quotient morphism $\pi^i$ is given by the Hom from $P^i(\lambda_1)$.

The block is a block in a basic $\sl_2$-categorification,
see \cite[Section 4.3]{str}, of length $d$. It follows from the main result of \cite{LW} that the block
in $\OCat^i$  is equivalent to the block  in the BGG category $\mathcal{O}$ for $\gl_d$ with singularity
of type $\mathfrak{S}_{d-1}$.  So $\underline{\pi}^i$ is fully faithful on projectives.
This implies (a2).

To prove (a1) we need to deal with the deformed categories. To do this we point out that
we have an exact sequence $0\rightarrow P^i(\lambda_j)\rightarrow P^i(\lambda_{j-1})\rightarrow \Delta(\lambda_{j-1})
\rightarrow 0$.  This exact sequence deforms to $0\rightarrow P^i_R(\lambda_j)\rightarrow
P^i_R(\lambda_{j-1})\rightarrow \Delta^i_R(\lambda_{j-1})\rightarrow 0$. The $R$-module $\pi^i_R(\Delta^i_R(\lambda_j))$
free of rank $1$, where $\Hecke^{aff}_{q}(1)$ acts with an eigenvalue say $Q_j$, where $Q_1,\ldots,Q_d$
are pairwise different elements of $R^\times$ independent of $i$. Since $\pi^1_R(P^1_R(\lambda_1))\cong \pi^2_R(P^2_R(\lambda_1))$,
we prove by induction on $j$ that $\pi^1_R(P^1_R(\lambda_j))=\pi^2_R(P^2_R(\lambda_j))$.
\end{proof}

(b1) and (b2) are more complicated and will be proved in Section \ref{SS_deg1}.
To do that we will need an explicit construction of the objects $P^i_R(\nu)$
that will be carried in Section \ref{SS_constr_proj1}.

\subsection{Objects $Q^j_R(\nu)$}\label{SS_constr_proj1}
Let us write $\Delta^i_{A,R}(?)$ for standard objects in the level $1$ categories
(the truncated Kazhdan-Lusztig category or the category $\mathcal{O}$ for a type
A Cherednik algebra).

\begin{defi}\label{def_Q}
Let $Q^i_R(\nu)$ be the component of $\Delta^i_{A,R}(2)\dot{\otimes}\Delta^i_R(\varnothing)$  in the block
with two different residues. Similarly, let $R^i_R(\nu)$ be the component of
$\Delta^i_{A,R}(1^2)\dot{\otimes}\Delta^i_R(\varnothing)$ in the same block.
\end{defi}

\begin{Prop}\label{Prop:proj_coinc}
We have $Q^i_R(\nu)=P^i_R(\nu)$.
\end{Prop}
In the proof we will assume that there are elements with different residues mod 2 among $s_1,\ldots,s_\ell$.
The case when all residues are the same  is somewhat exceptional but easier and can be treated
similarly to what is done below in the proof.
\begin{proof}
The proof is in several steps.

{\it Step 1}. Note that $\pi^i_R(Q^i_R)$ is independent of $i$ and coincides with the block component
of $\Hecke^{\bs}_{q,R}(2)/(T+1)$. Note also that $\pi^i_R(R^i_R)=\pi^i_R(Q^i_R)$.
This is because the functor $\pi^i_R$ intertwines $\dotimes: \OCat^i_{A,R}\boxtimes
\OCat^i_R\rightarrow \OCat^i_R$ with the induction functor
$\Hecke_{q,R}\operatorname{-mod}\boxtimes \Hecke^{\bs}_{q,R}\operatorname{-mod}
\rightarrow \Hecke^{\bs}_{q,R}\operatorname{-mod}$. Note also that we have
a short exact sequence
\begin{equation}\label{eq:exact_seq} 0\rightarrow R^i_R(\nu)\rightarrow (F_0F_1\oplus F_1F_0)\Delta^i_R(\varnothing)
\rightarrow Q^i_R(\nu)\rightarrow 0.\end{equation}

{\it Step 2}. We claim that the $Q^i_R(\nu)$  admits a filtration with quotients of the form $\Delta^i_R(\lambda)$,
where $\lambda$ is a two-box multipartition with boxes of different residues that is not a column, each
occurring with multiplicity $1$. Similarly, $R^i_R(\nu)$  admits a filtration with quotients of the form $\Delta^i_R(\lambda)$, where $\lambda$ is a two-box multipartition with boxes of different residues that is not a row, each occurring with multiplicity $1$. For $i=1$, this follows from \cite[Proposition A2.6(b)]{VV}.
For $i=2$, this follows from the fact that the Bezrukavnikov-Etingof induction functors on the level of $K_0$
behave as the induction on the level of the groups.

{\it Step 3}. We claim that $R^i_R(\nu)$ is tilting. It is enough to prove this after the specialization
to $p$.

Let us check this for $i=1$. It is enough to check that $R^1(\nu)$  is tilting in the whole affine parabolic category
$\OCat^{\p}_{-e}$, not  in the truncation (indeed, the truncation is a highest weight subcategory).
Recall that $M_0\dotimes M_1$ is standardly filtered provided both $M_0,M_1$ are,
\cite[Corollary 7.3]{VV}. Now let $M_0$ be standard with $DM_0=M_0$,
where $D$ is the duality as in \cite[Section 2.6]{VV}, we can take $M_0=\Delta^1_{A}(1^2)$.
Note that $M_0\dotimes\bullet$ preserves the subcategory of costandardly filtered
objects.  This is a consequence of the fact that  the functors $M_0\dotimes\bullet$
and $DM_0\dotimes \bullet$ are biadjoint, \cite[Corollary 7.3]{VV}, combined
with the fact that $M_0\dotimes\bullet$ preserves standardly filtered objects.
Since $\Delta^1_R(\varnothing)$ is tilting, we see that $\Delta_{A}(1^2)\dotimes \Delta^1(\varnothing)$
is tilting as well. So  the object $R^1(\nu)$ is indeed tilting.

Now let us consider $i=2$. By \cite[Proposition 1.9]{Shan}, the Bezrukavnikov-Etingof restriction functors
$\Res_?$ preserves the subcategories of standardly filtered objects. By \cite[Conjecture 3.17]{BE}
proved in \cite{fun_iso},  the functors $\Res_?$ intertwine the naive duality functors
for Cherednik categories $\mathcal{O}$ introduced in \cite[Section 4.2]{GGOR}. So the functors
$\Res_?$ preserve the subcategories of costandardly filtered objects as well. By the adjointness,
the induction functors preserve the subcategories of standardly filtered objects and
of costandardly filtered objects. So $R^2(\nu)$ is tilting.

Note also that $Q^2(\nu)$ is projective. This is because $\Delta^2_A(2)$ is projective.

{\it Step 4}. Now we want to understand the structure of the objects $E_0 Q^i(\nu), E_1 Q^i(\nu),
E_0 R^i(\nu)$, $E_1 R^i(\nu)$. We will do the case of $E_0$, the other is analogous.
Without loss of generality we will assume that $s_a$ is even. The object $E_0 Q^1(\nu)$ has a nilpotent
endomorphism, denoted by $X$, that equals $X_2-1$. Recall that we write $\ell_1$ for the number of indexes $i$
with $s_i=0$, and $\ell_{-1}=\ell-\ell_1$.
Let $\lambda_1<\ldots<\lambda_{\ell_1}$ be all single box multipartitions with residue $1$. We claim that $E_0 Q^i(\nu) =E_0 R^i(\nu)=P^i(\lambda_1)\boxtimes \C[X]/(X^{\ell_1+1})$.

By Step 2, the modules $E_0 Q^i(\nu), E_0 R^i(\nu)$ are standardly filtered, their standard composition factors are
$\Delta^i(\lambda)$, where $\lambda$ is a single box with residue $1$, each standard occurs with multiplicity
$\ell_1+1$. We can compute $\pi^i(E_0 Q^i(\nu)), \pi^i(E_0 R^i(\nu))$: both  equal $E_0 \pi^i(Q^i(\nu))=\C[X_1,X_2]/(X_2-1)^{\ell_1+1}(X_1+1)^{\ell_{-1}}$.
Here the generator of $\Hecke_q^{\bs}(1)$ acts by $X_1$ and the endomorphism $X$ by $X_2-1$. We see that $\pi^i(E_0 Q^i(\nu))= \pi^i(E_0 R^i(\nu))$ is indecomposable
as a $\Hecke_q^{\bs}(1)\otimes \C[X]$-module and therefore  $E_0 Q^i(\nu), E_0 R^i(\nu)$ have no decompositions into direct summands preserved by $X$.
By (\ref{eq:exact_seq}), $E_0 Q^i(\nu)$ is the kernel
of $E_0(F_0 F_1\oplus F_1 F_0) \Delta^i(\varnothing)\twoheadrightarrow E_0 R^i(\nu)$.
The object $E_0(F_0 F_1\oplus F_1 F_0) \Delta^i(\varnothing)$ is a projective-injective and so is a direct sum of several copies of $P^i(\lambda_1)$. The object $E_0 R^i(\nu)$ is tilting. As  we have already observed in the proof of
Lemma \ref{Lem:a1_a2}, the block of $\OCat^i(1)$ with residue $1$
is equivalent to a block in a basic highest weight $\mathfrak{sl}_2$-categorification
in the sense of \cite{str}. By \cite[Sections 6.3,7.1]{str}, $T^i(\lambda_{j})$ has composition
series with standard subquotients $\Delta^i(\lambda_1),\ldots,\Delta^i(\lambda_j)$, each with multiplicity $1$.
Since $[E_0 R^1(\nu)]=(\ell_1+1)[T^i(\lambda_k)]$,
it follows that $E_0 R^1(\nu)$
is the sum of $\ell_1+1$ copies of $T^i(\lambda_k)=P^i(\lambda_1)$ and so is projective. Therefore
$E_0(F_0 F_1 \Delta^i(\varnothing)\oplus F_1 F_0 \Delta^i(\varnothing))\twoheadrightarrow E_0 R^i(\nu)$
splits and we see that $E_0 Q^i(\nu)$ is also projective, isomorphic to $P^i(\lambda_1)^{\ell_1+1}$.
The quotient morphism $\pi^i$ is fully faithful on the projectives and on the tiltings in $\OCat^i(1)$,
for example, because this is always a block in a GGOR category $\mathcal{O}$.

Since $\pi^i(E_0Q^i(\nu))=\pi^i(P^i(\lambda_1))\otimes \C[X]/(X^{\ell_1+1})$, we deduce from the full faithfulness
that $E_0 Q^i(\nu)=E_0 R^i(\nu)=P^i(\lambda_1)\boxtimes \C[X]/(X^{\ell_1+1})$.
%

{\it Step 5}. We claim that the objects $\pi^i(Q^i(\nu))=\pi^i(R^i(\nu)), Q^i(\nu),R^i(\nu)$
are all indecomposable. Assume the converse, let one of these objects decompose as $N\oplus N'$.
The decomposition $E_0N\oplus E_0 N'$ is $X$-stable. From Step 4, it follows that
$E_0 N=0$ or $E_0 N'=0$. Similarly, $E_1 N=0$ or $E_1 N'=0$. In $\Hecke^{\bs}_{q}(2)\operatorname{-mod}$,
there is no nonzero object annihilated by both $E_0,E_1$. In $\OCat^i(2)$, there is no standardly
filtered object with this property. We conclude that $E_0 N=0$ and $E_1 N'=0$. We already see
that this cannot happen for $\OCat^i(2)$: if $\lambda$ has two boxes in different partitions,
both $E_0\Delta^i(\lambda)$ and $E_1\Delta^i(\lambda)$ are nonzero. By our assumptions on the parity
of $s_1,\ldots, s_\ell$, $\Delta^i(\lambda)$  occurs in the composition series of $Q^i(\nu)$
and $R^i(\nu)$. This shows that $Q^i(\nu)$ and $R^i(\nu)$ are indecomposable.
Since $\pi^2$ is fully faithful on the  tiltings, we see that $\pi^2(R^2(\nu))$ is indecomposable.

By Step 2, $Q^2(\nu)=P^2(\nu)$.

{\it Step 6}.
We claim that
$R^1_R(\nu)=T^1_R(\nu^*)$, where $\nu^*$ is the maximal multipartition appearing among labels
of the standard subquotients in a filtration of $R^1(\nu)$ by standards. Explicitly, $\nu^*$ can be described
as follows. Recall the combinatorial duality from Section \ref{SS_crystals}. Let $\nu'$ be constructed
as $\nu$ but for the dual multi-charge ${\underline{s}}^\dagger$. Then we set $\nu^*:=(\nu')^\dagger$.
In particular, $\Delta^1_R(\nu^*)$ can be a realized as a subobject in a filtration of $R^1_R(\nu)$ with standard composition factors. Since $R^1_R(\nu)$ is indecomposable (Step 5)
and tilting (Step 3),  we see that  $R^1_R(\nu)=T^1_R(\nu^*)$.

{\it Step 7}.  Now we are ready to prove that the classes of $P^1(\nu)$ and $Q^1(\nu)$
in $K_0$ are the same.
Let $b^{\bs}_\lambda, B^{\bs}_\lambda$ be the elements of Uglov's dual and usual canonical bases, respectively.
Then $[L^1(\lambda)]=b^{\bs}_\lambda$, see \cite[Section 8.2]{VV}, while $[T^1(\lambda)]=B^{\bs}_\lambda$. The latter is
checked similarly to \cite[Section 8.2]{VV} using the well-known fact that the Ringel dual of the affine parabolic
category $\mathcal{O}$ on level $\kappa$ is the affine parabolic category category $\mathcal{O}$ on level $-\kappa$.
On the other hand, the BGG reciprocity implies that $[P^1(\lambda)]=(B^{\bs^\dagger}_{\lambda^\dagger})^{\dagger}$, where the external $\dagger$ means the map between Fock spaces
that sends the standard basis element labeled by $\mu$ to that indexed by $\mu^\dagger$. From the previous paragraph
it follows that $B^{\bs^{\dagger}}_{\nu^\dagger}$ is the class of the analog of $R^1(\nu)$ in the category
corresponding to the multi-charge $\bs^\dagger$. But that class coincides with $[Q^1(\nu)]^{\dagger}$.
This implies that $[Q^1(\nu)]=[P^1(\nu)]$.

%

{\it Step 8}. We claim that if $Q^1(\nu)\twoheadrightarrow \Delta^1(\lambda)$, then $\lambda=\nu$.
Assume the converse. Then we have an epimorphism $Q^1(\nu)\twoheadrightarrow \Delta^1(\lambda)\oplus \Delta^1(\nu)$.
This gives rise to an epimorphism $E_i Q^1(\nu)\twoheadrightarrow E_i\Delta^1(\lambda)\oplus
E_i \Delta^1(\nu)$. It is equivariant with respect to the endomorphism $X$ of both sides
that was mentioned in Step 4.
From the description of $E_i Q^1(\nu)$ given in Step 4 it follows
that the head of $E_iQ^1(\nu)$ viewed as an object in $\OCat^1(1)\boxtimes \C[X]\operatorname{-mod}$
 is simple. Therefore $E_i\Delta^1(\lambda)=0$ or $E_i \Delta^1(\nu)=0$.
In particular, both $\lambda$ and $\nu$ should be rows with $(1,1)$-boxes
of different residues. That box in $\lambda$ has to have residue $1$ (recall that we have assumed
that $s_a$ is even).

Let $\tilde{\lambda}$ be the multi-partition consisting of the $(1,1)$-boxes
in $\lambda$ and $\nu$. Note that $\tilde{\lambda}\leqslant \lambda'$ for any
$\lambda'$ that contains a $(1,1)$ box with residue $1$. Pick
a filtration with standard quotients going in order on the kernel of $Q^1(\nu)\twoheadrightarrow \Delta^1(\lambda)$.
Let $M$ be the quotient of this filtration containing $\Delta^1(\tilde{\lambda})$
as a subobject  and  not containing $\Delta^1(\lambda')$ with $\lambda'$ as above.
We see that $Q^1(\nu)\twoheadrightarrow M\oplus \Delta^1(\lambda)$ and $E_{0}M=
E_{0}\Delta^1(\tilde{\lambda})$.  It follows that $E_{0}Q^1(\nu)\twoheadrightarrow
E_{0}\Delta^1(\lambda)\oplus E_{0}\Delta^1(\tilde{\lambda})$.  Similarly to the previous paragraph, we get a contradiction.

{\it Step 9}. Finally, we are ready to prove an isomorphism $Q^1(\nu)\cong P^1(\nu)$.
Recall how $P^1(\nu)$ is obtained. Let us order the partitions $\nu'\leqslant \nu$,
$\nu_1=\nu,\nu_2,\ldots,\nu_s$ in such a way that $\nu_i<\nu_j$ implies $i>j$.
Let $P_k$ denote the maximal quotient of $P^1(\nu)$ filtered by $\Delta(\nu_i)$
with $i\leqslant k$. Then $P_{k+1}$ is included into an exact sequence
$$0\rightarrow \Ext^1(P_k,\Delta^1(\nu_{k+1}))\otimes \Delta^1(\nu_{k+1})\rightarrow P_{k+1}\rightarrow P_{k}\rightarrow 0.$$ We have seen in Step 7 that  $\Delta(\lambda)$ occurs  in $P^1(\nu)$ if and only
if $\lambda$ is not a column, and all multiplicities are $1$. Now $Q_k$ be the maximal
quotient of $Q^1(\nu)$ filtered with $\Delta^1(\nu_1),\ldots,\Delta^1(\nu_k)$.
We will prove that $P_k\cong Q_k$ by induction on $k$. The case $k=1$
is trivial. Now suppose that we know that $P_k\cong Q_k$ and want to prove that
$P_{k+1}\cong Q_{k+1}$. It is enough to prove that the extension
$$0\rightarrow \Delta(\nu_{k+1})\rightarrow Q_{k+1}\rightarrow Q_k\rightarrow 0$$
does not split. But this follows from Step 8.

The isomorphism $Q^1(\nu)\cong P^1(\nu)$ implies $Q^1_R(\nu)\cong P^1_R(\nu)$.
\end{proof}

%

\subsection{Proof of Proposition \ref{Thm:ext_cat_equi}}\label{SS_deg1}
In this section we complete the proof of Proposition \ref{Thm:ext_cat_equi}.


\begin{Lem}\label{Lem:b1_b2}
(b1) and (b2)  hold.
\end{Lem}
\begin{proof}
Let us start with (b1). By Proposition \ref{Prop:proj_coinc}, $P^i(\nu)=Q^i(\nu)$.
By Step 1 of the proof of that proposition, $\pi^i(Q^i(\nu)),i=1,2,$ both coincide with the direct summand
of $\Hecke_q^{\bs}(2)/(T+1)$, where $X_1X_2$ acts with generalized eigenvalue $-1$. Similarly, one sees
that $\pi^j_R(Q^j_R(\nu)), j=1,2,$ is a  direct summand in $\Hecke_{q,R}^{\bs}(2)/(T+1)$,
same for both $j$. This proves (b1).

Let us proceed to proving (b2). Recall, see (\ref{eq:exact_seq}),
that $Q^1(\nu)\subset (F_0F_1\oplus F_1F_0)\Delta^1(\varnothing)$. In particular, the socle
of $Q^1(\nu)$ does not contain simples annihilated by $\pi^1$. It follows that
$\End(Q^1(\nu))\hookrightarrow  \End(\pi^1(Q^1(\nu)))$.
So to prove (b2), thanks to the double centralizer property for $\pi^2$, it is enough to show that
$\dim \End(Q^1(\nu))=\dim \End(Q^2(\nu))$. By the BGG reciprocity, $\dim \End(P^i(\nu))$ equals $\sum_{\lambda}
 [P^i(\nu):\Delta^i(\lambda)][\Delta^i(\lambda):L^i(\nu)]=\sum_{\lambda} [P^i(\nu):\Delta^i(\lambda)]^2$.
The latter is the number of multipartitions of $2$ that are not column and does not depend on
the choice of $\OCat^i$.
\end{proof}

\begin{proof}[Proof of Proposition \ref{Thm:ext_cat_equi}]
Let us show that (a2) and (b2)  imply (2)  from Section \ref{SS_ext_main}. First of all,
we claim that (a2) implies
$$\Hom_{\OCat^i_R}(P^i_R(\lambda),P^i_R(\mu))=
\Hom_{\Hecke^{\bs}_{q,R}(1)}(\pi^i_R(P^i_R(\lambda)), \pi^i_R(P^i_R(\mu))), |\lambda|=|\mu|=1.$$
This is because the natural homomorphism from the left hand side to the right hand side  is an isomorphism
after specialization and so, since the right hand side is $R$-flat, is an isomorphism. Similarly, (b2)
implies $\End_{\OCat^i_R}(P^i_R(\nu))=\End_{\Hecke^{\bs}_{q,R}(2)}(\pi^i_R(P^i_R(\nu)))$.

For  $\lambda\in \mathcal{E}$, the module $P^i_R(\lambda)$
is a direct summand of $F^k \pi^i_R(P_R(\lambda^0))$ with $|\lambda^0|\leqslant 1$ or $\lambda^0=\nu$.
So  it is enough to prove that
\begin{equation}\label{eq:Homs}\Hom_{\OCat^i_R}(F^k P^i_R(\lambda^0), F^{k'}P^i_R(\mu^0))=
\Hom_{\Hecke^{\bs}_{q,R}(m)}(F^k\pi_R^i(P^i_R(\lambda^0)), F^{k'}\pi^i_R(P^i_R(\mu^0))),\end{equation}
where $m=|\lambda^0|+k=|\mu^0|+k'$. Using the biadjointness of $E$ and $F$, we can reduce  the proof
of (\ref{eq:Homs}) to showing that
$$\Hom_{\OCat^i_R}(P^i_R(\lambda),P^i_R(\mu))=\Hom_{\Hecke^{\bs}_{q,R}(|\lambda|)}(\pi^i_R(P^i_R(\lambda)), \pi^i_R(P^i_R(\mu))),$$ when one of $\lambda,\mu$ is singular. Using the biadjointness again, we reduce to the case when both $\lambda,\mu$ are singular.  This case has been established in the previous paragraph.

Let us show that (a1) and (b1) imply (1).
First, let us check that the sets $\{\pi^i_R P_R^i(\lambda), \lambda\in \mathcal{E}\},i=1,2,$ coincide. Indeed, since
the functors $\underline{\pi}^i_R, i=1,2,$ are fully faithful on the projectives,
both sets consist  precisely of the indecomposable summands of the modules of the form $F^n \pi_R^i (P_R^i(\lambda^0))$
with $|\lambda^0|\leqslant 1$ or $\lambda^0=\nu$ (the latter applies only to the
case $e=2$).
Now to check that $\pi^1_R P_R^1(\lambda)=\pi^2_R P_R^2(\lambda)$  one notices that the crystals are the same
on the level of labels (this follows from the observation that they are the same even for $\OCat^i$,
in that case, the coincidence of crystals follows from the main result of \cite{cryst}).
Also the singular labels agree by (a1). It follows that $\pi^1_R P_R^1(\lambda)=\pi^2_R P_R^2(\lambda)$
for all $\lambda\in \mathcal{E}$.
%
%
%
\end{proof}

\subsection{Summary}
We are going to summarize things that we have already proved and list things that we still need to prove.
By Proposition \ref{Thm:ext_cat_equi},
we have equivalent categories $\Cat^1_R\cong \Cat^2_R$ with quotient functors $
\overline{\pi}^1_R:\OCat^1_R\twoheadrightarrow \Cat^1_R,
\overline{\pi}_R:\OCat^2_R\twoheadrightarrow \Cat^2_R$ defined by the projectives
with labels in $\mathcal{E}$ for $\mathcal{E}$ from Definition \ref{defi_E}.
Below we will write $\Cat_R$ instead of $\Cat^i_R$.

To establish the asymptotic version of Conjecture \ref{Conj:VV}, it is sufficient to show the following.

\begin{Thm}\label{Thm:main}
There is a category equivalence $\OCat^1_R(n)\rightarrow \OCat^2_R(n)$
intertwining the quotient functors $\OCat^i_R(n)\twoheadrightarrow \Cat_R(n)$.
\end{Thm}

In the theorem we assume that $\OCat^1_R=\bigoplus_{i\leqslant N} \OCat^1_R(i)$
with $N\gg n$. Theorem \ref{Thm:main} will be proved in the next section.

\section{Proof of Theorem \ref{Thm:main}}\label{S_proof_complete}
In this section we prove Theorem \ref{Thm:main}.

\subsection{Functor $\overline{\pi}^2_R$ is $1$-faithful}\label{SS_0_faith}
Here we prove the following result.

\begin{Prop}\label{Prop:pi2_1faith}
The functor $\overline{\pi}^2_R:\OCat^2_R\twoheadrightarrow \Cat_R$ is $1$-faithful.
\end{Prop}
\begin{proof}
By Lemma \ref{Lem:1_faith}, it is sufficient to show that $\overline{\pi}^2$ is $0$-faithful.
The proof  is similar to that in \cite[Proposition 5.9]{GGOR} for the usual KZ functor.

Recall that to any module in $\OCat^2(n)$ we can assign its support that is a closed subvariety of $\C^n$.

\begin{Lem}\label{Lem:Par1_prop}
The set $\mathcal{E}\cap\Par_\ell(n)$ consists precisely of $\lambda$ with $\operatorname{codim}_{\C^n}\operatorname{Supp}L(\lambda)\leqslant 1$.
\end{Lem}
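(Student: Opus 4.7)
The plan is to compute $\operatorname{codim}\operatorname{Supp}L(\lambda)$ via the Bezrukavnikov–Etingof restriction functors and compare with the $\hat{\sl}_e$-crystal. By Bezrukavnikov–Etingof, $L(\lambda)$ is finite-dimensional iff every parabolic restriction to a codimension-$1$ parabolic of $G_n$ vanishes; iterating, $\operatorname{codim}\operatorname{Supp}L(\lambda)\leqslant 1$ iff at least one such restriction is non-zero. The $\hat{\sl}_e$-piece of these restrictions is the sum $\bigoplus_iE_i$ on $\OCat_{\kappa,\bs}$, and by Shan's theorem the induced crystal structure on simples coincides with the Uglov crystal of Subsection \ref{SS_crystals}. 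In particular, $\operatorname{Res}^{G_n}_{G_{n-1}}L(\lambda)=0$ iff $\lambda$ is $\hat{\sl}_e$-singular, and iterating this yields the identity $\operatorname{codim}\operatorname{Supp}L(\lambda)=|\mu_\lambda|$, where $\mu_\lambda$ is the unique singular (cuspidal) element in the crystal component of $\lambda$, provided that the only conjugacy class of reflections whose restriction matters is the ``sign-change'' one.

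For $e>2$ this last assumption holds, so $\lambda$ has codim $\leqslant 1$ iff $|\mu_\lambda|\leqslant 1$, i.e., iff $\lambda\in\Par_\ell^{(1)}=\Par^1_\ell$, and the lemma is immediate.

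For $e=2$ there is an extra ingredient: the group $G(\ell,1,n)$ now has \emph{two} conjugacy classes of reflections (sign-changes and transpositions) with distinct codimension-$1$ strata in $\C^n/G_n$. The transposition-restriction $\operatorname{Res}^{G_n}_{\mathfrak{S}_2}$ detects the second stratum and is not captured by the $\hat{\sl}_e$-crystal. The special $\nu$ is constructed exactly so that, although Uglov-singular (so $\operatorname{Res}^{G_n}_{G_{n-1}}L(\nu)=0$), the module $L(\nu)$ has non-vanishing restriction to $\mathfrak{S}_2$ and therefore codim-$1$ support. Propagating through the crystal, every $\lambda$ in the component of $\nu$ inherits this property. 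Conversely, a combinatorial check shows that $\nu$ is the unique Uglov-singular, non-column, minimal $2$-box multipartition with this behavior; singular multipartitions of size $\geqslant 2$ outside $\Par_\ell^{(1)}\cup(\text{component of }\nu)$ are genuinely finite-dimensional. Combining both directions yields $\Par^1(n)=\{\lambda:\operatorname{codim}\operatorname{Supp}L(\lambda)\leqslant 1\}$.

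The main obstacle will be the $e=2$ step: verifying that $L(\nu)$ genuinely has codim-$1$ (not finite-dimensional) support, and that no other ``spurious'' components of $2$-box singular multipartitions sneak in. I expect to handle this by directly analyzing $\operatorname{Res}^{G_n}_{\mathfrak{S}_2}L(\nu)$ using the explicit construction of $\nu$ (its boxes realize precisely the transposition-type stratum), together with the classification of cuspidal simples at parameter $\kappa=1/2$ which follows from the combinatorial minimality of $\nu$ among non-columns.
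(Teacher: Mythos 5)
Your overall strategy matches the paper's: reduce to singular $\lambda$ (using invariance of support along crystal components), detect the $\hat{\sl}_e$-singularity via restriction, and then treat $e=2$ separately. However, two points need attention.

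First, the claimed identity $\operatorname{codim}\operatorname{Supp}L(\lambda)=|\mu_\lambda|$ is not correct as stated, even with your proviso. The precise fact (from Shan--Vasserot, which the paper uses directly) is that the support of a simple is labelled by a pair $(k,j)$ — $k$ zeroes and $j$ distinct $e$-tuples of equal coordinates — with $\operatorname{codim}=k+(e-1)j$, and $\lambda$ is singular exactly when $k+ej=n=|\lambda|$. For singular $\lambda$ this gives $\operatorname{codim}=|\lambda|-j$, so your identity fails whenever $j>0$. The final conclusion for $e>2$ still comes out right, but only because $\operatorname{codim}\leqslant 1$ then forces $j=0$ (since $e-1\geqslant 2$), not because the $j>0$ strata are absent. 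It is cleaner and safer to work with $(k,j)$ from the start, as the paper does, and to say: $\operatorname{codim}\leqslant 1$ iff $(k,j)\in\{(0,0),(1,0)\}$, or additionally $(0,1)$ when $e=2$.

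Second, and this is the substantive gap, your treatment of the $e=2$ case, where the extra option $(k,j)=(0,1)$ with $n=2$ arises, is underdeveloped. You correctly identify that the criterion is $\operatorname{Res}^{G_2}_{\mathfrak{S}_2}L(\lambda)\neq 0$ for a singular $2$-box $\lambda$, but the hard part — showing that $\nu$ is the \emph{only} such $\lambda$ — is waved away as "a combinatorial check." The paper's proof of uniqueness is not purely combinatorial: by Shan--Vasserot, the restriction of a singular $L(\lambda)$ to $\mathfrak{S}_2$ is a sum of copies of $L_A(2)$, so by adjunction a nonzero restriction forces a surjection $\operatorname{Ind}^{G_2}_{\mathfrak{S}_2}\Delta_A(2)\twoheadrightarrow L(\lambda)$, hence $Q^2(\nu)\twoheadrightarrow L(\lambda)$, and then one invokes $Q^2(\nu)=P^2(\nu)$ (Corollary \ref{Cor:QP2}), an input established earlier in the paper and not available "for free." Your proposed direct analysis of $\operatorname{Res}^{G_2}_{\mathfrak{S}_2}L(\nu)$ establishes the forward direction (that $\nu$ does have codimension-$1$ support), but uniqueness needs either the indecomposability/simple-head argument for $Q^2(\nu)$ or an equivalent classification of cuspidals at $e=2$ with a genuine proof, which your plan doesn't supply.
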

\begin{proof}[Proof of Lemma \ref{Lem:Par1_prop}]
The codimension is invariant on the crystal components, this follows from \cite[5.5]{cryst}. So it is enough
to assume that $\lambda$ is singular. Recall that the support is described by $2$ integers, see, for example,
\cite[Section 3.10]{shanvasserot}: a pair $(k,j)$ corresponds to the support $\{(x_1,\ldots,x_n)\}$, where we have $k$
zeroes and $j$  $e$-tuples of pairwise equal numbers.  The codimension equals $k+(e-1)j$. The partition
$\lambda$ is singular if and only if $k+ej=n$. So the codimension does not exceed $1$ if and only if
$e=2,k=0,j=1$ or $k\leqslant 1,j=0$. The second possibility holds precisely for singular $\lambda$
with $|\lambda|\leqslant 1$. It remains to show that the first possibility holds precisely for $\nu$.
For singular $\lambda$ with $|\lambda|=2$ the condition that $j=1$ is equivalent to $\operatorname{Res}^{G_2}_{\mathfrak{S}_2}L(\lambda)\neq 0$.
So this condition holds for $\lambda=\nu$ because $P^2(\nu)=Q^2(\nu)$,
see Proposition \ref{Prop:proj_coinc}. By  support considerations, if $\lambda$ is singular,
then $\operatorname{Res}^{G_2}_{\mathfrak{S}_2}L^2(\lambda)$ is a direct sum of several copies of $L_A^2(2)$.
So $\operatorname{Ind}_{\mathfrak{S}_2}^{G_2}\Delta_A^2(2)\twoheadrightarrow L^2(\lambda)$
(recall that $P^2_A(2)= \Delta^2_A(2)$). Equivalently, $P^2(\nu)\twoheadrightarrow L^2(\lambda)$ so $\lambda=\nu$.
\end{proof}

Using Lemma \ref{Lem:Par1_prop},  let us prove that $\overline{\pi}^2$ is $0$-faithful. This is equivalent to
$\operatorname{Ext}^i(L^2(\lambda),\Delta^2(\mu))=0$ for $i=0,1, \lambda\not\in \mathcal{E},\mu\in \Par_\ell$.
The case of $i=0$ is classical (it holds for all $\lambda\not\in \Par^0_\ell$). Now let $M$ be a non-trivial
extension of $L(\lambda)$ by $\Delta(\mu)$. Let $(\C^n)^{reg,1}$ be the open subspace in $\C^n$
obtained by removing all stratas (by the stabilizer in $G_n$) of codimension $>1$. Then we can
restrict $M$ to $(\C^n)^{reg,1}$, the restricted sheaf $M|_{(\C^n)^{reg,1}}$ is a module over the
restriction of $H_c$. The global sections  $\Gamma(M|_{(\C^n)^{reg,1}})$  again form an $H_c$-module. But since $\Delta^2(\mu)$ is a free coherent sheaf on $\C^n$ and $L$ is supported outside of $(\C^n)^{reg,1}$,  the composition $\Delta^2(\mu)\rightarrow M\rightarrow \Gamma(M|_{(\C^n)^{reg,1}})$ is an isomorphism. So the extension $0\rightarrow\Delta^2(\mu)\rightarrow M\rightarrow L^2(\lambda)\rightarrow 0$ is trivial.
\end{proof}
%

\subsection{Functor $\overline{\pi}^1_R$ is $0$-faithful}\label{SS:check_faithf_codim1}
Let $\p$ be a point in $R$.
Recall that we have linear functions $y_0=\kappa,y_i=\kappa s_i-i/\ell, i=1,\ldots,\ell$ on $\param$,
where $y_1,\ldots,y_\ell$ are defined up to a common summand.  Recall that
$y_0(p)=-\frac{1}{e}, y_i(p)=-\frac{s_i}{e}-\frac{i}{\ell}$.
We can view $y_0,y_i-y_j, 1\leqslant i,j\leqslant \ell$
as  elements of the residue field $\mathbf{k}_{\p}$ of $\p$.

We start by determining what type A Lie algebra acts on $\OCat^1_{\p}$.
Define an equivalence relation on $\{1,\ldots,\ell\}$ by setting $i\sim_{\p} j$
if $(y_i-p_i)-(y_j-p_j)=zy_0$ in $\mathbf{k}_{\p}$, where $z\in \Z$. Let $I^1_{\p},\ldots,I^s_{\p}$
denote the equivalence classes.

\begin{Lem}\label{Lem:act_type}
The Lie algebra acting on $\OCat^1_{\p}$ is determined as follows.
\begin{itemize}
\item Suppose that $y_0\neq -\frac{1}{e}$ in $\mathbf{k}_{\p}$. Then the algebra acting
on $\OCat^1_{\p}$ is $\gl_{\infty}^{\oplus s}$. The module $K_0(\OCat^1_{\p})$
is the exterior tensor product of the Fock spaces of levels $|I^1_{\p}|,\ldots, |I^s_{\p}|$,
where the basis in the $k$th Fock space, $k=1,\ldots,s,$ is indexed by the multipartitions
from $I^k_{\p}$.
\item Suppose that $y_0= -\frac{1}{e}$ in $\mathbf{k}_{\p}$. Then the algebra acting
on $\OCat^1_{\p}$ is $\mathfrak{sl}_{e}^{\oplus s}$. The description of $K_0(\OCat^1_{\p})$
repeats   the previous case.
\end{itemize}
\end{Lem}
\begin{proof}
This is completely analogous to \cite[Section 4.2, Proposition 4.4]{Shan}.
\end{proof}

\begin{Lem}\label{Lem:semisimple}
The following two conditions are equivalent:
\begin{enumerate}
\item The category $\OCat^1_{\p}$ is split semisimple and the quotient functor  $\pi^1_{\p}:\OCat^1_{\p}\twoheadrightarrow \Hecke^{\bs}_{q, \p}\operatorname{-mod}$ is an equivalence.
\item The following elements are nonzero in $\mathbf{k}_{\p}$: $y_0+\frac{1}{e}, (y_i-p_i)-(y_j-p_j)-zy_0$ for $z\in \Z$
and $1\leqslant i<j\leqslant \ell$.
\end{enumerate}
\end{Lem}
\begin{proof}
Condition (1) is equivalent to $\mathbf{k}_{\p}\otimes_R \Hecke^{\bs}_{q,R}\operatorname{-mod}$ being split semisimple.
This is equivalent to (2) by \cite{ArikiKoike}.
\end{proof}

\begin{Prop}\label{Prop:codim_1_faith}
Let $\p$ be a point of codimension $1$. Then the functor $\overline{\pi}_{\p}$ is $(-1)$-faithful.
\end{Prop}
\begin{proof}
By Lemma \ref{Lem:semisimple}, we only need to check the cases when
\begin{itemize}
\item[(i)]  $\p$ is generic
with $y_0=-\frac{1}{e}$.
\item[(ii)] $\p$ is generic with $(y_i-p_i)-(y_j-p_j)=zy_0$.
\end{itemize}

Let us consider (i). By Lemma \ref{Lem:act_type}, we have an $\hat{\sl}^\ell_e$-action on $\OCat^1_{\p}$.
The (-1)-faithfulness of $\pi_\p$ is deduced from a 
direct analog of Proposition \ref{Prop:-1_faith} (for the $\hat{\mathfrak{sl}}_e^\ell$-crystal on
$\Par_1^\ell$) combined with Proposition \ref{Prop:-1_faith}.

Let us proceed to (ii).  By Lemma \ref{Lem:act_type}, here we have $\ell-1$-copies of $\gl_{\infty}$
acting.  In this case each weight space
for the $\ell-2$ copies of $\gl_{\infty}$ with level $1$ actions is a highest weight
$\gl_{\infty}$-categorification
of a level 2 Fock space. Let the multi-charge of that Fock space be $(t_1,t_2)$.
So the standard objects are parameterized by 2 partitions $(\lambda^{(1)},\lambda^{(2)})$.
For each integer $c$ we may have not more than 2 addable/removable boxes with  shifted content
equal to $c$, the box in $\lambda^{(2)}$ is bigger than that in $\lambda^{(1)}$ (i.e., precedes it in the signature).

Now let $\lambda=(\lambda^{(1)},\lambda^{(2)})$ be a singular bi-partition, and $\mu=(\mu^{(1)},\mu^{(2)})$ be a cosingular one, lying in the same block.
The condition that $\lambda$ is singular is equivalent to $\lambda^{(1)}=\varnothing$
and  $\lambda^{(2)}$ has only one removable box, with shifted content equal to $t_2$. Similarly,
$\mu$ is cosingular if and only if $\mu^{(2)}=\varnothing$ and $\mu^{(1)}$ has a single removable box with shifted content
equal to $t_1$. In particular, we see that $\lambda>\mu$. So $(\mathfrak{C}_{\lambda\mu})$
holds (for $w=1$).

From Proposition \ref{Prop:-1_faith}, we deduce that $\overline{\pi}^1_{\p}$ is (-1)-faithful.
\end{proof}

\subsection{Completing the proof of Theorem \ref{Thm:main}}
\begin{proof}[Proof of Theorem \ref{Thm:main}]
The functors $\overline{\pi}^i_R:\OCat^i_R\twoheadrightarrow \Cat_R$ are equivalences
after the base change to $\operatorname{Frac}(R)$ because the functors $\pi^i_R$
are so. We will apply Theorem \ref{Thm:Thm_equi} to the projectives
$\overline{P}^i_R:=\bigoplus_{\lambda\in \mathcal{E}}P_R^i(\lambda)$
and $P^i_R=:\bigoplus_{\lambda\in \mathcal{P}^0_\ell}P_R^i(\lambda)$.
Condition (i) was verified  already in Section \ref{S_cat}.
Condition (ii) follows from Proposition \ref{Prop:pi2_1faith}.
Condition (iii) follows from Proposition \ref{Prop:codim_1_faith}
combined with Proposition \ref{Prop:faith}. Condition (iv)
follows from Proposition \ref{Prop:Cher_proj_emb}. Now Theorem \ref{Thm:Thm_equi}
implies Theorem \ref{Thm:main}.
\end{proof}


\section{Appendix. Another proof of the equivalence for $\ell=1$}\label{S_Cat_Sch}

For $\ell=1$,  the parabolic affine category $\mathcal{O}$ (the Kazhdan-Lusztig category)
is equivalent to the category of modules over Lusztig's form of the quantum group for $\gl_m$.
So the truncation $\OCat^{\g}_{-e}(n)$ is the category of modules over the  $q$-Schur algebra $S_q(m,n)$ for $m\geqslant n$. The categories for different $m\geqslant n$ are naturally identified.
For $m'>m$, the embedding $\bigoplus_{n=0}^m S_q(m,n)\operatorname{-mod}\hookrightarrow
\bigoplus_{n=0}^{m'} S_q(m',n)\operatorname{-mod}$ is compatible with the
restricted $\hat{\mathfrak{sl}}_e$-actions. We set $\mathcal{O}^S(n):=
S_q(n,n)\operatorname{-mod},\mathcal{O}^S:=\bigoplus_{n\geqslant 0}\mathcal{O}^S(n)$.
This is a highest weight categorification of the level one Fock space $\mathcal{F}_e$.
In particular,  $\OCat^\g_{-e}(\leqslant n)$ can be embedded into the genuine $\hat{\sl}_e$-categorification.

The category $\OCat^{S}(n)$ can also be described in a different way:
as the category of right modules over the endomorphism algebra of a certain  $\mathcal{H}_q(n)$-module,
where $\mathcal{H}_q(n)$ denotes the Hecke algebra of type A.
 The module we need is the sum of all indecomposables  $\mathcal{H}_q(n)$-modules that are induced from the trivial module over the product $\mathcal{H}_q(\lambda):=\mathcal{H}_{q}(\lambda_1)\boxtimes\mathcal{H}_q(\lambda_2)\boxtimes\ldots\boxtimes \mathcal{H}_q(\lambda_k)$ for all partitions $\lambda$ of $n$, we denote such modules by $\operatorname{Ind}_\lambda^{\mathcal{H}}(\operatorname{triv})$. Let $\pi^{S}$ denote the quotient functor $\OCat^{S}(n)\twoheadrightarrow \mathcal{H}_q(n)\operatorname{-mod}$. Then we can describe the image of $P(\lambda)$ under $\pi^{S}$: it is the only indecomposable direct summand $P_\lambda$ of $\operatorname{Ind}_\lambda^{\mathcal{H}}(\operatorname{triv})$ that does not appear in $\operatorname{Ind}_\mu^{\mathcal{H}}(\operatorname{triv})$ for any $\mu<\lambda$ in the dominance ordering.
We remark that, by the construction, the functor $\pi^S$ is fully faithful on projectives.

\begin{Prop}
We have an equivalence $\mathcal{O}_\kappa(n)\xrightarrow{\sim}\mathcal{O}^S(n)$ that maps
$\Delta(\lambda)$ to $\Delta^S(\lambda)$.
\end{Prop}
\begin{proof}
Note that both the KZ functor and $\pi^S$ are fully faithful on projectives. So it is enough
to show that the images of $P(\lambda)$ and $P^S(\lambda)$ in $\mathcal{H}_q(n)\operatorname{-mod}$
coincide.

Let $\lambda=(\lambda_1,\ldots,\lambda_k)$ be a partition of $n$. Define an object $I\Delta(\lambda)\in \mathcal{O}_\kappa(n)$
as the image of $\Delta(\lambda_1)\boxtimes\Delta(\lambda_2)\boxtimes\ldots\boxtimes \Delta(\lambda_k)$
(an object in the category $\mathcal{O}$ for $\mathfrak{S}_{\lambda_1}\times\ldots\times \mathfrak{S}_{\lambda_k}$)
under the Bezrukavnikov-Etingof induction functor. Note that $I\Delta(\lambda)$ is projective
and that $KZ(I\Delta(\lambda))=\operatorname{Ind}_\lambda^{\mathcal{H}}(\operatorname{triv})$.
This is because the KZ functors intertwine the inductions and, by
\cite[Corollary 6.10]{GGOR}, $KZ(\Delta(\lambda_i))=\operatorname{triv}_{\lambda_i}$.

Recall that on the level of $K_0$ the Bezrukavnikov-Etingof
induction functor coincides with the induction for groups,
\cite[Propostion 3.14]{BE}.
The multiplicity of  $\Delta(\mu)$ in $I\Delta(\lambda)$
coincides with the number of semi-standard Young tableaux with $\lambda_i$
entries equal to $i$ and shape $\mu$. So $\Delta(\lambda)$ appears
in $I\Delta(\lambda)$ with multiplicity $1$ and if $\Delta(\mu)$
appears in $I\Delta(\lambda)$, then $\lambda\leqslant \mu$. It follows
that $I\Delta(\lambda)=P(\lambda)\oplus \bigoplus_{\mu<\lambda} P(\mu)^{\oplus a_\mu}$.
From here and the description of $\pi^S(P^S(\lambda))$ above, we deduce
that $\operatorname{KZ}(P(\lambda))=\pi^S(P^S(\lambda))$.
\end{proof}

\end{document}